\documentclass{amsart}
\usepackage[T1]{fontenc}
\usepackage[utf8]{inputenc}
\usepackage{lmodern}
\usepackage[a4paper,margin=31mm]{geometry}
\usepackage{amsmath,amssymb,amsfonts,amsthm}
\usepackage{xcolor}
\usepackage{hyperref}
\definecolor{linkblue}{RGB}{36,73,118}
\definecolor{citegreen}{RGB}{28,100,70}
\hypersetup{
  colorlinks=true,
  linkcolor=linkblue,
  citecolor=citegreen,
  urlcolor=linkblue,
  pdftitle={Compact LCK Manifolds and Balanced Threefolds with Constant Holomorphic Sectional Curvature},
  pdfauthor={Junpeng Li}
}
\allowdisplaybreaks[2]
\numberwithin{equation}{section}

\theoremstyle{plain}
\newtheorem{theorem}{Theorem}[section]
\newtheorem{proposition}[theorem]{Proposition}
\newtheorem{lemma}[theorem]{Lemma}
\newtheorem{corollary}[theorem]{Corollary}
\newtheorem{conjecture}{Conjecture}
\theoremstyle{definition}
\newtheorem{definition}[theorem]{Definition}
\theoremstyle{remark}

\newcommand{\C}{\mathbb C}
\newcommand{\R}{\mathbb R}
\newcommand{\Ric}{\operatorname{Ric}}
\newcommand{\trg}{\operatorname{tr}_{g}}
\newcommand{\Boch}{\operatorname{Boch}}
\newcommand{\Deck}{\operatorname{Deck}}
\newcommand{\ddbar}{\sqrt{-1}\,\partial\bar\partial}
\newcommand{\ii}{\sqrt{-1}}
\newcommand{\DivCh}{\operatorname{div}^{1,0}}
\newcommand{\rhoone}{\rho^{(1)}}
\newcommand{\rhotwo}{\rho^{(2)}}
\newcommand{\rhothree}{\rho^{(3)}}
\newcommand{\norm}[1]{\lVert #1\rVert}
\newcommand{\ip}[2]{\left\langle #1,#2\right\rangle}
\newcommand{\dd}{\mathop{}\!\mathrm{d}}
\DeclareMathOperator{\tr}{tr}

\def\XXint#1#2#3{{\setbox0=\hbox{$#1{#2#3}{\int}$ }
\vcenter{\hbox{$#2#3$ }}\kern-.6\wd0}}

\title[Holomorphic sectional curvature on balanced and LCK threefolds]
  {compact balanced threefolds and LCK manifolds with constant holomorphic sectional curvature}
  
\author{Shuwen Chen}
\address{Shuwen Chen. School of Mathematical Sciences, Chongqing Normal University, Chongqing 401331, China}
\email{3153017458@qq.com}

\author{Junpeng Li}
\address{Junpeng Li. School of Mathematical Sciences, Zhejiang Normal University,
Jinhua, Zhejiang 321004, China}
\email{junpengli@zjnu.edu.cn}\thanks{}

\date{}
\subjclass[2010]{53C55 (primary), 53C05 (secondary)}
\keywords{Holomorphic sectional curvature, Gauduchon connection,
two-parameter canonical connection, locally conformally K\"ahler
manifold, balanced threefold, Chern torsion, isosceles Hopf
manifold}

\begin{document}

\begin{abstract}
A long-standing conjecture in Hermitian geometry says that a compact Hermitian manifold with constant Chern holomorphic sectional curvature $c$ is K\"ahler for $c\neq 0$ and Chern flat for $c=0$. Although the conjecture has been established in complex dimension two, it remains open in general in higher dimensions. We verify the conjecture for compact balanced threefolds when $c\leq 0$. For compact locally conformally K\"ahler manifolds, Chen, Chen, and Nie established the case $c\leq 0$, while Huang and Wan recently settled the remaining case. Inspired by the approach of Huang and Wan, we investigate a generalization of the conjecture for canonical metric connections and establish it for connected compact locally conformally K\"ahler manifolds.
\end{abstract}

\maketitle
\enlargethispage{2pt}

\section{Introduction}\label{sec:introduction}

The simplest kind of Riemannian manifolds are complete Riemannian manifolds with constant sectional curvature, known as {\em space forms}. Their universal covers are respectively the sphere $S^n$, the Euclidean space ${\mathbb R}^n$, or the hyperbolic space ${\mathbb H}^n$, equipped with (scaling of) the standard metrics.

In the Hermitian setting of complex dimension \(n\geq2\), it is no
longer possible to require the sectional curvature to be constant
unless the Hermitian metric is flat. Instead, one requires the {\em holomorphic sectional curvature} to be constant. Complete K\"ahler manifolds with constant holomorphic sectional curvature are called {\em complex space forms}. Analogous to the real case, they are quotients of complex projective space ${\mathbb C}{\mathbb P}^n$, the complex Euclidean space ${\mathbb C}^n$, or the complex hyperbolic space ${\mathbb C}{\mathbb H}^n$, equipped with (scaling of) the standard metrics.

Given a Hermitian manifold $(M^n,g)$, there are three canonical connections associated with $g$ that have been extensively studied: the Levi-Civita (Riemannian) connection $\nabla^{LC}$, the Chern connection $\nabla^c$, and the Bismut (also known as Strominger) connection $\nabla^{s}$. The last connection is compatible with both the metric $g$ and the almost complex structure $J$, and it also has totally skew-symmetric torsion. Its existence and uniqueness were proved by Bismut \cite{Bismut}, and it was independently discovered by Strominger \cite{Strominger}, hence, it is also called the Strominger connection in some literature.

We now recall the definition of holomorphic sectional curvature. Given a Hermitian manifold $(M^n,g)$, the holomorphic sectional curvature $H^c$ of $\nabla^c$ is defined by
\[
H^c(X)
=
\frac{R^c(X,\bar X,X,\bar X)}{|X|^4} ,\]
where $X\neq 0$ is any complex tangent vector of type $(1,0)$, and $R^c$ is the curvature tensor of $\nabla^c$. Similarly, let $R^{LC}$ and $R^{s}$ denote the curvature tensors of $\nabla^{LC}$ and $\nabla^{s}$, respectively. Then the corresponding holomorphic sectional curvatures $H^{LC}$ and $H^{s}$ can be similarly defined by replacing $R^c$ in the above formula with $R^{LC}$ or $R^{s}$, respectively. When $g$ is not K\"ahler, its curvature tensor generally does not satisfy all the K\"ahler symmetries, and hence $H^c$ does not determine the entire curvature tensor $R^c$. A natural question is what kinds of complete Hermitian manifolds can admit constant holomorphic sectional curvature when the K\"ahler condition is no longer assumed. A long-standing conjecture in Hermitian geometry is the following:

\begin{conjecture}[{\bf Constant Holomorphic Sectional Curvature Conjecture}] \label{conj1}
Let $(M^n,g)$ be a compact Hermitian manifold with $n\geq 2$. Assume that $H^c=c$ (or $H^{LC}=c$) is a constant. If $c\neq 0$, then $g$ is K\"ahler, and if $c=0$, then $g$ is Chern (or Levi-Civita) flat, namely $R^c=0$ (or $R^{LC}=0$).
\end{conjecture}

Note that the compactness assumption is necessary for the validity of the conjecture, without it there are counterexamples in the noncompact case. Compact Chern flat manifolds have been classified by Boothby \cite{Boothby} as all the compact quotients of complex Lie groups equipped with left invariant metrics. Moreover, compact Levi-Civita flat Hermitian threefolds have been classified by Khan-Yang-Zheng in \cite{KYZ}, but for $n\geq4$, the classification remains an open question.

For $n=2$, Conjecture \ref{conj1} is known to be true by the combined effort of Balas-Gauduchon \cite{Balas, BG} in 1985, Sato-Sekigawa \cite{SS} in 1990, and Apostolov-Davidov-Muskarov \cite{ADM} in 1996.

For $n\geq 3$, the first substantial result concerning this conjecture was obtained by  Davidov-Grantcharov-Muskarov \cite{DGM}, who showed that the only twistor space with constant holomorphic sectional curvature is the complex space form ${\mathbb C}{\mathbb P}^3$. More recently, for the Chern version of the conjecture, Chen-Chen-Nie \cite{ChenChenNie} confirmed the conjecture for locally conformally K\"ahler manifolds with $c\leq 0$. Tang in \cite{Tang} proved the conjecture under the additional assumption that the metric is {\em Chern K\"ahler-like} (namely, $R^c$ obeys all the K\"ahler symmetries). Zhou-Zheng in \cite{ZhouZheng} proved the conjecture under the additional assumption that the manifold is a compact balanced threefold with zero {\em real bisectional curvature}, a notion introduced in \cite{YangZheng} that is slightly stronger than $ H^c $, showing that such a manifold must be Chern flat.  Ma--Nie in \cite{MaNie} verified the conjecture for compact normal balanced threefolds under the additional assumption that the constant
holomorphic sectional curvature satisfies $c\leq 0$. Recently, Wang-Zheng in \cite{WangZheng} verified the conjecture for all compact balanced Bismut torsion-parallel fourfolds. Furthermore, Li-Zheng in \cite{LiZheng} or Rao-Zheng in \cite{RaoZheng} confirmed the conjecture for $ \nabla^c $ under the additional assumption that either $ (M^n, g) $ is a complex nilmanifold, or $g$ is {\em Bismut K\"ahler-like} (namely, $R^{s}$ obeys all K\"ahler symmetries).

We consider balanced manifolds. Recall that a Hermitian manifold $(M^n,J,g)$ is said to be balanced \cite{Michelsohn} if its fundamental form $\omega_g$ satisfies
\[
d\bigl(\omega_g^{n-1}\bigr)=0.
\]
Balanced manifolds form an important class of Hermitian manifolds
and include many non-K\"ahler examples even in complex dimension
three. For instance, this class includes all twistor spaces and many
known examples of non-K\"ahler Calabi--Yau manifolds. The conjecture
remains open for balanced manifolds in arbitrary dimensions. We
consider the case of complex dimension three.

We prove the nonpositive case.

\begin{theorem}\label{thm:balanced-main}
Let \((M^3,g)\) be a compact balanced threefold with
nonpositive constant Chern holomorphic sectional curvature \(c\).
If \(c=0\), then \(g\) is Chern flat.  If \(c<0\), then \(g\) is K\"ahler.
\end{theorem}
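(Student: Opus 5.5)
The argument combines the pointwise algebraic consequences of $H^c=c$ with integral identities that are available because $g$ is balanced; the decisive ingredient is that in complex dimension three the Chern torsion has a very restricted algebraic form. Fix a local unitary frame $\{e_i\}$ and write the Chern torsion as $T(e_i,e_k)=\sum_j T^j_{ik}e_j$. Balancedness is equivalent to the vanishing of the Gauduchon torsion $1$-form, $\eta=\sum_i T^i_{ik}=0$.

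\emph{Step 1: the curvature identity from $H^c=c$.} Constant $H^c$ means the symmetrization of $R^c_{i\bar j k\bar l}$ in $(i,k)$ and in $(j,l)$ equals $\tfrac{c}{2}(\delta_{ij}\delta_{kl}+\delta_{il}\delta_{kj})$. The standard Bianchi identities for the Chern connection express the failure of the Kähler symmetries, $R_{i\bar jk\bar l}-R_{k\bar j i\bar l}$, in terms of $T_{,\bar j}$ and $T\ast\bar T$. Tracing these relations then produces formulas for the first, second and third Chern Ricci curvatures $\rho^{(1)},\rho^{(2)},\rho^{(3)}$ and for the Chern scalar curvatures. Since $\eta=0$, the usual identities give
\[
s_c=\hat s_c+|T|^2 \quad\text{(up to normalization)},
\qquad
\frac{n(n+1)}{2}\,c=\text{a combination of } s_c,\ \hat s_c,\ |T|^2 .
\]
Eliminating yields a pointwise relation of the form $a\,c=|T|^2\cdot(\text{positive const})+\text{divergence terms}$, and for balanced metrics the divergence terms integrate to zero. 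I expect the outcome to be $\int_M |T|^2 = \kappa\, c\,\mathrm{Vol}$ for a constant $\kappa>0$; this has to be checked carefully. When $c<0$ the left side is nonnegative while the right side is negative, a contradiction unless $T=0$, so $g$ is Kähler. This branch should close immediately once the signs are confirmed.

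\emph{Step 2: the case $c=0$, which I expect to be the main obstacle.} Step 1 only gives $\int|T|^2=0$ if the constant $\kappa$ is not degenerate. If that holds, then $T=0$, so $g$ is Kähler with $H=0$, hence flat and in particular Chern flat, and we are done. More likely the integral identity degenerates exactly at $c=0$ in this case; this is why Ma--Nie needed normality. In that situation I would use the three-dimensional structure. In dimension $3$ the torsion $T^j_{ik}$, viewed as a map $\Lambda^2T^{1,0}\cong (T^{1,0})^*\otimes K^{-1}\to T^{1,0}$, is a $3\times3$ matrix, and $\eta=0$ means this matrix is symmetric after the identification. Working at a point, I would diagonalize it by a unitary frame via Takagi factorization, so that $T^1_{23}=\lambda_1$, $T^2_{31}=\lambda_2$, $T^3_{12}=\lambda_3$ and all other components vanish.

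Plugging this normal form into the $H=0$ identities of Step 1 should give a Bochner-type formula for $|T|^2$:
\[
\Delta |T|^2 = |\nabla T|^2 + Q(T) ,
\]
with $Q$ a quartic expression in the $\lambda_i$. The hope is that $Q\ge0$, in which case the maximum principle makes $|T|^2$ constant and $\nabla^c T=0$, which is the normal condition. From there I would follow the Ma--Nie argument: parallel torsion together with $H=0$ forces $R^c$ to be expressible through $T\ast\bar T$, and the remaining Bianchi relations then force $R^c=0$. Proving $Q\ge0$, or replacing the pointwise argument with an integrated version using the divergence terms that vanish under balancedness, is where I expect the real difficulty. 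If $Q$ is not sign-definite, I would fall back on integrating the square of the Bianchi identity $\|R_{i\bar jk\bar l}-R_{k\bar ji\bar l}\|^2$ against the $H=0$ constraint.
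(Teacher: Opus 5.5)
\textbf{There is a genuine gap in each step.}

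\textbf{Step 1.} The identity you expect, $\int_M|T|^2\,\dd V=\kappa\,c\,\operatorname{Vol}(M)$ with $\kappa>0$, is false. A compact quotient of the complex hyperbolic $3$-ball is K\"ahler, hence balanced with $T=0$, and has $H^c\equiv c<0$. For it the left side vanishes while the right side is negative. Your own phrase ``a contradiction unless $T=0$'' already signals this, since $T=0$ does not remove the contradiction.

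Nor can the trace identities you list relate $|T|^2$ to $c$. The mixed Chern--Bianchi identity has no $T\ast\bar T$ term: $R^c_{i\bar jk\bar\ell}-R^c_{k\bar ji\bar\ell}=-T^\ell_{ik,\bar j}$. So for a balanced metric its contraction gives $\rhoone=\rhothree$ and $s^{(1)}=s^{(3)}$ exactly, not $s_c=\hat s_c+|T|^2$. The polarized trace then only says $s^{(1)}\equiv6c$. Torsion enters only through $\rhotwo=\rhoone-\Lambda_\omega\Phi+\mathcal Q$, whose trace $\Lambda_\omega^2\Phi=|T|^2$ holds for every balanced metric and carries no information about $c$.

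A correct identity must be homogeneous quadratic in $T$. The paper's is $\norm{\nabla^{1,0}T}_{L^2}^2+16\norm{\sigma}_{L^2}^2=9c\norm{T}_{L^2}^2$ with $\sigma=\rhoone-2c\omega$. This single identity handles $c<0$ and $c=0$ at once.

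\textbf{Step 2.} This is a hope rather than an argument, and the pointwise sign you need is not available. The zeroth-order term in a Bochner formula for $T$ is not a quartic polynomial in the Takagi invariants $\tau_j$. Commuting $\nabla_{e_s}$ and $\nabla_{\bar e_s}$ brings in $\rhotwo=2c\omega-3\sigma$, which produces $12\ip{\sigma}{\mathcal Q}$. There is also a term $\operatorname{Re}\ip{\DivCh N}{T}$, where $N_{ik\bar j\bar\ell}=T^\ell_{ik,\bar j}-T^j_{ik,\bar\ell}$. Here $\sigma$ is a first-derivative quantity, since $\sum_sT^j_{is,\bar s}=-4\sigma_{i\bar j}$. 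Neither term has a pointwise sign, so the maximum principle gives nothing.

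The paper supplies three missing ideas:
\begin{enumerate}
\item[(a)] In dimension three, $N$ is completely determined by $\sigma$, with $|N|^2=64|\sigma|^2$. This, not the torsion normal form alone, is where $n=3$ is essential.
\item[(b)] One has $\mathcal U=\mathcal R(T)-\DivCh N$ with $\operatorname{Re}\ip{\mathcal R(T)}{T}=c|T|^2$ pointwise. After balanced integration by parts this gives $\norm{\nabla^{0,1}T}_{L^2}^2=32\norm{\sigma}_{L^2}^2-c\norm{T}_{L^2}^2$.
\item[(c)] There is a global orthogonality $\int_M\ip{\rhoone}{\partial^*\partial\omega}\,\dd V=0$, coming from $d\rhoone=0$ and $\partial^*\partial\omega=*\,\ii\partial\bar\partial\omega$. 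It yields $\int_M\ip{\sigma}{\mathcal Q}\,\dd V=c\norm{T}_{L^2}^2-4\norm{\sigma}_{L^2}^2$, so the unsigned term is eliminated only after integration.
\end{enumerate}

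Your endpoint for $c=0$ is right: once $\nabla^cT=0$ (and hence $N=0$), the reconstruction formula $2R^c_{i\bar jk\bar\ell}=c(\delta_{ij}\delta_{k\ell}+\delta_{i\ell}\delta_{kj})-T^\ell_{ik,\bar j}-\overline{T^k_{j\ell,\bar i}}+\tfrac12N_{ik\bar j\bar\ell}$ gives $R^c=0$. What is absent is the route to $\nabla^cT=0$.
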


In the negative case, \(g\) is locally complex hyperbolic up to
scaling.  In the zero case, \(g\) need not be K\"ahler. For example, the standard invariant Hermitian metric on the Iwasawa
manifold is balanced and Chern flat \cite{OtalUgarte}.  The Iwasawa
manifold is not K\"ahler \cite{FuWangWu}.

On a Hermitian manifold $ (M^n,J, g) $, the three canonical connections, namely the Levi-Civita connection $\nabla^{LC}$, the Chern connection $\nabla^c$, and the Bismut connection $\nabla^{s}$, all equal to each other when $ g $ is K\"ahler; and they are mutually distinct when $g$ is not K\"ahler. The one-parameter family of Hermitian connections interpolating between $\nabla^c$ and $\nabla^{s}$, known as the {\em Gauduchon connections}, was introduced by Gauduchon in \cite{Gauduchon}:
\begin{equation}\label{eq:gauduchon-intro}
 \nabla^t
 =\frac{1+t}{2}\nabla^c+\frac{1-t}{2}\nabla^s
 =t\nabla^c+(1-t)\nabla^l,
 \qquad t\in\R,
\end{equation}
In particular, $\nabla^1=\nabla^c$ is the Chern connection, $\nabla^{\!-\!1}=\nabla^s$ is the Bismut connection, while $\nabla^0=\nabla^l=\frac{1}{2}(\nabla^c+\nabla^s)$ is the Hermitian projection of the Levi-Civita connection $\nabla^{LC}$, often called {\em Lichnerowicz connection}. Note that replacing the Chern connection in  Conjecture \ref{conj1} with other connections, such as the Levi-Civita connection, Bismut connection, or Gauduchon connection, gives rise to related conjectures. See \cite{ChenNie, ChenZheng, ChenZ1} and the references therein for further details.

Next, we may consider the two-parameter plane of metric connections spanned by $\nabla^{LC}$, $\nabla^c$ and $\nabla^{s}$ (see for instance \cite{ZhaoZheng}):
\begin{equation}\label{eq:two-parameter-intro}
 \mathcal D_s^t
 :=(1-s)\nabla^t+s\nabla^{\mathrm{LC}},
 \ \ \ \ \ \ \ \ \ \ (t,s) \in \Omega := \{ s\neq 1\} \cup \{ (0,1)\} \subset {\mathbb R}^2.
\end{equation}
In the $ts$-plane ${\mathbb R}^2$, the domain $\Omega$ can be viewed as the cone over the $t$-axis with vertex $(0,1)$, or equivalently, the {\em plane of canonical metric connections} $\mathcal D_s^t$ are the cone over the line of Gauduchon connections with vertex at the Levi-Civita connection. It is well-known that when the metric $g$ is K\"ahler, all $\mathcal D_s^t$ coincide. In contrast, if $g$ is non-K\"ahler, then $\mathcal D^t_s\neq \mathcal D^{t'}_{s'}$ for any two distinct points $(t,s)$, $(t',s') \in \Omega$. Chen--Nie \cite{ChenNie} obtained a characterization of the possible extension of Conjecture \ref{conj1} to the family $\mathcal D_s^t$. They discovered a distinguished subset (which will be called the {\em Chen-Nie curve} following \cite{ChenZ25})
\begin{equation*} \label{eq:Omega0}
\Gamma = \{ (t,s)\in {\mathbb R}^2 \mid (1-t+ts)^2 + s^2 = 4 \} \subset \Omega ,
\end{equation*}
and proposed the following conjecture:

\begin{conjecture}[{\bf Chen-Nie}]  \label{conj2}
Let $(M^n,g)$ be a compact Hermitian manifold with $n\geq 2$.. Suppose the holomorphic sectional curvature of the connection $\mathcal D_s^t$ is a nonzero constant. Then $g$ must be K\"ahler, and hence a complex space form. If the holomorphic sectional curvature of $\mathcal D_s^t$ vanishes and $(t,s)\in \Omega \setminus \Gamma$, then $g$ must be $\mathcal D_s^t$ flat.
\end{conjecture}
A companion question was raised in \cite{ChenZ25} as follows:

\vspace{0.3cm}

\noindent {\bf Question 3.}  {\em For any $(t,s)\in \Gamma$, what kind of compact Hermitian manifolds admit a $\mathcal D_s^t$ connection that is non-flat but has vanishing holomorphic sectional curvature?
}

\vspace{0.3cm}

For $n=2$, Conjecture $\ref{conj2}$ was proved by Chen-Nie in \cite{ChenNie}. For $n\geq 3$, as shown in \cite{ChenZ25}, the first named author and Zheng confirmed the conjecture for complex nilmanifolds with nilpotent $J$ in the sense of \cite{CFGU} and for non-balanced Bismut torsion-parallel manifolds. Recall that a compact Hermitian manifold $(M^n,J,g)$ is said to be {\em Bismut torsion-parallel} (or BTP for brevity) if the torsion $T^b$ of the Bismut connection $\nabla^{s}$ satisfies $\nabla^sT^b=0$. Examples and properties of BTP manifolds are discussed in \cite{ZhaoZ22, ZhaoZ24}. Note that any non-K\"ahler Bismut K\"ahler-like (BKL) manifold is always a non-balanced BTP manifold.

Locally conformally K\"ahler (LCK for short) manifolds form an
important class of Hermitian manifolds. A Hermitian manifold
\((M^n,J,h)\), \(n\geq2\), is called LCK if there exists a real
one-form \(\theta\) such that
\[
 d\omega_h=\theta\wedge\omega_h,
 \qquad
 d\theta=0.
\]
where \(\omega_h(X,Y)=h(JX,Y)\).  The one-form \(\theta\) is called the
Lee form \cite{DragomirOrnea}.  The metric \(h\) is called globally
conformally K\"ahler (GCK for short) if \(\theta\) is exact.  For further details on locally conformally K\"ahler manifolds, we refer the reader to the book by Ornea and Verbitsky \cite{OV} and the references therein. We consider compact LCK manifolds. Our main result is as follows.

\begin{theorem}\label{thm:main-classification}
Let \((M^n,h)\) be a connected compact LCK manifold with $n\geq 2$. Suppose that \(h\) has pointwise constant holomorphic sectional
curvature \(\kappa\) with respect to a Gauduchon connection
\(\nabla^t\). Then either\/ {\rm (i)} \(h\) is K\"ahler, or\/
{\rm (ii)} a holomorphic cover of \((M,h)\) is an isosceles Hopf
manifold with an admissible metric, and in this case \(t=-1\) or
\(t=3\). Moreover, if \(\kappa\) is globally constant \(c\), then in case\/ {\rm (ii)} the admissible metric
is the standard Hopf metric up to scaling and \(c=0\). In particular,
if \(t\notin\{-1,3\}\), then \(h\) is K\"ahler.
\end{theorem}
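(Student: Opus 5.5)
Since this is an LCK statement, I will work in local Kähler gauges. Write $h=e^{f}g$ locally with $g$ Kähler, so that $\theta=df$. The first step is to express the Gauduchon curvature $R^t$ of $h$ through the Kähler curvature $R^g$ and the Lee form. For an LCK metric the Chern torsion is $T_{ij}^k=\tfrac12(\theta_i\delta_{jk}-\theta_j\delta_{ik})$ in unitary frames (up to normalization), and $\nabla^t=\nabla^c+(\text{torsion terms linear in }t)$. Expanding, the $(1,1)$-curvature components $R^t_{i\bar j k\bar l}$ become
\[
e^{f}\bigl(R^g_{i\bar jk\bar l}\bigr)\;+\;(\nabla\theta)\text{-terms}\;+\;\theta\otimes\bar\theta\text{-terms},
\]
with explicit polynomial coefficients in $t$. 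Evaluating at $X=Y=Z=W$ gives
\[
H^t(X)|X|^4=e^{f}R^g(X,\bar X,X,\bar X)+a(t)\,(\nabla\theta)(X,\bar X)|X|^2+b(t)\,|\theta(X)|^2|X|^2+d(t)\,|\theta|^2|X|^4 .
\]
I then polarize the identity $H^t\equiv\kappa$, i.e. symmetrize in $X$. Because the Kähler part has full symmetries, the symmetrized condition determines $R^g$ completely in terms of $\kappa$, $\nabla\theta$ and $\theta\otimes\bar\theta$.

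The second step uses the Kähler symmetries (the second Bianchi identity of $g$) to extract differential constraints. Comparing the full curvature with the symmetrized one yields algebraic conditions. The trace-free part of $\nabla^{1,0}\theta^{\sharp}$ is forced to be a multiple of $\theta\otimes\theta$, and $\partial\bar\partial$ of the Lee form is pinned down. Next I apply the contracted Bianchi identity to $R^g$, which is now of "constant holomorphic sectional curvature type" with correction terms. This gives a system of the form $\nabla\theta=\lambda\,\theta\otimes\bar\theta+\mu\,h+\nu\,\theta\otimes\theta$ with coefficients depending on $t$, $\kappa$ and $|\theta|^2$. Following the approach of Huang and Wan, I integrate this system over compact $M$, using the Gauduchon gauge (I conformally fix $h$ only when allowed; otherwise I integrate directly with Stokes on $d^*\theta$). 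The aim is to show that either $\theta\equiv0$, giving case (i), or $\theta$ is nowhere zero and $\nabla^{LC}\theta=0$ after normalization, i.e. $h$ is Vaisman with parallel Lee form. I expect this integration and sign argument to be the main obstacle. It must produce a quantity like $\int(\text{positive polynomial in }t)\,|\nabla\theta-\ldots|^2$, and the factor should vanish exactly when $t\in\{-1,3\}$. My plan is to find this factor by computing the combination $|\theta|^2\cdot(\text{trace identity})-(\text{divergence})$, tracking which $t$ makes it degenerate.

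The third step handles the non-Kähler case. Parallel Lee form together with the curvature relation says the Kähler metric $g$ on the universal cover has $R^g$ of the exact form of $\C^n\setminus\{0\}$ with metric $|z|^{-2}\cdot(\text{admissible})$. The constraint forces $g$ to be flat, and $f$ to be a function of $|z|$ up to the isosceles deformation. Hence a holomorphic cover is an isosceles Hopf manifold with admissible metric, and the coefficient identities hold only for $t=-1,3$. Finally, if $\kappa\equiv c$ is constant, I plug the admissible ansatz into $H^t$. Constancy forces the admissible profile to be the standard one $|z|^{-2}\sum|dz_i|^2$ up to scaling. A direct computation for the standard Hopf metric at $t=-1$ (Bismut flat) and $t=3$ then gives $c=0$.
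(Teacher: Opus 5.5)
\textbf{There is a genuine gap, and it is in your Step 2.}

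Your Step 1 is fine and is exactly the paper's starting point. On the universal cover, with $\widetilde h=e^{2f}g$ and $g$ K\"ahler, pointwise constancy of $H^t$ is equivalent to
\[
R^g=\frac{\beta_t}{2}G_g+\frac12L_g(A),\qquad A_{i\bar j}=f_{i\bar j}-af_if_{\bar j},\qquad a=(1-t)^2/2 .
\]

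The trouble starts when you compare the full and symmetrized curvatures. Since $L_g(S)$ is an algebraic K\"ahler curvature tensor for every Hermitian $S$, this comparison gives no algebraic condition on $\theta$. At a point, any $2$-jet of $f$ is compatible with the identity. Its pointwise content is only $\Boch(g)=0$ together with $(\Ric^g)^\circ=\frac{n+2}{2}A^\circ$, and the latter ties $\nabla\theta$ to the unknown Ricci curvature of $g$.

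The second Bianchi identity does not close this up into a system $\nabla\theta=\lambda\,\theta\otimes\bar\theta+\mu h+\nu\,\theta\otimes\theta$. In fact no such system can hold, because it fails for the metrics of case (ii). For an admissible metric, $e^{-2f}=\xi=\lambda Q_{\mathsf A}$ gives $f_{ij}=2f_if_j-\lambda\mathsf A_{ij}/\xi$. The $\mathsf A_{ij}$ term is not a multiple of $f_if_j$ when $\mathsf A\neq0$.

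For the same reason your intermediate target is false. An admissible metric with $\mathsf A\neq0$ has pointwise constant $H^t$ for $t=-1,3$. Yet it equals the standard Hopf metric, which is Vaisman, times the nonconstant factor $c_0|z|^2/Q_{\mathsf A}$. Vaisman metrics are Gauduchon, and a Gauduchon metric is unique up to homothety in its conformal class on a compact manifold, so this metric is not Vaisman. Hence no integration argument can yield ``$\theta\equiv0$ or $\nabla^{LC}\theta=0$''. The positive polynomial in $t$ vanishing exactly at $\{-1,3\}$ that you hope to find has no reason to exist.

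\textbf{What is missing is global rigidity for Bochner-flat metrics,} which the paper uses in both cases.

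If $\theta$ is exact, $g_M=e^{-2f}h$ is a compact Bochner--K\"ahler metric on $M$, hence locally symmetric by Bryant's classification. Then $\nabla A^\circ=0$. Applying the divergence theorem to $P^{i\bar j}\xi_{\bar j}e_i$, with $P=A^\circ$ and $\xi=e^{-af}$ (or $f$ when $t=1$), gives $A^\circ=0$. Then $u=\xi$ (or $f$) satisfies $\ddbar u=F\omega_{g_M}$, which forces $u$, hence $f$, to be constant.

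If $\theta$ is not exact, Fried's and Kamishima's results identify $(\widetilde M,g)$ with $(\C^n\setminus\{0\},g_0)$, with deck transformations acting by $z\mapsto r_\gamma U_\gamma z$. This is the source of the flatness you assert without argument in Step 3. Then $R^g=0$ and injectivity of $L_g$ give $A=-\frac{\beta_t}{2}g_0$, so $\xi=e^{-af}$ satisfies $\xi_{i\bar j}=\lambda\delta_{ij}$ with $\lambda$ constant.

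The exceptional parameters come from homogeneity, not from a sign argument. The relation $\xi(rUz)=r^a\xi(z)$ gives $(r^2-r^a)\lambda=0$. A Liouville argument excludes $\lambda=0$, so $a=2$, i.e.\ $t\in\{-1,3\}$; the case $t=1$ is excluded separately. Hartogs' theorem and homogeneity then give $\xi=\lambda Q_{\mathsf A}$, which is the admissible form.

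Once this is in place, your plan for the globally constant case is fine. The paper forces $\mathsf A=0$ and $c=0$ by comparing coefficients of $e^{\pm2\sqrt{-1}\vartheta}$ and $1$ after substituting $z\mapsto e^{\sqrt{-1}\vartheta}z$.
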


Isosceles Hopf manifolds and admissible metrics are defined in
Definition~\ref{def:admissible}.  Conversely, every admissible metric on
an isosceles Hopf manifold has pointwise constant Gauduchon holomorphic
sectional curvature for \(t=-1\) and \(t=3\) \cite{ChenNie}.

\begin{theorem}\label{thm:two-parameter-classification}
Let \((M^n,h)\) be a connected compact LCK manifold with $n\geq 2$. Suppose that \(h\) has pointwise constant holomorphic sectional curvature \(\kappa\) with respect to \(\mathcal D_s^t\), where \((t,s)\in\Omega\). Then either\/ {\rm (i)} \(h\) is K\"ahler, or\/
{\rm (ii)} a holomorphic cover of \((M,h)\) is an isosceles Hopf manifold with an admissible metric, and in this case $(t,s)\in \Gamma$ on the Chen-Nie curve. Moreover, if \(\kappa\) is globally constant \(c\), then in case\/ {\rm (ii)} the admissible metric is the standard Hopf metric up to scaling and \(c=0\).
\end{theorem}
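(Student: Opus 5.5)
Before touching the curvature we put the LCK metric in a normal form. Since \(h\) is LCK with \(d\theta=0\), its Chern torsion is determined by the Lee form: \(T^j_{ik}=\tfrac{1}{n-1}(\eta_i\delta_{jk}-\eta_k\delta_{ij})\) up to normalization, where \(\eta=\theta^{1,0}\). All connections \(\mathcal D_s^t\) differ from \(\nabla^c\) by tensors that are linear in \(T\). We therefore write \(\mathcal D_s^t=\nabla^c+\Phi_{t,s}(T)\) and expand \(R^{\mathcal D^t_s}(X,\bar X,X,\bar X)\) as
\[
R^c(X,\bar X,X,\bar X)+(\text{terms in }\nabla^c\eta)+(\text{quadratic terms in }\eta).
\]
The coefficients are explicit polynomials in \((t,s)\); this is the Chen--Nie computation \cite{ChenNie}, specialized to LCK torsion. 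Then we polarize the condition \(H=\kappa|X|^4\) and contract it with \(h\) in the two independent ways. This produces two trace identities relating the Chern scalar curvatures \(s_1,s_2\) (for LCK, \(s_1-s_2\) is a multiple of \(\operatorname{div}\eta\) plus \(|\eta|^2\)), the divergence of \(\eta\), the tensor \(\nabla^c\eta\), and \(|\eta|^2\).

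The next step is to pass to the Kähler cover and work with the conformal Kähler metric \(g=e^{-f}h\) on \(\tilde M\), where \(\theta=df\). Under this change the Chern curvature transforms by \(R^c_h=R_g+\partial\bar\partial f\otimes h\), and the pointwise constant-\(H\) condition becomes an identity of the form
\[
R_g(X,\bar X,X,\bar X)=\kappa e^{f}|X|_g^4+A\,f_{X\bar X}|X|^2_g+B\,|Xf|^2|X|_g^2+C\,\mathrm{Re}\bigl((Xf)^2\cdots\bigr),
\]
with \(A,B,C\) depending on \((t,s)\). Because \(R_g\) has Kähler symmetries, the full tensor \(R_g\) is recovered by polarization. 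The symmetric-in-four-slots part forces the \((2,0)\) Hessian \(\nabla^g\partial f\) to be a specific multiple of \(\partial f\otimes\partial f\). In other words, \(e^{\lambda f}\) has holomorphic gradient for a suitable \(\lambda\), or \(\nabla\partial f\) vanishes. The remaining condition says that \(g\) has curvature of the form ``constant-type plus rank-one in \(\partial f\)''.

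We split according to whether \(\theta\equiv0\) or not. On a compact connected manifold, if \(\eta\) vanishes on an open set, unique continuation for the elliptic system satisfied by \(f\) should propagate this, and we obtain case (i). Otherwise we follow Huang--Wan. A holomorphic gradient field on the Kähler cover that is homothetic to \(g\) (the deck group acts by homotheties since \(M\) is non-GCK) shows that the Kähler cover is a cone, i.e.\ the metric is Vaisman-type with \(\nabla\theta\) controlled. The curvature equation then makes the transversal Kähler geometry have constant holomorphic sectional curvature, and compactness forces positive curvature, i.e.\ \(\mathbb{CP}^{n-1}\)-transversal. This identifies a holomorphic cover with an isosceles Hopf manifold carrying an admissible metric as in Definition~\ref{def:admissible}. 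Substituting the Hopf structure back into the coefficient identities, the extra terms cancel only when the coefficient relation is \((1-t+ts)^2+s^2=4\), so \((t,s)\in\Gamma\); on the line \(s=0\) this recovers \(t\in\{-1,3\}\), consistent with Theorem~\ref{thm:main-classification}. If \(\kappa\) is globally constant, the admissible metric's \(H\) equals a function varying along the fibres unless the metric is standard, and then \(c=0\) by direct evaluation on the standard Hopf metric.

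The main obstacle is the middle step. We must extract, from a single quartic identity in \(X\), both the rigidity of \(\nabla\partial f\) and the cone structure, uniformly in \((t,s)\). Degenerate values of \((t,s)\) can kill the coefficient used to force the Hessian condition. Here we would try first to use the second trace identity together with integration over \(M\) against the Gauduchon volume, to get \(\int|\nabla^{2,0}f-\mu\,\partial f\otimes\partial f|^2=0\), and treat the vanishing-coefficient cases separately by hand.
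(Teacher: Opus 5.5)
Passing to the K\"ahler cover and recovering $R^g$ by polarization is the right start, but the step you build on next does not exist. The quantity $R(X,\bar X,X,\bar X)$ is invariant under $X\mapsto e^{\ii\vartheta}X$. In the conformal formula (Chen--Nie's (4.10), with $\chi(t,s)=(1-t+ts)^2+s^2$ in place of $(1-t)^2$) only $f_{i\bar j}$, $f_if_{\bar j}$ and $|\partial f|_g^2$ occur. So there is no term $\mathrm{Re}((Xf)^2\cdots)$ and no constraint at all on the $(2,0)$-Hessian. The polarized identity is $R^g=\frac{\beta_{t,s}}2G_g+\frac12L_g(\mathcal A)$ with $\mathcal A_{i\bar j}=f_{i\bar j}-a_{t,s}f_if_{\bar j}$ and $a_{t,s}=\chi(t,s)/2$.

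The conclusions you draw from the nonexistent constraint are in fact false. For $(t,s)\in\Gamma$, every admissible metric $c_0g_0/Q_{\mathsf A}$ has pointwise constant $\mathcal D_s^t$-holomorphic sectional curvature. Yet for $\mathsf A\neq0$ it is conformal but not homothetic to the Gauduchon metric $h_{\mathrm H}$, hence not Vaisman (Vaisman metrics are Gauduchon, and the Gauduchon metric in a conformal class is unique up to scaling). Also, no nontrivial power of $\xi=e^{-2f}=\lambda Q_{\mathsf A}$ has holomorphic gradient. A single homothetic deck transformation does not give a homothetic vector field either. Your route would therefore force the standard Hopf metric already in the pointwise case, contradicting the statement.

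The missing idea is that the right-hand side of the polarized identity lies in $L_g(\mathcal H_0)\oplus\R G_g$. Hence $g$ is Bochner--K\"ahler and $h$ is Tricerri--Vanhecke Bochner-flat. In the strictly LCK case, Fried and Kamishima then give $(\widetilde M,g)\cong(\C^n\setminus\{0\},g_0)$ with deck transformations $z\mapsto rUz$. Since $R^{g_0}=0$, injectivity of $L_{g_0}$ yields $\mathcal A=-\frac{\beta_{t,s}}2g_0$. Equivalently, $\xi=e^{-a_{t,s}f}$ satisfies $\xi_{i\bar j}=\lambda\delta_{ij}$ with $\lambda$ constant, and $\lambda\neq0$ by a Hartogs--Liouville argument. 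Comparing complex Hessians in $\xi(rUz)=r^{a_{t,s}}\xi(z)$ gives $(r^2-r^{a_{t,s}})\lambda=0$, so $a_{t,s}=2$, which is exactly $(t,s)\in\Gamma$. That scaling mismatch, not a back-substitution into coefficient identities, is where $\Gamma$ comes from. Solving the Hessian equation with the automorphy then gives $\xi=\lambda Q_{\mathsf A}$ and admissibility. The Chern point $(1,0)$, where $a_{t,s}=0$, is excluded by the analogous pluriharmonic argument applied to $f$.

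Your case split is also incomplete. The relevant dichotomy is whether $\theta$ is exact, not whether $\theta\equiv0$. In your second branch you invoke ``$M$ is non-GCK'', so a GCK metric with $\theta=2\,df\neq0$ is never treated, and unique continuation is irrelevant to it. This case is where compactness of $M$ itself is used, and the paper argues as follows. The metric $g_M=e^{-2f}h$ is a compact Bochner--K\"ahler metric, hence locally symmetric by Bryant. So $P:=\mathcal A^\circ=\frac{2}{n+2}(\Ric^{g_M})^\circ$ is parallel. Integrating $\nabla_i(P^{i\bar j}\xi_{\bar j})$ over $M$ with $\xi=e^{-a_{t,s}f}$ (or with $f$ in place of $\xi$ when $a_{t,s}=0$) forces $P=0$. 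Then $\ddbar u=F\omega_{g_M}$ for $u=\xi$ (or $f$), with $F$ constant by the Lefschetz argument and $F=0$ by integrating the trace. Hence $f$ is constant and $h$ is K\"ahler.
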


Note that, for the globally constant part of Theorem~\ref{thm:two-parameter-classification},
the case of the Chern connection with \(c\leq0\) was previously proved
by Chen, Chen, and Nie in \cite{ChenChenNie}, while the remaining case \(c>0\) was recently settled by Huang and Wan in \cite{HuangWan}. Moreover, the cases of Levi--Civita and Bismut connections were obtained by Chen and Zheng \cite{ChenZhengLCK}.  Our proof of Theorem~\ref{thm:two-parameter-classification} uses the techniques in \cite{HuangWan,ChenZhengLCK}.  We also use the formulas in \cite{ChenNie}.

In Section~\ref{sec:preliminaries}, we introduce the notation and
recall the basic facts used in this paper.
Sections~\ref{sec:curvature}--\ref{sec:strict} contain the proof of
Theorem~\ref{thm:main-classification}.
Section~\ref{sec:two-parameter} contains the proof of
Theorem~\ref{thm:two-parameter-classification}.
Sections~\ref{sec:balanced-curvature} and
\ref{sec:balanced-torsion} contain the proof of
Theorem~\ref{thm:balanced-main}.

\section{Preliminaries}\label{sec:preliminaries}

In this section, we fix the notation and recall the basic facts used below.

\subsection{Curvature, torsion, and holomorphic sectional curvature}

Let \((M^n,J,h)\) be a Hermitian manifold.  The connections
\(\nabla^t\) and \(\mathcal D_s^t\) in
\eqref{eq:gauduchon-intro}--\eqref{eq:two-parameter-intro} are metric
connections on \(TM\).  The Chern torsion is
\[
 T(X,Y):=\nabla_X^cY-\nabla_Y^cX-[X,Y].
\]
Thus \(T\) always denotes the Chern torsion.  For a metric connection
\(\nabla\), we use the curvature convention in \cite{ChenNie}:
\begin{align*}
 R^\nabla(X,Y)Z
 &=\nabla_X\nabla_YZ-\nabla_Y\nabla_XZ-\nabla_{[X,Y]}Z,\\
 R^\nabla(X,Y,Z,W)
 &=h\bigl(R^\nabla(X,Y)Z,W\bigr).
\end{align*}
For a Hermitian metric \(h\), \(R^{LC}\) denotes its Levi--Civita
curvature.  We also write
\[
 R^c:=R^{\nabla^c},\qquad
 R^t:=R^{\nabla^t},\qquad
 R^D:=R^{\mathcal D_s^t}.
\]
Let \(\{e_i\}_{i=1}^n\) be a local \(h\)-unitary frame of
\(T^{1,0}M\).  Repeated indices are summed, and
\[
 R^\nabla_{i\bar j k\bar\ell}
 :=R^\nabla(e_i,\bar e_j,e_k,\bar e_\ell),
 \qquad
 T(e_i,e_k)=T^j_{ik}e_j,
 \qquad
 T(e_i,\bar e_j)=0,
\]
where \(T^j_{ik}=-T^j_{ki}\).

Let \(\Sigma\subset T_xM\) be the \(J\)-invariant plane spanned by
\(\{X,JX\}\), where \(X\neq0\), and put
\(v=\frac12(X-\ii JX)\).  The holomorphic sectional curvature of
\(\nabla\) is
\begin{equation}\label{eq:hsc-intro}
 H_h^\nabla(\Sigma)=H_h^\nabla(v)
 :=\frac{R^\nabla(X,JX,JX,X)}{|X|_h^4}
 =\frac{R^\nabla(v,\bar v,v,\bar v)}{|v|_h^4},
 \qquad |v|_h^2=h(v,\bar v).
\end{equation}
We say that \(H_h^\nabla\) is \emph{pointwise constant} if there is a
smooth function \(\kappa:M\to\R\) such that
\begin{equation}\label{eq:pointwise-intro}
 H_h^\nabla(v)=\kappa(x)
 \qquad\text{for every }0\neq v\in T_x^{1,0}M.
\end{equation}

We define the symmetrization \(\widehat R^\nabla\) of the curvature tensor
\(R^\nabla\) by
\begin{equation}\label{eq:symmetrization}
\widehat R^\nabla_{i\bar j k\bar\ell}
=
\frac14\left(
R^\nabla_{i\bar j k\bar\ell}
+R^\nabla_{k\bar j i\bar\ell}
+R^\nabla_{i\bar\ell k\bar j}
+R^\nabla_{k\bar\ell i\bar j}
\right).
\end{equation}
We use the superscripts \(c,t,D\) in the same way for \(H_h\) and
\(\widehat R\).

By polarization \cite{Balas,ChenNie}, pointwise constancy is equivalent
to
\begin{equation}\label{eq:pointwise-hsc-h}
 \widehat R^\nabla_{i\bar j k\bar\ell}
 =
 \frac{\kappa}{2}
 \left(
 \delta_{ij}\delta_{k\ell}+\delta_{i\ell}\delta_{kj}
 \right)
\end{equation}
in every \(h\)-unitary frame.  When \(\kappa\equiv c\in\R\), the
holomorphic sectional curvature is globally constant.

On a Kähler manifold with metric \(g\), let
\(\{e_i\}_{i=1}^n\) be a local \(g\)-unitary frame of \(T^{1,0}M\).
We write
\[
 f_i=e_i(f),\qquad
 f_{\bar i}=\bar e_i(f),\qquad
 f_{i\bar j}=(\nabla^gdf)(e_i,\bar e_j),
\]
and
\[
 |\partial f|_g^2=\sum_i f_i f_{\bar i},
 \qquad
 \Delta_gf=\sum_i f_{i\bar i},
\]
where
\[
 (\nabla^gdf)(X,Y)=X\bigl(Y(f)\bigr)-df(\nabla^g_XY).
\]

\subsection{Locally conformally K\"ahler metrics and the universal cover}
\label{subsec:cover}

\begin{definition}\label{def:lck}
Let \((M^n,J,h)\), \(n\geq2\), be a Hermitian manifold, and let
\(\omega_h(X,Y)=h(JX,Y)\) be its fundamental form.  The metric \(h\) is
called \emph{locally conformally K\"ahler} (LCK for short) if there is
a closed real one-form \(\theta\) such that
\begin{equation}\label{eq:lck-definition}
 d\omega_h=\theta\wedge\omega_h,
 \qquad d\theta=0.
\end{equation}
The one-form \(\theta\), which is unique for \(n\geq2\), is called the
\emph{Lee form}.  An LCK metric is called \emph{globally conformally
K\"ahler} (GCK for short) if \(\theta\) is exact.  It is called
\emph{strictly LCK} if \(\theta\) is not exact, or equivalently if
\([\theta]\neq0\) in \(H^1(M,\R)\).
\end{definition}

Indeed, the Poincar\'e lemma gives \(\theta=du\) locally, and
\begin{equation}\label{eq:local-kahler-rescaling}
 d(e^{-u}\omega_h)
 =e^{-u}\bigl(d\omega_h-du\wedge\omega_h\bigr)=0.
\end{equation}
Thus \(e^{-u}h\) is K\"ahler locally.  The local functions \(u\) can be
chosen as restrictions of one \(u\in C^\infty(M,\R)\) if and only if
\(\theta\) is exact.  In this case
\eqref{eq:local-kahler-rescaling} holds on \(M\), so \(e^{-u}h\) is
K\"ahler and \(h\) is GCK.  Finally, \(h\) itself is K\"ahler if and
only if \(\theta=0\); an exact nonzero Lee form gives a non-K\"ahler GCK
metric.

Assume from now on that \((M,J,h)\) is a connected LCK manifold, and let
\[
    \pi:\widetilde M\longrightarrow M
\]
be the universal covering.  Set
\[
    \widetilde\theta:=\pi^*\theta,
    \qquad
    \widetilde\omega:=\pi^*\omega_h.
\]
Since pullback commutes with the exterior derivative, the LCK identities
\eqref{eq:lck-definition} give
\[
    d\widetilde\theta=0,
    \qquad
    d\widetilde\omega
    =\widetilde\theta\wedge\widetilde\omega.
\]
Fixing $\widetilde x_0\in\widetilde M$, define
\[
f(\widetilde x)
:=
\frac12\int_{\widetilde x_0}^{\widetilde x}\widetilde\theta.
\]
Since $\widetilde\theta$ is closed and $\widetilde M$ is simply
connected, $\widetilde\theta$ is exact. Hence the integral is
independent of the chosen path, and $f$ is a globally defined smooth
function satisfying
\begin{equation}\label{eq:Lee-cover-function}
    \pi^*\theta=\widetilde\theta=2\,df.
\end{equation}
Now set
\[
    g:=e^{-2f}\pi^*h,
    \qquad
    \omega_g:=e^{-2f}\widetilde\omega.
\]
Then
\[
 d\omega_g
 =e^{-2f}\bigl(d\widetilde\omega-2\,df\wedge\widetilde\omega\bigr)
 =e^{-2f}\bigl(\widetilde\theta-2\,df\bigr)\wedge\widetilde\omega
 =0.
\]
Hence \(g\) is K\"ahler.  Consequently, the pullback metric
\(\widetilde h:=\pi^*h\) is globally conformally K\"ahler on
\(\widetilde M\), since
\begin{equation}\label{eq:universal-conformal}
 \widetilde h=\pi^*h=e^{2f}g.
\end{equation}

Let \(\gamma\) be a deck transformation.  Since
\(\pi\circ\gamma=\pi\), equation
\eqref{eq:Lee-cover-function} gives
\[
 d(f\circ\gamma-f)
 =\gamma^*df-df
 =\frac12\left((\pi\circ\gamma)^*\theta-\pi^*\theta\right)
 =0.
\]
Thus \(f\circ\gamma-f\) is constant.  Define the positive constant
\[
 r_\gamma:=e^{\,f-f\circ\gamma}.
\]
Applying \(\gamma^*\) to \eqref{eq:universal-conformal}, we obtain
\[
 e^{2f}g
 =\pi^*h
 =(\pi\circ\gamma)^*h
 =\gamma^*(\pi^*h)
 =\gamma^*(e^{2f}g)
 =e^{2f\circ\gamma}\gamma^*g.
\]
Therefore
\begin{equation}\label{eq:deck-homothety}
 \gamma^*g
 =e^{2(f-f\circ\gamma)}g
 =r_\gamma^2g,
 \qquad r_\gamma>0.
\end{equation}
By the definition of \(r_\gamma\),
\begin{equation}\label{eq:f-automorphy}
 f\circ\gamma=f-\log r_\gamma.
\end{equation}
For any two deck transformations \(\gamma_1\) and \(\gamma_2\),
\[
 f\circ\gamma_1\circ\gamma_2
 =f-\log r_{\gamma_1}-\log r_{\gamma_2}.
\]
Hence
\[
 r_{\gamma_1\circ\gamma_2}
 =r_{\gamma_1}r_{\gamma_2},
\]
so \(\gamma\mapsto r_\gamma\) is a multiplicative character.

If \(\theta=du\) on \(M\), then
\eqref{eq:Lee-cover-function} gives
\(d(2f-\pi^*u)=0\), so \(f\) is deck invariant and every
\(r_\gamma=1\).  Conversely, if every \(r_\gamma=1\), then \(f\)
descends to a function \(\bar f\) on \(M\), and
\eqref{eq:Lee-cover-function} gives \(\theta=2\,d\bar f\).  Hence the Lee
form is exact if and only if the homothety character is trivial.
Thus, when the Lee form is not exact, some deck transformation has
\(r_\gamma\neq1\).  Replacing it by its inverse when necessary, we may
choose one satisfying
\begin{equation}\label{eq:strict-contraction}
    0<r_\gamma<1.
\end{equation}

If \(H_h^t\) is pointwise constant with curvature function \(\kappa\),
set \(\widetilde\kappa:=\pi^*\kappa\).  The pullback of \(\nabla^t\) is
the \(t\)-Gauduchon connection of \(\widetilde h\), and its holomorphic
sectional curvature is pointwise constant with function
\(\widetilde\kappa\).  The analogous statement holds for
\(\mathcal D_s^t\), since pullback commutes with the affine construction
\eqref{eq:two-parameter-intro}.

\subsection{Isosceles Hopf manifolds}
Let $D=\operatorname{diag}(d_1,\dots,d_n)$, where
$|d_1|=\cdots=|d_n|=r<1$, and set
$M_D=(\C^n\setminus\{0\})/\langle z\mapsto Dz\rangle$.
Such a manifold is called an \emph{isosceles Hopf manifold}. Let
$q_D:\C^n\setminus\{0\}\longrightarrow M_D$ be the quotient map, and let
$g_0$ be the standard Euclidean Hermitian metric on $\C^n$.

\begin{definition}\label{def:admissible}
A Hermitian metric \(h\) on \(M_D\) is called \emph{admissible} if
there exist a constant \(c_0>0\) and a complex matrix \(\mathsf A\)
such that
\begin{equation}\label{eq:admissible-statement}
 q_D^*h
 =\frac{c_0}{Q_{\mathsf A}(z)}g_0,
 \qquad
 Q_{\mathsf A}(z)
 =|z|^2+z^{\mathsf T}\mathsf A z
  +\overline{z^{\mathsf T}\mathsf A z},
\end{equation}
where
\begin{equation}\label{eq:admissible-conditions-statement}
 \mathsf A^{\mathsf T}=\mathsf A,
 \qquad
 \mathsf A\overline{\mathsf A}<\frac14I,
 \qquad
 D^{\mathsf T}\mathsf A D=r^2\mathsf A.
\end{equation}
\end{definition}

The middle inequality means that
\(\frac14I-\mathsf A\overline{\mathsf A}\) is positive definite, and
it guarantees that \(Q_{\mathsf A}>0\).  The last condition gives
\[
 Q_{\mathsf A}(Dz)=r^2Q_{\mathsf A}(z).
\]
Since \(D^*g_0=r^2g_0\), we have
\[
 D^*\left(\frac{c_0}{Q_{\mathsf A}}g_0\right)
 =
 \frac{c_0}{Q_{\mathsf A}\circ D}D^*g_0
 =
 \frac{c_0}{r^2Q_{\mathsf A}}\,r^2g_0
 =
 \frac{c_0}{Q_{\mathsf A}}g_0.
\]
Thus \(\frac{c_0}{Q_{\mathsf A}}g_0\) is invariant under
\(\langle D\rangle\) and defines a Hermitian metric on \(M_D\).
See \cite{ChenNie}.

The standard Hopf metric \(h_{\mathrm H}\) on \(M_D\) is defined by
\[
 q_D^*h_{\mathrm H}=\frac{g_0}{|z|^2}.
\]
Since \(|z|^2/Q_{\mathsf A}(z)\) is \(D\)-invariant, every admissible
metric is globally conformal to \(h_{\mathrm H}\).

\section{The conformal curvature identity and Bochner-flatness}
\label{sec:curvature}

\subsection{The conformal curvature formula}

We now specialize to the Gauduchon family.  Assume that \(H_h^t\) is
pointwise constant with curvature function \(\kappa\), and retain the
notation of the preceding section.

Let \(\{e_1,\dots,e_n\}\) be a local \(g\)-unitary frame and put
\(\widetilde e_i=e^{-f}e_i\), so that \(\{\widetilde e_i\}\) is
\(\widetilde h\)-unitary.  Let \(R^g\) denote the K\"ahler curvature of
\(g\), and let \(\widetilde R^t\) denote the curvature of the
\(t\)-Gauduchon connection of \(\widetilde h\).  Since \(g\) is
K\"ahler, formula (4.10) in \cite{ChenNie} becomes
\begin{equation}\label{eq:CN410}
\begin{aligned}
e^{2f}\widehat{\widetilde R}^{\,t}_{k\bar l i\bar j}
-R^g_{k\bar l i\bar j}
={}&
-\frac12\left(
 f_{i\bar l}\delta_{jk}
+f_{k\bar l}\delta_{ij}
+f_{i\bar j}\delta_{kl}
+f_{k\bar j}\delta_{il}
\right)
\\
&+\frac{(1-t)^2}{4}\left(
 f_i f_{\bar j}\delta_{kl}
+f_k f_{\bar j}\delta_{il}
+f_i f_{\bar l}\delta_{kj}
+f_k f_{\bar l}\delta_{ij}
\right)
\\
&-\frac{(1-t)^2}{2}|\partial f|_g^2
\left(
 \delta_{kj}\delta_{il}
+\delta_{ij}\delta_{kl}
\right).
\end{aligned}
\end{equation}
Here all derivatives of \(f\) are taken with respect to the K\"ahler
connection of \(g\).

For any Hermitian metric \(k\), define
\begin{equation}\label{eq:G-def}
    (G_k)_{p\bar q i\bar j}
    =k_{i\bar j}k_{p\bar q}+k_{i\bar q}k_{p\bar j},
\end{equation}
and, for a Hermitian \((1,1)\)-tensor \(S\),
\begin{equation}\label{eq:L-def}
 (L_k(S))_{p\bar q i\bar j}
 =S_{i\bar q}k_{p\bar j}+S_{p\bar q}k_{i\bar j}
  +S_{i\bar j}k_{p\bar q}+S_{p\bar j}k_{i\bar q}.
\end{equation}
Set
\begin{equation}\label{eq:a-def}
    a:=\frac{(1-t)^2}{2}\geq0,
\end{equation}
and set
\begin{equation}\label{eq:tensor-A-def}
    A_{i\bar j}:=f_{i\bar j}-a f_i f_{\bar j}.
\end{equation}
Finally, define
\begin{equation}\label{eq:beta-t-def}
    \beta_t
    :=
    e^{2f}\widetilde\kappa
    +(1-t)^2|\partial f|_g^2
    =e^{2f}\widetilde\kappa+2a|\partial f|_g^2.
\end{equation}
By pointwise constancy,
\[
    \widehat{\widetilde R}^{\,t}_{k\bar l i\bar j}
    =
    \frac{\widetilde\kappa}{2}
    \left(\delta_{ij}\delta_{kl}+\delta_{il}\delta_{kj}\right).
\]
Substitution into \eqref{eq:CN410} gives
\begin{equation}\label{eq:curvature-identity}
    R^g=\frac{\beta_t}{2}G_g+\frac12L_g(A).
\end{equation}
Indeed, the terms on the right-hand side of \eqref{eq:CN410} can be rewritten as
\begin{align*}
&-\frac12L_g\bigl((f_{i\bar j})\bigr)
 +\frac{(1-t)^2}{4}L_g\bigl((f_i f_{\bar j})\bigr)
 -\frac{(1-t)^2}{2}|\partial f|_g^2G_g
=-\frac12L_g(A)-a|\partial f|_g^2G_g.
\end{align*}

\subsection{Injectivity of \texorpdfstring{\(L_g\)}{Lg}}

At a fixed point \(x\in\widetilde M\), let \(V=T_x\widetilde M\), and set
\[
 \mathcal H
 :=
 \left\{S\in\operatorname{Sym}^2V^*:
 S(JX,JY)=S(X,Y)\right\}.
\]
Thus \(\mathcal H\) is the real vector space of Hermitian symmetric
two-tensors.  Let \(\mathcal K\subset\otimes^4V^*\) be the real vector
space of tensors \(R\) satisfying
\[
\begin{aligned}
 \text{\rm(i)}\quad&
 R(X,Y,Z,W)=-R(Y,X,Z,W)=-R(X,Y,W,Z),\\
 \text{\rm(ii)}\quad&
 R(X,Y,Z,W)=R(Z,W,X,Y),\\
 \text{\rm(iii)}\quad&
 R(X,Y,Z,W)+R(Y,Z,X,W)+R(Z,X,Y,W)=0,\\
 \text{\rm(iv)}\quad&
 R(JX,JY,Z,W)=R(X,Y,Z,W)=R(X,Y,JZ,JW)
\end{aligned}
\]
for all \(X,Y,Z,W\in V\).  These are the usual algebraic curvature
symmetries, the first Bianchi identity, and the K\"ahler
\(J\)-invariance.  We call \(\mathcal K\) the space of algebraic
K\"ahler curvature tensors.  Since \(g\) is K\"ahler,
\(R^g(x)\in\mathcal K\).  Formula \eqref{eq:L-def} also gives
\(L_g(S)\in\mathcal K\) for every \(S\in\mathcal H\).

The following injectivity lemma is due to Huang and Wan
\cite{HuangWan}.

\begin{lemma}\label{lem:L-injective}
For \(n\geq2\), the linear map
\[
 L_g:\mathcal H\longrightarrow\mathcal K
\]
is injective.
\end{lemma}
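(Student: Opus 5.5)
Since the map is linear, injectivity amounts to showing that \(L_g(S)=0\) forces \(S=0\). The plan is to take a trace of \(L_g(S)\) and read off \(S\) from it. Fix \(x\) and a \(g\)-unitary frame \(\{e_i\}\) at \(x\), so that \(g_{i\bar j}=\delta_{ij}\). Write a Hermitian symmetric tensor \(S\in\mathcal H\) through its components \(S_{i\bar j}=S(e_i,\bar e_j)\), which form a Hermitian matrix. By definition \eqref{eq:L-def},
\[
(L_g(S))_{p\bar q i\bar j}=S_{i\bar q}\delta_{pj}+S_{p\bar q}\delta_{ij}+S_{i\bar j}\delta_{pq}+S_{p\bar j}\delta_{iq}.
\]
Moreover, an element of \(\mathcal K\) is determined by its \((1,0),(0,1),(1,0),(0,1)\) components, because of the \(J\)-invariance (iv). So \(L_g(S)=0\) as a real tensor is equivalent to the vanishing of all these complex components.

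Assume \(L_g(S)=0\) and contract over \(i=j\), summing over \(i\) (this is the Ricci-type trace). This gives
\[
0=S_{p\bar q}+nS_{p\bar q}+(\tr S)\,\delta_{pq}+S_{p\bar q}=(n+2)S_{p\bar q}+(\tr S)\,\delta_{pq},
\]
where \(\tr S=\sum_i S_{i\bar i}\). Now take the trace over \(p=q\):
\[
(n+2)\tr S+n\tr S=2(n+1)\tr S=0,
\]
so \(\tr S=0\). Substituting back gives \((n+2)S_{p\bar q}=0\), hence \(S_{p\bar q}=0\) for all \(p,q\). Because \(S\) is Hermitian symmetric and \(J\)-invariant, it is determined by the components \(S_{p\bar q}\) (its \((2,0)\) and \((0,2)\) parts vanish). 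Therefore \(S=0\).

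There is no real obstacle here. The one point needing care is confirming that the \((1,1)\)-components of \(S\) determine \(S\), and likewise that the mixed components of \(L_g(S)\) determine \(L_g(S)\); both follow from the \(J\)-invariance in the definitions of \(\mathcal H\) and \(\mathcal K\). Note that the argument only needs \(n+2\neq0\) and \(2(n+1)\neq0\), which always hold, so the hypothesis \(n\geq2\) serves only to keep us in the ambient setting rather than being used in the computation.
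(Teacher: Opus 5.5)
Your proof is correct and follows the paper's argument. The paper contracts the first index pair of \(L_g(S)\) rather than the second, but this yields the same identity \((n+2)S+(\operatorname{tr}_gS)g=0\), after which both arguments trace once more to get \(2(n+1)\operatorname{tr}_gS=0\); your remark that the hypothesis \(n\geq2\) is not used in the computation is also accurate.
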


\begin{proof}
Suppose \(L_g(S)=0\).  Contracting the \(k,\bar l\) pair gives
\begin{equation}\label{eq:L-contract}
    (n+2)S+(\operatorname{tr}_gS)g=0.
\end{equation}
Indeed, in a unitary frame,
\begin{align*}
\sum_{k=1}^n(L_g(S))_{k\bar k i\bar j}
&=
\sum_k\left(
S_{i\bar k}\delta_{kj}
+S_{k\bar k}\delta_{ij}
+S_{i\bar j}\delta_{kk}
+S_{k\bar j}\delta_{ik}
\right)=(n+2)S_{i\bar j}
+(\operatorname{tr}_gS)\delta_{ij}.
\end{align*}
Taking one more trace yields
\[
    2(n+1)\operatorname{tr}_gS=0.
\]
Thus \(\operatorname{tr}_gS=0\), and \eqref{eq:L-contract} gives
\(S=0\).
\end{proof}

\subsection{The K\"ahler curvature decomposition and Bochner-flatness}
\label{subsec:bochner}

For any Hermitian \((1,1)\)-tensor \(S\), define its \(g\)-trace-free
part by
\[
    S^\circ
    :=S-\frac{\trg S}{n}g.
\]
In particular,
\[
    (\Ric^g)^\circ
    =\Ric^g-\frac{\trg(\Ric^g)}{n}g.
\]
Applying this notation to \(A\), write
\begin{equation}\label{eq:A-splitting}
    A=A^\circ+\frac{\trg A}{n}g.
\end{equation}
Since
\begin{equation}\label{eq:Lg-equals-2G}
    L_g(g)=2G_g,
\end{equation}
identity \eqref{eq:curvature-identity} becomes
\begin{equation}\label{eq:curvature-splitting}
    R^g
    =
    \frac12L_g(A^\circ)
    +
    \left(\frac{\beta_t}{2}
    +\frac{\trg A}{n}\right)G_g.
\end{equation}

In a \(g\)-unitary basis, define the Ricci and scalar contractions of
\(R\in\mathcal K\) by
\[
 \Ric(R)_{i\bar j}
 :=
 \sum_{k=1}^n R_{k\bar k i\bar j},
 \qquad
 s(R)
 :=
 \sum_{i=1}^n\Ric(R)_{i\bar i}.
\]
For \(R^g\), we write
\[
 \Ric^g:=\Ric(R^g),
 \qquad
 s_g:=s(R^g).
\]
Set
\[
 \mathcal H_0
 :=
 \left\{
 S\in\mathcal H:
 \sum_{i=1}^n S_{i\bar i}=0
 \right\},
\]
and
\[
 \mathcal B
 :=
 \left\{
 R\in\mathcal K:
 \sum_{k=1}^nR_{k\bar k i\bar j}=0
 \text{ for all }1\leq i,j\leq n
 \right\}.
\]
The metric \(g\) induces an inner product on \(\mathcal K\).  The
Bochner tensor was introduced in \cite{Bochner}.  The corresponding
\(U(n)\)-irreducible orthogonal decomposition was obtained in
\cite{Alekseevskii}; see also \cite{HuangWan}.  In the notation above,
\begin{equation}\label{eq:kahler-decomposition}
 \mathcal K
 =
 \mathcal B
 \oplus L_g(\mathcal H_0)
 \oplus\R G_g.
\end{equation}
Here
\(L_g(\mathcal H_0)=\{L_g(S):S\in\mathcal H_0\}\).
Let \(P_{\mathcal B}:\mathcal K\to\mathcal B\) be the orthogonal
projection.  By definition, the Bochner curvature tensor of \(g\) is
\[
 \Boch(g):=P_{\mathcal B}(R^g).
\]
A K\"ahler metric is called \emph{Bochner--K\"ahler}, or
\emph{Bochner-flat}, if \(\Boch(g)=0\).  Since
\(A^\circ\in\mathcal H_0\), the right-hand side of
\eqref{eq:curvature-splitting} lies in
\(L_g(\mathcal H_0)\oplus\R G_g\).  Therefore its orthogonal projection
onto \(\mathcal B\) vanishes:
\begin{equation}\label{eq:bochner-zero}
    P_{\mathcal B}(R^g)=\Boch(g)=0.
\end{equation}
Thus the universal K\"ahler metric \(g\) is Bochner--K\"ahler.

Contracting \eqref{eq:curvature-identity} gives
\begin{equation}\label{eq:ricci-general}
    \Ric^g
    =
    \frac{n+1}{2}\beta_t g
    +\frac12\bigl((n+2)A+(\trg A)g\bigr),
\end{equation}
and therefore
\begin{equation}\label{eq:ricci0-general}
    (\Ric^g)^\circ=\frac{n+2}{2}A^\circ.
\end{equation}
The scalar contraction is
\begin{equation}\label{eq:scalar-general}
    s_g
    =
    (n+1)\left(\frac n2\beta_t+\trg A\right).
\end{equation}

We recall from \cite{HuangWan} the following facts about the
Tricerri--Vanhecke Bochner tensor.  For a Hermitian metric \(k\), let
\(B^{\mathrm{TV}}(k)\) denote the Bochner component in the
Tricerri--Vanhecke \(U(n)\)-decomposition of the Riemannian curvature
tensor of \(k\).  The metric \(k\) is called
\emph{Bochner-flat in the sense of Tricerri--Vanhecke} if
\[
 B^{\mathrm{TV}}(k)=0.
\]
This condition is preserved under Hermitian conformal changes.  When
\(k\) is K\"ahler, it is equivalent to the vanishing of the ordinary
K\"ahler Bochner tensor.  In complex dimension two, we use the
corrected four-dimensional formula for \(B^{\mathrm{TV}}\).

We now pass from \(g\) to \(h\).  Naturality under pullback by local
diffeomorphisms gives
\[
 B^{\mathrm{TV}}(\widetilde h)
 =
 B^{\mathrm{TV}}(\pi^*h)
 =
 \pi^*B^{\mathrm{TV}}(h).
\]
Since \(\pi\) is surjective, the first equivalence below follows.  The
second follows from \(\widetilde h=e^{2f}g\) and the conformal
invariance above.  The last follows because \(g\) is K\"ahler.  Hence
\begin{equation}\label{eq:TV-equivalence}
 B^{\mathrm{TV}}(h)=0
 \quad\Longleftrightarrow\quad
 B^{\mathrm{TV}}(\widetilde h)=0
 \quad\Longleftrightarrow\quad
 B^{\mathrm{TV}}(g)=0
 \quad\Longleftrightarrow\quad
 \Boch(g)=0.
\end{equation}
In particular, \eqref{eq:bochner-zero} gives
\(B^{\mathrm{TV}}(h)=0\).

\section{The globally conformally K\"ahler case}\label{sec:gck}

Assume that the Lee form \(\theta\) is exact.  By
Definition~\ref{def:lck}, this is precisely the GCK case.  Choose
\(f\in C^\infty(M,\R)\) such that \(\theta=2\,df\).  Then
\eqref{eq:local-kahler-rescaling} shows that
\(g_M:=e^{-2f}h\) is a globally defined K\"ahler metric on \(M\), and
\begin{equation}\label{eq:GCK-conformal}
    h=e^{2f}g_M.
\end{equation}
The computation in Section~\ref{sec:curvature}, with \(g_M\) in place
of \(g\), applies directly on \(M\) and gives \(\Boch(g_M)=0\).

\subsection{Compact Bochner--K\"ahler rigidity}

Thus $(M,g_M)$ is a connected compact Bochner--K\"ahler manifold.
By Bryant's compact classification \cite{Bryant}, it is a compact
quotient of a Hermitian symmetric product
\[
M_c^p\times M_{-c}^{\,n-p},
\]
where $M_c^q$ denotes the simply connected complex
$q$-dimensional K\"ahler space form of constant holomorphic sectional
curvature $c$. In particular, $g_M$ is locally symmetric. Consequently,
\begin{equation}\label{eq:parallel-curvature}
    \nabla^{g_M}R^{g_M}=0.
\end{equation}

Since contraction commutes with \(\nabla^{g_M}\) and
\(\nabla^{g_M}g_M=0\),
\[
    \nabla^{g_M}\Ric^{g_M}=0,
    \qquad
    \nabla^{g_M}s_{g_M}=0,
    \qquad
    \nabla^{g_M}(\Ric^{g_M})^\circ=0.
\]
Combining this with \eqref{eq:ricci0-general}, applied to \(g_M\),
gives
\begin{equation}\label{eq:parallel-A0}
    \nabla^{g_M}A^\circ
    =
    \frac{2}{n+2}\nabla^{g_M}(\Ric^{g_M})^\circ
    =0.
\end{equation}

\subsection{Vanishing of the trace-free part when
\texorpdfstring{\(t=1\)}{t=1}}

When \(t=1\), one has \(a=0\).  Thus
\eqref{eq:tensor-A-def} gives
\[
 A_{i\bar j}=f_{i\bar j}.
\]
Put \(P:=A^\circ\).  Since
\[
 A
 =
 A^\circ+\frac{\operatorname{tr}_{g_M}A}{n}g_M
 \qquad\text{and}\qquad
 \operatorname{tr}_{g_M}A=\Delta_{g_M}f,
\]
we obtain
\[
 f_{i\bar j}
 =
 P_{i\bar j}
 +\frac{\Delta_{g_M}f}{n}(g_M)_{i\bar j}.
\]
By \eqref{eq:parallel-A0}, \(P\) is parallel. Applying the divergence theorem to \(P^{i\bar j}f_{\bar j}e_i\) , we obtain
\begin{align}
0
&=\int_M\nabla^{g_M}_i
  \bigl(P^{i\bar j}f_{\bar j}\bigr)\,dV_{g_M}
 \nonumber\\
&=\int_M P^{i\bar j}f_{i\bar j}\,dV_{g_M}
 \nonumber\\
&=\int_M P^{i\bar j}
  \left(
  P_{i\bar j}
  +\frac{\Delta_{g_M}f}{n}(g_M)_{i\bar j}
  \right)dV_{g_M}
 \nonumber\\
&=\int_M|P|_{g_M}^2\,dV_{g_M}.
\label{eq:GCK-t1-integral}
\end{align}
Here we used \(\nabla^{g_M}P=0\) and
\(\operatorname{tr}_{g_M}P=0\).  Since the integrand is nonnegative,
\(P=0\), and hence
\begin{equation}\label{eq:GCK-A0-zero-t1}
 A^\circ=0.
\end{equation}

\subsection{Vanishing of the trace-free part when \texorpdfstring{\(t\neq1\)}{t not equal to 1}}

Now assume \(t\neq1\), so \(a>0\), and define
\begin{equation}\label{eq:GCK-xi}
    \xi:=e^{-af}>0.
\end{equation}
Then
\begin{equation}\label{eq:GCK-xi-hessian}
 \xi_{i\bar j}
 =-a\xi f_{i\bar j}+a^2\xi f_if_{\bar j}
 =-a\xi A_{i\bar j}.
\end{equation}
Again put \(P:=A^\circ\).  Applying the divergence theorem to
\(P^{i\bar j}\xi_{\bar j}e_i\), and using that \(P\) is parallel, gives
\begin{align}
0
&=\int_M\nabla^{g_M}_i
  \bigl(P^{i\bar j}\xi_{\bar j}\bigr)\,dV_{g_M}
 \nonumber\\
&=\int_M P^{i\bar j}\xi_{i\bar j}\,dV_{g_M}
 \nonumber\\
&=-a\int_M\xi\,P^{i\bar j}A_{i\bar j}\,dV_{g_M}
 \nonumber\\
&=-a\int_M\xi|P|_{g_M}^2\,dV_{g_M}.
\label{eq:GCK-tnot1-integral}
\end{align}
Here the third equality follows from
\eqref{eq:GCK-xi-hessian}, and the last uses
\(\operatorname{tr}_{g_M}P=0\).  Since \(a>0\) and \(\xi>0\),
\begin{equation}\label{eq:GCK-A0-zero-tnot1}
 A^\circ=0.
\end{equation}

Thus, for every \(t\),
\begin{equation}\label{eq:GCK-A-pure-trace}
    A=\lambda g_M,
\end{equation}
for some real-valued function \(\lambda\).

\subsection{Constancy of the conformal factor}
\label{subsec:gck-conformal-constancy}

Define
\begin{equation}\label{eq:GCK-u}
    u=
    \begin{cases}
    f,&t=1,\\[1mm]
    e^{-af},&t\neq1.
    \end{cases}
\end{equation}
Equations \eqref{eq:GCK-A-pure-trace} and
\eqref{eq:GCK-xi-hessian} show that there is a real function \(F\) such
that
\begin{equation}\label{eq:GCK-u-Hessian}
    u_{i\bar j}=F(g_M)_{i\bar j},
\end{equation}
or equivalently
\begin{equation}\label{eq:GCK-ddbar}
    \ddbar u=F\omega_{g_M}.
\end{equation}
More explicitly, $F=\lambda$ when $t=1$, whereas
$F=-ae^{-af}\lambda$ when $t\neq 1$. Taking the exterior derivative
and using $d\omega_{g_M}=0$, we obtain
\[
dF\wedge\omega_{g_M}=0.
\]
Wedging with $\omega_{g_M}^{n-2}$ and applying the pointwise linear
Lefschetz isomorphism
\[
L^{n-1}\colon
\Lambda^1T_x^*M
\longrightarrow
\Lambda^{2n-1}T_x^*M,
\qquad
\eta\longmapsto\eta\wedge\omega_{g_M}^{n-1},
\]
we conclude that $dF=0$.
Taking the trace of \eqref{eq:GCK-u-Hessian} and integrating over \(M\) gives
\[
    0=\int_M\Delta_{g_M}u\,dV_{g_M}
    =nF\operatorname{Vol}_{g_M}(M),
\]
so \(F=0\).  Hence \(u\) is harmonic and is constant by the strong maximum
principle on the connected compact manifold.  It follows that \(f\) is
constant in both cases (for \(t\neq1\), use
\(f=-a^{-1}\log u\)).  Therefore \(h\) is K\"ahler.

For a K\"ahler metric all Gauduchon connections coincide with the
K\"ahler connection, and the curvature already has the K\"ahler
symmetries.  Consequently \(\widehat R^t=R^h\), and
\eqref{eq:pointwise-hsc-h} directly gives
\begin{equation}\label{eq:GCK-space-form-tensor}
    R^h=\frac{\kappa}{2}G_h.
\end{equation}
Contracting gives
\[
    \Ric^h=\frac{n+1}{2}\kappa h.
\]
The K\"ahler Ricci form is
\[
 \rho_h
 :=\sqrt{-1}\,\Ric^h_{i\bar j}\,dz^i\wedge d\bar z^j
 =\frac{n+1}{2}\kappa\,\omega_h.
\]
By the local Chern--Ricci formula
\(\rho_h=-\sqrt{-1}\,\partial\bar\partial
\log\det(h_{i\bar j})\), the form \(\rho_h\) is closed; hence
\[
    0=d\rho_h=\frac{n+1}{2}\,d\kappa\wedge\omega_h.
\]
Wedging with \(\omega_h^{n-2}\) and using the same pointwise Lefschetz
isomorphism gives \(d\kappa=0\).  Thus \(\kappa\) is constant, and
\eqref{eq:GCK-space-form-tensor} shows that \(h\) is a complex space
form.

\section{The strict LCK and globally constant cases}\label{sec:strict}

\subsection{Euclidean uniformization}\label{subsec:uniformization}

Assume from now on that the Lee form is not exact.  By
\eqref{eq:TV-equivalence}, the Tricerri--Vanhecke Bochner tensor
vanishes.  The following uniformization result follows from Fried's
classification of closed similarity manifolds and Kamishima's work on
Bochner-flat LCK manifolds \cite{Fried,Kamishima}; see also
\cite{HuangWan}.

\begin{proposition}\label{prop:uniformization}
Let \((M^n,h)\) be a connected compact strict LCK manifold, \(n\geq2\),
whose Tricerri--Vanhecke Bochner tensor vanishes.  After multiplying the
universal K\"ahler metric by a positive constant, there is a holomorphic
isometry
\begin{equation}\label{eq:euclidean-uniformization}
    (\widetilde M,g)
    \cong
    (\C^n\setminus\{0\},g_0).
\end{equation}
Under this identification, every deck transformation is of the form
\begin{equation}\label{eq:similarity-deck}
    \gamma(z)=r_\gamma U_\gamma z,
    \qquad
    r_\gamma>0,
    \quad U_\gamma\in U(n),
\end{equation}
and at least one deck transformation satisfies \(r_\gamma\neq1\).
\end{proposition}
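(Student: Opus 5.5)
The plan is to turn the vanishing of the Tricerri--Vanhecke Bochner tensor into flatness of a similarity structure, and then invoke Fried's classification. By \eqref{eq:TV-equivalence}, the universal K\"ahler metric \(g=e^{-2f}\pi^*h\) is Bochner--K\"ahler. By \eqref{eq:deck-homothety}, each deck transformation \(\gamma\) is a holomorphic homothety of \(g\) with ratio \(r_\gamma\), and the character \(\gamma\mapsto r_\gamma\) is nontrivial because \(\theta\) is not exact. The steps are as follows.

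\textbf{Step 1.} Show that \(g\) is flat. Pick \(\gamma\) with \(0<r_\gamma<1\) as in \eqref{eq:strict-contraction}. Since \(\gamma^*g=r_\gamma^2g\), the curvature norms satisfy
\[
|R^g|_g(\gamma^m x)=r_\gamma^{-2m}|R^g|_g(x)\quad\text{for } m\in\mathbb Z.
\]
Fix a compact fundamental domain \(K\) for the deck group; this is possible because \(M\) is compact. Then \(|R^g|_g\) is bounded on \(K\). Following Kamishima, I would use that Bochner-flat K\"ahler metrics are locally modeled on Bryant's list, and are real-analytic with a rigid local structure. Together with the holonomy of the similarity group, this should force \(R^g\equiv0\).

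A more self-contained route is to exploit the GCK identity \eqref{eq:curvature-identity} together with compactness of \(M\). The Hermitian metric \(h\) lives on compact \(M\), while \(g=e^{-2f}\pi^*h\) and \(f\to\pm\infty\) along orbits of \(\gamma\). Scaling the curvature appropriately, the \(h\)-norm of \(R^g\) is a function on \(M\); call it \(\phi\). Since \(|R^g|_g=e^{2f}\phi\circ\pi\), we may choose a point where this quantity is maximized or minimized on \(K\) and compare with its \(\gamma\)-translates. I expect this step to be the main obstacle. I would first try quoting Kamishima's theorem directly, since it states that a compact strict LCK Bochner-flat manifold has flat universal K\"ahler metric, or is modeled on Hopf geometry.

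\textbf{Step 2.} Once \(g\) is flat, \((M,[h])\) is a closed similarity manifold: the developing map \(\widetilde M\to\C^n\) is a local holomorphic isometry of \(g\), equivariant for a holonomy into the complex similarity group \(\C^*\cdot U(n)\ltimes\C^n\). The holonomy is not purely Euclidean because \(r_\gamma\neq1\). By Fried's theorem, a closed similarity manifold that is not Euclidean is finitely covered by a Hopf manifold. Hence the developing map is a diffeomorphism onto \(\C^n\setminus\{p\}\) for a fixed point \(p\), and we translate so that \(p=0\).

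\textbf{Step 3.} Read off the deck transformations. Every holonomy element fixes \(0\), so it is linear. It is a holomorphic similarity of \(g_0\) with ratio \(r_\gamma\), so it has the form \(r_\gamma U_\gamma\) with \(U_\gamma\in U(n)\), which gives \eqref{eq:similarity-deck}.

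The positive constant in the statement absorbs the choice of base point \(\widetilde x_0\) in the definition of \(f\), since changing it rescales \(g\). It also absorbs the normalization of the developing map, so that \(g\) matches \(g_0\) exactly rather than a constant multiple of it.
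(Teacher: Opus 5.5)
Your proposal follows essentially the same route as the paper, which gives no independent argument for this proposition. The paper simply derives it from Kamishima's work on Bochner-flat LCK manifolds (flatness of the universal K\"ahler metric) combined with Fried's classification of closed similarity manifolds, exactly as in your Steps 1--3. One caution: the tentative ``self-contained'' max/min comparison in Step 1 would not by itself give \(R^g\equiv0\), since cocompact homotheties alone do not force flatness (the K\"ahler cones \(\ii\partial\bar\partial|z|^{2a}\), \(a\neq1\), on \(\C^n\setminus\{0\}\) are counterexamples), so flatness genuinely rests on Kamishima's theorem, as it does in the paper.
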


We absorb the constant rescaling of \(g\) into \(f\): if
\(g\) is replaced by \(cg\), replace \(f\) by
\(f-\frac12\log c\).  This leaves \(\pi^*h=e^{2f}g\) unchanged.
We retain the notation \(f\) after identifying the normalized metric
with \(g_0\).

Choose a deck transformation \(\gamma\) with
\(0<r:=r_\gamma<1\), and write
\begin{equation}\label{eq:chosen-gamma}
    \gamma(z)=rUz,
    \qquad U\in U(n).
\end{equation}
The automorphy relation \eqref{eq:f-automorphy} becomes
\begin{equation}\label{eq:flat-f-automorphy}
    f(rUz)=f(z)-\log r.
\end{equation}

\subsection{The flat equation and the exceptional parameters}
\label{subsec:flat}

On the Euclidean cover, \(R^{g_0}=0\), so
\eqref{eq:curvature-identity} reduces to
\begin{equation}\label{eq:flat-curvature-identity}
    0=\frac{\beta_t}{2}G_{g_0}+\frac12L_{g_0}(A).
\end{equation}
Since \(L_{g_0}(g_0)=2G_{g_0}\), this can be rewritten as
\[
 0
 =
 \frac12L_{g_0}\left(A+\frac{\beta_t}{2}g_0\right).
\]
Lemma~\ref{lem:L-injective} therefore gives
\begin{equation}\label{eq:A-flat}
    A=-\frac{\beta_t}{2}g_0.
\end{equation}
Equivalently,
\begin{equation}\label{eq:flat-master-PDE}
    f_{i\bar j}-af_if_{\bar j}
    =
    -\frac12\left(
    e^{2f}\widetilde\kappa+2a|\partial f|_{g_0}^2
    \right)\delta_{ij}.
\end{equation}
We use the following lemma.

\begin{lemma}\label{lem:constant-lambda}
Let \((N^n,g)\), \(n\geq2\), be a connected K\"ahler manifold, and let
\(u\in C^\infty(N,\R)\).  In a local unitary frame
\(\{e_1,\ldots,e_n\}\), write
\(u_{i\bar j}=(\nabla^2u)(e_i,\bar e_j)\), where \(\nabla\) is the
K\"ahler connection, and use semicolons for further covariant
derivatives.  If
\begin{equation}\label{eq:lambda-Hessian}
    u_{i\bar j}=\lambda\delta_{ij}
\end{equation}
for a real-valued function \(\lambda\), then \(\lambda\) is constant.
\end{lemma}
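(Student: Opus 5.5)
Take one more covariant derivative of \eqref{eq:lambda-Hessian} and use the Ricci identity for the Kähler connection to swap derivative indices. The hope is to reach an identity of the form \((n-1)\lambda_k=0\). Since \(n\geq2\), this gives \(d\lambda=0\), and connectedness of \(N\) then makes \(\lambda\) constant.

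\emph{Step 1: differentiate.} Differentiate \eqref{eq:lambda-Hessian} in the direction \(e_k\). Because \(\nabla g=0\), the right-hand side gives
\[
u_{i\bar j;k}=\lambda_k\delta_{ij}.
\]

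\emph{Step 2: commute indices.} On a Kähler manifold the torsion vanishes and the connection preserves type. Hence \(u_{i;k}=u_{k;i}\), since \(\nabla^2u\) is symmetric. In addition, \(\nabla\) commutes with \(\bar\partial\)-type components, so
\[
u_{i\bar j k}=u_{ik\bar j}\pm R_{\ldots}u_{\ldots}.
\]
The key point is a cleaner symmetry. The \((1,1)\)-form \(\ii\partial\bar\partial u\) is closed, which amounts to the symmetry \(u_{i\bar j;k}=u_{k\bar j;i}\); this needs no curvature term, because \(\partial\bar\partial u\) is \(d\)-closed and \(\nabla\) is torsion-free. This is the step I would check carefully: \(u_{i\bar j;k}-u_{k\bar j;i}\) is the \((2,1)\)-component of \(d(\partial\bar\partial u)\) written with the torsion-free connection, so it vanishes. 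Equivalently, \(\nabla_k\nabla_i\bar\partial_j u\) is symmetric in \(i,k\) because \(\nabla_{\bar j}\) only enters through \(\bar\partial u\), and the curvature commutator \([\nabla_k,\nabla_i]\) of type \((2,0)\) vanishes on a Kähler manifold.

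\emph{Step 3: contract.} Combining Steps 1 and 2 gives
\[
\lambda_k\delta_{ij}=\lambda_i\delta_{kj}.
\]
Take \(j=i\neq k\), which is possible because \(n\geq2\). This yields \(\lambda_k=0\) for every \(k\). Since \(\lambda\) is real, \(\lambda_{\bar k}=\overline{\lambda_k}=0\) as well, so \(d\lambda=0\) and \(\lambda\) is constant on the connected manifold \(N\).

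A more invariant alternative, mirroring Section~\ref{subsec:gck-conformal-constancy}, avoids the frame computation. Write the hypothesis as \(\ddbar u=\lambda\omega_g\) and apply \(d\) to get \(d\lambda\wedge\omega_g=0\). The pointwise Lefschetz isomorphism \(L^{n-1}\) then forces \(d\lambda=0\) when \(n\geq2\). I would present this version as the main argument and keep the index computation as a check.

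The main obstacle is justifying the symmetry \(u_{i\bar j;k}=u_{k\bar j;i}\) without stray curvature terms. The form-level argument sidesteps it entirely, because \(d\omega_g=0\) and \(d(\ddbar u)=0\).
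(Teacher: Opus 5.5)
Your proposal is correct, and Steps 1--3 are the paper's own proof. The symmetry \(u_{i\bar j;k}=u_{k\bar j;i}\) holds with no curvature term, since the K\"ahler curvature has type \((1,1)\), or equivalently since \(\ddbar u\) is closed; combined with \(j=i\neq k\) it gives \(\lambda_k=0\), and reality of \(\lambda\) then gives \(\lambda_{\bar k}=0\). Your form-level alternative (\(d\lambda\wedge\omega_g=0\) plus the Lefschetz isomorphism) is also valid, and it is exactly the argument the paper uses for \(dF=0\) in Subsection~\ref{subsec:gck-conformal-constancy}. You can drop the vague intermediate formula with unspecified curvature terms in Step~2, since it plays no role.
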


\begin{proof}
By the Ricci identities for the K\"ahler connection and their complex
conjugates,
\begin{equation}\label{eq:ricci-commutation-u}
    u_{i\bar j;m}=u_{m\bar j;i},
    \qquad
    u_{i\bar j;\bar m}=u_{i\bar m;\bar j}.
\end{equation}
At any point, choose a normal unitary frame.  For each \(m\), choose
\(i\neq m\).  Then
\[
    \lambda_m
    =u_{i\bar i;m}
    =u_{m\bar i;i}
    =(\lambda\delta_{mi})_{;i}
    =0.
\]
Taking complex conjugates gives \(\lambda_{\bar m}=0\).  Hence
\(d\lambda=0\).
\end{proof}

\begin{lemma}\label{lem:positive-pluriharmonic}
Let \(v\) be a real pluriharmonic function on
\(\C^n\setminus\{0\}\), \(n\geq2\).  If \(v>0\), then \(v\) is
constant.  Moreover, if
\[
    v(rUz)=v(z)+c
\]
for some \(0<r<1\), \(U\in U(n)\), and \(c\in\R\), then \(c=0\).
\end{lemma}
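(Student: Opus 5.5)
Extend $v$ across the origin. Since $n\geq2$, the origin has codimension at least two, and pluriharmonic functions are locally real parts of holomorphic functions. On any ball $B$ around $0$, the punctured ball $B\setminus\{0\}$ is simply connected when $n\geq2$, since $S^{2n-1}$ is simply connected. Hence on $B\setminus\{0\}$ we can write $v=\operatorname{Re}F$ with $F$ holomorphic. By Hartogs' extension theorem, $F$ extends holomorphically to $B$, so $v$ extends to a pluriharmonic function on all of $\C^n$. Alternatively, the restriction of $v$ to each complex line through $0$ is a harmonic function on a punctured disc with no logarithmic term; the extension argument above avoids needing that. Call the extension $\bar v$; it is pluriharmonic on $\C^n$, in particular harmonic on $\R^{2n}$. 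By continuity $\bar v\geq0$ everywhere, and Liouville's theorem for nonnegative harmonic functions on $\R^{2n}$ gives that $\bar v$ is constant. This proves the first assertion.

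For the second assertion, $v$ need not be positive, so I use the extension differently. With $\bar v$ pluriharmonic on $\C^n$, the relation $\bar v(rUz)=\bar v(z)+c$ holds on $\C^n\setminus\{0\}$, hence on $\C^n$ by continuity. Evaluating at $z=0$ gives $\bar v(0)=\bar v(0)+c$, so $c=0$. If one prefers to avoid the extension here, iterate instead: $v((rU)^kz_0)=v(z_0)+kc$, and $(rU)^kz_0\to0$. Since $\bar v$ is bounded on a neighborhood of $0$, this forces $c=0$. Either route works, and both rest on the extension across the origin.

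The only delicate point is the existence of a global pluriharmonic conjugate on the punctured ball, which is what lets Hartogs apply. This follows because $H^1(B\setminus\{0\},\R)=0$ for $n\geq2$: the obstruction to writing $v=\operatorname{Re}F$ is the class of the closed form $d^cv$. Once $v$ is written as the real part of a holomorphic function, the extension, Liouville and the evaluation at $0$ are all routine. Note also that positivity of $v$ is not needed for the extension step itself, only for the Liouville step in the first assertion.
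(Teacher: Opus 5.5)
Your proof is correct and follows essentially the same route as the paper: you write $v$ as the real part of a holomorphic function using simple connectivity (which needs $n\geq2$), extend across the origin by Hartogs, apply a Liouville theorem, and obtain $c=0$ by letting $z\to0$. The differences are cosmetic: the paper takes the holomorphic primitive globally on $\C^n\setminus\{0\}$ and applies the complex Liouville theorem to the bounded entire function $e^{-\varphi}$, whereas you extend via a punctured ball and use Liouville for nonnegative harmonic functions on $\R^{2n}$.
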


\begin{proof}
The form \(2\partial v\) is closed and holomorphic.  Since
\(\C^n\setminus\{0\}\) is simply connected, the
Poincar\'e lemma gives a holomorphic function \(\varphi\) with
\(d\varphi=2\partial v\) and \(v=\operatorname{Re}\varphi\), after
adding a real constant.  Hartogs' extension theorem extends
\(\varphi\) to \(\C^n\).  If \(v>0\), then \(e^{-\varphi}\) is entire
and
\[
    \bigl|e^{-\varphi}\bigr|
    =e^{-\operatorname{Re}\varphi}
    =e^{-v}\leq1.
\]
Liouville's theorem shows that it is constant, and hence \(v\) is
constant.  For the second assertion,
\[
    \operatorname{Re}\bigl(\varphi(rUz)-\varphi(z)\bigr)=c.
\]
Letting \(z\to0\) gives \(c=0\).
\end{proof}

\subsubsection{Exclusion of the Chern parameter
\texorpdfstring{\(t=1\)}{t=1}}
\label{subsec:exclude-chern}

Suppose \(t=1\).  Then \(a=0\), and \eqref{eq:flat-master-PDE} has the
form
\begin{equation}\label{eq:flat-t1-Hessian}
    f_{i\bar j}=\lambda\delta_{ij},
    \qquad
    \lambda:=-\frac12e^{2f}\widetilde\kappa.
\end{equation}
By Lemma~\ref{lem:constant-lambda}, \(\lambda\) is constant.
Taking the complex Hessian of the two sides of
\eqref{eq:flat-f-automorphy} gives
\[
 \frac{\partial^2(f\circ\gamma)}
 {\partial z_i\,\partial\bar z_j}
 =r^2\sum_{p,q}U_{pi}\overline{U_{qj}}\,
   f_{p\bar q}(rUz)
 =r^2\lambda\delta_{ij},
 \qquad
 \frac{\partial^2(f-\log r)}
 {\partial z_i\,\partial\bar z_j}
 =\lambda\delta_{ij}.
\]
Therefore
\[
    (r^2-1)\lambda=0.
\]
Since \(r\neq1\), one has \(\lambda=0\).  Thus \(f\) is real
pluriharmonic on \(\C^n\setminus\{0\}\).  Equation
\eqref{eq:flat-f-automorphy} and the second assertion of
Lemma~\ref{lem:positive-pluriharmonic}, applied with \(v=f\) and
\(c=-\log r\), give \(-\log r=0\), contrary to \(0<r<1\).  Hence
\(t\neq1\).

\subsubsection{The case
\texorpdfstring{\(t\neq1\)}{t not equal to 1}}

Now suppose \(t\neq1\), so \(a>0\), and define
\begin{equation}\label{eq:strict-xi}
    \xi:=e^{-af}>0.
\end{equation}
Differentiating \(\xi=e^{-af}\), and then using the definition of \(A\)
and \eqref{eq:A-flat}, we obtain
\begin{equation}\label{eq:strict-xi-Hessian}
 \xi_{i\bar j}
 =-a\xi f_{i\bar j}+a^2\xi f_if_{\bar j}
 =-a\xi A_{i\bar j}
 =\frac{a\beta_t}{2}\xi\,\delta_{ij}.
\end{equation}
Put
\begin{equation}\label{eq:strict-lambda-def}
    \lambda:=\frac{a\beta_t}{2}\xi.
\end{equation}
Then
\begin{equation}\label{eq:strict-xi-Hessian-lambda}
    \xi_{i\bar j}=\lambda\delta_{ij}.
\end{equation}
Since \(\beta_t\) and \(\xi\) are real, so is \(\lambda\).
Lemma~\ref{lem:constant-lambda} shows that \(\lambda\) is
constant.  For later use, expand \eqref{eq:strict-lambda-def} using
\[
    e^{2f}=\xi^{-2/a},
    \qquad
    \partial\xi=-a\xi\,\partial f.
\]
This gives
\begin{equation}\label{eq:strict-lambda-expanded}
 \lambda
 =\frac a2\widetilde\kappa\,\xi e^{2f}
  +a^2\xi|\partial f|_{g_0}^2
 =\frac a2\widetilde\kappa\,\xi^{1-2/a}
  +\frac{|\partial\xi|_{g_0}^2}{\xi}.
\end{equation}

Equation \eqref{eq:flat-f-automorphy} gives explicitly
\begin{equation}\label{eq:strict-xi-automorphy}
    \xi(rUz)
    =e^{-a(f(z)-\log r)}
    =r^a\xi(z).
\end{equation}
Taking the complex Hessian of the two sides and using
\eqref{eq:strict-xi-Hessian-lambda},
\begin{align*}
\frac{\partial^2(\xi\circ\gamma)}
 {\partial z_i\,\partial\bar z_j}
&=
r^2\sum_{p,q}
U_{pi}\overline{U_{qj}}\,
\xi_{p\bar q}(rUz)
 =r^2\lambda\sum_pU_{pi}\overline{U_{pj}}
 =r^2\lambda\delta_{ij},\\
\frac{\partial^2(r^a\xi)}
 {\partial z_i\,\partial\bar z_j}
&=r^a\lambda\delta_{ij}.
\end{align*}
Therefore
\begin{equation}\label{eq:r-a-lambda}
    (r^2-r^a)\lambda=0.
\end{equation}

If \(\lambda=0\), then \(\xi\) is positive and real pluriharmonic.
Lemma~\ref{lem:positive-pluriharmonic} gives \(\xi\equiv C\) for some
\(C>0\).  Equation \eqref{eq:strict-xi-automorphy} would then give
\(C=r^aC\), which is impossible because \(0<r<1\) and \(a>0\).
Therefore
\begin{equation}\label{eq:lambda-nonzero}
    \lambda\neq0.
\end{equation}
Equation \eqref{eq:r-a-lambda} now gives \(r^2=r^a\).  Since
\(0<r<1\), it follows that \(a=2\).  Recalling
\(a=(1-t)^2/2\), we obtain
\begin{equation}\label{eq:exceptional-t-proof}
    \frac{(1-t)^2}{2}=2
    \quad\Longleftrightarrow\quad
    t=-1\ \text{or}\ t=3.
\end{equation}

\subsection{The Hopf cover and the admissible metric}
\label{subsec:hopf}

We retain the strict LCK hypotheses.  By the previous subsection,
\[
    a=2,
    \qquad
    \xi=e^{-2f},
    \qquad
    \xi_{i\bar j}=\lambda\delta_{ij},
    \qquad
    \lambda\neq0.
\]

\subsubsection{The cyclic quotient gives a holomorphic covering}

Let \(\gamma(z)=rUz\) be the chosen contraction.  The cyclic
quotient
\begin{equation}\label{eq:cyclic-quotient}
    M_\gamma
    :=
    (\C^n\setminus\{0\})/\langle\gamma\rangle
\end{equation}
is compact.  Indeed, for every \(z\neq0\), one can choose
\(k\in\mathbb Z\) such that
\[
    r\leq r^k|z|\leq1.
\]
Since \(|\gamma^kz|=r^k|z|\), the closed annulus
\[
    \{z\in\C^n:r\leq|z|\leq1\}
\]
meets every \(\langle\gamma\rangle\)-orbit.  Its image under the
quotient map is all of \(M_\gamma\).  Thus \(M_\gamma\) is the
continuous image of a compact set.  Since
\(\langle\gamma\rangle\subset\Deck(\pi)\), the subgroup--covering
correspondence gives a holomorphic covering
\[
    p:M_\gamma\longrightarrow M.
\]
Let
\begin{equation}\label{eq:q-quotient-map}
    q:\C^n\setminus\{0\}\longrightarrow M_\gamma
\end{equation}
be the quotient map.  Under the Euclidean identification of the
universal cover, the maps satisfy
\begin{equation}\label{eq:pi-factorization}
    \pi=p\circ q.
\end{equation}

Since \(U\) is unitarily diagonalizable, after a unitary change of
coordinates write
\begin{equation}\label{eq:D-generator}
    \gamma(z)=Dz,
    \qquad
    D=\operatorname{diag}(d_1,\dots,d_n),
    \qquad |d_j|=r<1.
\end{equation}
Thus \(M_\gamma=M_D\) is an isosceles Hopf manifold.
Under this identification, \(q=q_D\).

\subsubsection{Determination of \texorpdfstring{\(\xi\)}{xi}}

Put \(u=\xi-\lambda|z|^2\).  Since \(u_{i\bar j}=0\), the form
\(\partial u\) is closed and holomorphic.  The domain
\(\C^n\setminus\{0\}\) is simply connected, so the Poincar\'e lemma gives
a holomorphic primitive \(\varphi\) with \(d\varphi=\partial u\).
Since \(d(u-\varphi-\overline\varphi)=0\), after adding a real constant
to \(\varphi\), one has
\(u=\varphi+\overline{\varphi}\).  Hence
\begin{equation}\label{eq:xi-phi-general}
    \xi
    =
    \lambda|z|^2+\varphi(z)+\overline{\varphi(z)}.
\end{equation}
Hartogs' extension theorem extends \(\varphi\) across the origin.

Since \(a=2\) and \(\gamma(z)=Dz\), equation
\eqref{eq:strict-xi-automorphy} becomes
\begin{equation}\label{eq:xi-D-automorphy}
    \xi(Dz)=r^2\xi(z).
\end{equation}
Because \(|d_j|=r\), one has \(|Dz|^2=r^2|z|^2\).  Thus substitution
of \eqref{eq:xi-phi-general} into \eqref{eq:xi-D-automorphy} cancels the
\(\lambda|z|^2\)-terms and yields simply
\[
    \operatorname{Re}\bigl(\varphi(Dz)-r^2\varphi(z)\bigr)=0.
\]
The expression in parentheses is holomorphic and therefore, by the open
mapping theorem, is a purely imaginary constant \(c\).  Replacing
\(\varphi\) by
\(\varphi-c/(1-r^2)\) does not change
\(\varphi+\overline\varphi\) and makes this constant zero.  Thus
\begin{equation}\label{eq:phi-homogeneity}
    \varphi(Dz)=r^2\varphi(z).
\end{equation}
Write the Taylor expansion of \(\varphi\) at the origin as
\[
 \varphi=\sum_{m=0}^\infty P_m,
\]
where \(P_m\) is homogeneous of degree \(m\).  Comparing the
homogeneous terms in \eqref{eq:phi-homogeneity}, we obtain
\[
 P_m(Uz)=r^{2-m}P_m(z).
\]
Since \(U\) preserves the unit sphere,
\[
 \max_{|z|=1}|P_m(Uz)|
 =
 \max_{|z|=1}|P_m(z)|.
\]
Thus, if \(P_m\neq0\), the preceding equation gives
\(r^{2-m}=1\).  Since \(0<r<1\), this is possible only when \(m=2\).
Therefore \(\varphi\) is a homogeneous quadratic polynomial.  Since
\(\lambda\neq0\), there is a complex symmetric matrix \(\mathsf A\)
such that
\begin{equation}\label{eq:phi-matrix}
 \varphi(z)=\lambda z^{\mathsf T}\mathsf A z,
 \qquad
 \mathsf A^{\mathsf T}=\mathsf A.
\end{equation}
Define
\[
 Q_{\mathsf A}(z)
 :=
 |z|^2+z^{\mathsf T}\mathsf A z
 +\overline{z^{\mathsf T}\mathsf A z}.
\]
Then \eqref{eq:xi-phi-general} becomes
\begin{equation}\label{eq:xi-admissible}
 \xi=\lambda Q_{\mathsf A}.
\end{equation}
The holomorphic and antiholomorphic quadratic terms in
\eqref{eq:xi-admissible} have zero average over the unit sphere
\(S^{2n-1}\).  Hence
\begin{equation}\label{eq:lambda-positive}
 \lambda
 =
 \frac{1}{\operatorname{Area}(S^{2n-1})}
 \int_{S^{2n-1}}\xi\,dS
 >0.
\end{equation}
Since \(\xi=\lambda Q_{\mathsf A}\), we also have
\[
 Q_{\mathsf A}(z)>0
 \qquad (z\neq0).
\]

Substituting \eqref{eq:phi-matrix} into
\eqref{eq:phi-homogeneity} and using \(\lambda\neq0\), we obtain
\begin{equation}\label{eq:A-equivariance}
 D^{\mathsf T}\mathsf A D=r^2\mathsf A.
\end{equation}
By the Autonne--Takagi factorization, there is a unitary matrix \(V\)
such that
\[
 \mathsf A
 =
 V^{\mathsf T}\operatorname{diag}(s_1,\dots,s_n)V,
 \qquad s_j\geq0.
\]
Put \(w=Vz\) and write \(w_j=x_j+\sqrt{-1}y_j\).  Then
\[
 Q_{\mathsf A}(z)
 =
 \sum_{j=1}^n
 \bigl((1+2s_j)x_j^2+(1-2s_j)y_j^2\bigr).
\]
Since \(Q_{\mathsf A}>0\) away from the origin, \(s_j<1/2\) for every
\(j\).  The eigenvalues of
\(\mathsf A\overline{\mathsf A}\) are \(s_1^2,\dots,s_n^2\).  Hence
\begin{equation}\label{eq:A-positivity}
 \mathsf A\overline{\mathsf A}<\frac14I.
\end{equation}

Since \(a=2\), one has \(\xi=e^{-2f}\).  Put
\(c_0:=1/\lambda>0\).  Using \(\pi=p\circ q\) and
\(\xi=\lambda Q_{\mathsf A}\), we obtain
\begin{equation}\label{eq:metric-admissible-proof}
 q^*(p^*h)
 =
 \pi^*h
 =
 \xi^{-1}g_0
 =
 \frac{c_0}{Q_{\mathsf A}(z)}g_0.
\end{equation}
Equations \eqref{eq:phi-matrix}, \eqref{eq:A-equivariance},
\eqref{eq:A-positivity}, and \eqref{eq:metric-admissible-proof} show
that \(p^*h\) is admissible.  Thus \(p:M_D\to M\) is the required
holomorphic covering.  Together with Section~\ref{sec:gck}, this proves
the pointwise constant part of
Theorem~\ref{thm:main-classification}.

Conversely, every admissible metric on \(M_D\) has pointwise constant
Gauduchon holomorphic sectional curvature for \(t=-1\) and \(t=3\)
\cite{ChenNie}.

\subsection{The globally constant Gauduchon case}
\label{subsec:global-constant}

Assume that
\[
 \widetilde\kappa\equiv c.
\]
If the Lee form is exact, Section~\ref{sec:gck} shows that \(h\) is
K\"ahler, and \eqref{eq:GCK-space-form-tensor} gives
\[
 R^h=\frac{c}{2}G_h.
\]
Thus \((M,h)\) is a complex space form of holomorphic sectional
curvature \(c\).

Assume now that the Lee form is not exact.  Then \(a=2\) and
\[
 \xi=\lambda Q_{\mathsf A},
 \qquad
 \lambda>0.
\]
Substituting this into \eqref{eq:strict-lambda-expanded} gives
\[
 \widetilde\kappa
 =
 \lambda\left(
 1-\frac{|\partial Q_{\mathsf A}|_{g_0}^2}{Q_{\mathsf A}}
 \right).
\]
Since
\[
 \partial_iQ_{\mathsf A}
 =
 \bar z_i+2(\mathsf A z)_i,
\]
a direct expansion gives
\begin{align}
 |\partial Q_{\mathsf A}|_{g_0}^2
 &=
 |z|^2
 +2z^{\mathsf T}\mathsf A z
 +2\overline{z^{\mathsf T}\mathsf A z}
 +4z^{\mathsf T}\mathsf A\overline{\mathsf A}\,\bar z,
 \notag\\
 \widetilde\kappa(z)
 &=
 -\frac{\lambda}{Q_{\mathsf A}(z)}
 \left(
 z^{\mathsf T}\mathsf A z
 +\overline{z^{\mathsf T}\mathsf A z}
 +4z^{\mathsf T}\mathsf A\overline{\mathsf A}\,\bar z
 \right).
\label{eq:admissible-curvature-intro}
\end{align}
Therefore \(\widetilde\kappa\equiv c\) is equivalent to
\[
 cQ_{\mathsf A}(z)
 +\lambda\left(
 z^{\mathsf T}\mathsf A z
 +\overline{z^{\mathsf T}\mathsf A z}
 +4z^{\mathsf T}\mathsf A\overline{\mathsf A}\,\bar z
 \right)
 =0.
\]
Replacing \(z\) by \(e^{\sqrt{-1}\vartheta}z\) and comparing the
coefficients of \(e^{2\sqrt{-1}\vartheta}\),
\(e^{-2\sqrt{-1}\vartheta}\), and \(1\), we obtain
\[
 (\lambda+c)\mathsf A=0,
 \qquad
 4\lambda\mathsf A\overline{\mathsf A}+cI=0.
\]
If \(\mathsf A\neq0\), the first identity gives \(c=-\lambda\), and
the second gives
\[
 \mathsf A\overline{\mathsf A}=\frac14I,
\]
contrary to \eqref{eq:A-positivity}.  Hence
\(\mathsf A=0\), and the second identity gives \(c=0\).

Finally, \(Q_{\mathsf A}(z)=|z|^2\), and
\eqref{eq:metric-admissible-proof} becomes
\[
 q^*(p^*h)
 =
 \frac{1}{\lambda|z|^2}g_0.
\]
Thus \(p^*h\) is the standard Hopf metric up to scaling.  This proves
the globally constant assertion in
Theorem~\ref{thm:main-classification}.

\section{The two-parameter canonical connections}
\label{sec:two-parameter}

The proof is similar to the Gauduchon case.  We only give the
necessary changes.  Set
\begin{equation}\label{eq:two-chi-a}
 \chi(t,s)
 :=
 \bigl(1-t(1-s)\bigr)^2+s^2
 =
 (1-t+ts)^2+s^2,
 \qquad
 a_{t,s}:=\frac{\chi(t,s)}2.
\end{equation}
Thus \(a_{t,s}\geq0\), and \(a_{t,s}=0\) if and only if
\((t,s)=(1,0)\).

Assume that \(H_h^D\) is pointwise constant with curvature function
\(\kappa\), and put \(\widetilde\kappa:=\pi^*\kappa\).

\subsection{The conformal identity}

Retain the notation \(\widetilde h=e^{2f}g\) from
Subsection~\ref{subsec:cover}.  Since \(g\) is K\"ahler, the conformal
formula for \(\mathcal D_s^t\) in \cite{ChenNie} is obtained from
\eqref{eq:CN410} by replacing
\[
 \widehat{\widetilde R}^{\,t}
 \quad\text{with}\quad
 \widehat{\widetilde R}^{\,D},
 \qquad
 (1-t)^2
 \quad\text{with}\quad
 \chi(t,s).
\]
Define
\begin{equation}\label{eq:two-A-beta-ts}
 \mathcal A_{i\bar j}
 :=
 f_{i\bar j}-a_{t,s}f_i f_{\bar j},
 \qquad
 \beta_{t,s}
 :=
 e^{2f}\widetilde\kappa
 +2a_{t,s}|\partial f|_g^2.
\end{equation}
By polarization, the conformal formula gives
\begin{equation}\label{eq:two-curvature-identity}
 R^g
 =
 \frac{\beta_{t,s}}2G_g
 +\frac12L_g(\mathcal A).
\end{equation}
The K\"ahler curvature decomposition and the trace-free Ricci
contraction give
\begin{equation}\label{eq:two-bochner-ricci}
 \Boch(g)=0,
 \qquad
 (\Ric^g)^\circ
 =
 \frac{n+2}{2}\mathcal A^\circ.
\end{equation}
It follows from \eqref{eq:TV-equivalence} that
\(B^{\mathrm{TV}}(h)=0\).

\subsection{Proof of the classification}

If the Lee form is exact, the argument in
Section~\ref{sec:gck} applies with
\[
 a,\quad A,\quad\beta_t
 \qquad\text{replaced by}\qquad
 a_{t,s},\quad\mathcal A,\quad\beta_{t,s}.
\]
It shows that the conformal function \(f\) is constant.  Hence \(h\)
is K\"ahler.  The last part of Section~\ref{sec:gck} also shows that
\(\kappa\) is constant and that \((M,h)\) is a complex space form.

Assume now that the Lee form is not exact.  By
\eqref{eq:two-bochner-ricci}, \eqref{eq:TV-equivalence}, and
Proposition~\ref{prop:uniformization}, we may use the Euclidean cover
\[
 (\widetilde M,g)
 \cong
 (\C^n\setminus\{0\},g_0)
\]
and choose a deck contraction
\[
 \gamma(z)=rUz,
 \qquad
 0<r<1,
 \qquad
 U\in U(n),
\]
such that
\[
 f(rUz)=f(z)-\log r.
\]
Since \(R^{g_0}=0\), equation
\eqref{eq:two-curvature-identity} and
Lemma~\ref{lem:L-injective} give
\begin{equation}\label{eq:two-A-flat}
 \mathcal A
 =
 -\frac{\beta_{t,s}}2g_0.
\end{equation}

From this point, the argument in
Subsections~\ref{subsec:exclude-chern}--\ref{subsec:hopf} applies with
\[
 a,\qquad A,\qquad\beta_t
 \quad\text{replaced by}\quad
 a_{t,s},\qquad\mathcal A,\qquad\beta_{t,s}.
\]
We indicate the step that determines the parameters.  If
\(a_{t,s}=0\), then \((t,s)=(1,0)\), and
Subsection~\ref{subsec:exclude-chern} gives a contradiction.  Hence
\(a_{t,s}>0\).

Put
\[
 \xi:=e^{-a_{t,s}f}.
\]
Equation \eqref{eq:two-A-flat} gives
\[
 \xi_{i\bar j}
 =
 \lambda\delta_{ij},
 \qquad
 \lambda
 :=
 \frac{a_{t,s}\beta_{t,s}}2\xi.
\]
By Lemma~\ref{lem:constant-lambda}, \(\lambda\) is constant.  Moreover,
\[
 \xi(rUz)=r^{a_{t,s}}\xi(z).
\]
The same positive pluriharmonic argument as in
Subsection~\ref{subsec:flat} shows that \(\lambda\neq0\).  Taking the
complex Hessian of the last identity gives
\[
 \bigl(r^2-r^{a_{t,s}}\bigr)\lambda=0.
\]
Since \(0<r<1\), we obtain
\begin{equation}\label{eq:two-exceptional-proof}
 a_{t,s}=2,
 \qquad
 \chi(t,s)=4.
\end{equation}
For later use, the definition of \(\beta_{t,s}\) also gives
\begin{equation}\label{eq:two-lambda-expanded}
 \lambda
 =
 \frac{a_{t,s}}2\widetilde\kappa\,
 \xi^{\,1-2/a_{t,s}}
 +\frac{|\partial\xi|_{g_0}^2}{\xi}.
\end{equation}

When \(a_{t,s}=2\), the equations for \(\xi\) are exactly those used
in Subsection~\ref{subsec:hopf}.  The argument there gives a
holomorphic covering \(p:M_D\to M\), a constant \(\lambda>0\), and a
complex matrix \(\mathsf A\) satisfying
\[
 \xi=\lambda Q_{\mathsf A},
 \qquad
 \mathsf A^{\mathsf T}=\mathsf A,
 \qquad
 \mathsf A\overline{\mathsf A}<\frac14I,
 \qquad
 D^{\mathsf T}\mathsf A D=r^2\mathsf A.
\]
Furthermore,
\begin{equation}\label{eq:two-admissible-metric}
 q_D^*(p^*h)
 =
 \frac{1/\lambda}{Q_{\mathsf A}(z)}g_0.
\end{equation}
Thus \(p^*h\) is admissible.  This proves the pointwise constant part
of Theorem~\ref{thm:two-parameter-classification}.

Conversely, if \(\chi(t,s)=4\), every admissible metric on \(M_D\)
has pointwise constant \(\mathcal D_s^t\)-holomorphic sectional
curvature \cite{ChenNie}.

\subsection{The globally constant two-parameter case}

Suppose that \(\kappa\equiv c\).  The exact Lee-form case was settled
above.  If the Lee form is not exact, then \(a_{t,s}=2\), and
\eqref{eq:two-lambda-expanded} becomes
\[
 c
 =
 \lambda-\frac{|\partial\xi|_{g_0}^2}{\xi}.
\]
Substituting \(\xi=\lambda Q_{\mathsf A}\) and using
\eqref{eq:admissible-curvature-intro}, the polynomial comparison in
Subsection~\ref{subsec:global-constant} gives
\[
 (\lambda+c)\mathsf A=0,
 \qquad
 4\lambda\mathsf A\overline{\mathsf A}+cI=0.
\]
If \(\mathsf A\neq0\), these identities give
\[
 \mathsf A\overline{\mathsf A}=\frac14I,
\]
contrary to the admissibility condition.  Hence
\(\mathsf A=0\) and \(c=0\).  Equation
\eqref{eq:two-admissible-metric} becomes
\[
 q_D^*(p^*h)
 =
 \frac{1}{\lambda|z|^2}g_0.
\]
Thus \(p^*h\) is the standard Hopf metric up to scaling.  This proves
the globally constant assertion in
Theorem~\ref{thm:two-parameter-classification}.

\section{Chern curvature on balanced threefolds}
\label{sec:balanced-curvature}

\subsection{Chern torsion and Chern--Ricci contractions}

Throughout Sections~\ref{sec:balanced-curvature} and
\ref{sec:balanced-torsion}, \(g\) denotes the balanced Hermitian
metric, not the K\"ahler metric on the LCK cover used above.  We put
\(\nabla=\nabla^c\).  Thus \(R^c\) and \(T\) denote the Chern curvature
and torsion of \(g\), as in Section~\ref{sec:preliminaries}.
Let $\{e_i\}_{i=1}^n$ be a local \(g\)-unitary frame of
$T^{1,0}M$, with dual coframe $\{\varphi^i\}$.  On $T^{1,0}M$, set
\[
  \ip{X}{Y}=g(X,\overline Y).
\]
This is complex linear in the first argument and conjugate linear in the second; the same notation will be used for the induced Hermitian contractions on tensor bundles.  We use
\[
  \omega=\ii\sum_i\varphi^i\wedge\overline{\varphi^i}.
\]
We write $\dd V=\omega^n/n!$.  The operator $\Lambda_\omega$ is the metric contraction adjoint to exterior multiplication by $\omega$, and $*$ denotes the Hodge star of $g$.

For a tensor \(A\), we write \(\nabla_{e_s}A\) and
\(\nabla_{\bar e_s}A\) for its covariant derivatives as a tensor.
When local components are displayed, a comma followed by an index
denotes Chern covariant differentiation.  In particular,
\[
  T^j_{ik,s}:=(\nabla_{e_s}T)^j_{ik},
  \qquad
  T^j_{ik,\bar s}:=(\nabla_{\bar e_s}T)^j_{ik}.
\]
The same comma convention will be used for the components of other
tensors.
For tensors \(A\) and \(B\) of the same type, metric compatibility
and conjugate linearity in the second slot give
\begin{equation}\label{eq:complex-metric-compatibility}
  e_s\ip{A}{B}
  =\ip{\nabla_{e_s}A}{B}+\ip{A}{\nabla_{\bar e_s}B}.
\end{equation}

We also write
\[
  \nabla^{1,0}T=\sum_s\varphi^s\otimes\nabla_{e_s}T,
  \qquad
  \nabla^{0,1}T
  =\sum_s\overline{\varphi^s}\otimes\nabla_{\bar e_s}T.
\]
Their ordered-index norms are
\begin{equation}\label{eq:norm-conventions}
\begin{aligned}
  |T|^2&=\sum_{i,k,j}|T^j_{ik}|^2,\\
  |\nabla^{0,1}T|^2&=\sum_{s,i,k,j}|T^j_{ik,\bar s}|^2,
  \qquad
  |\nabla^{1,0}T|^2=\sum_{s,i,k,j}|T^j_{ik,s}|^2.
\end{aligned}
\end{equation}
The torsion one-form has components
\(\eta_i=\sum_sT^s_{si}\).  The metric is balanced precisely when
\(\eta=0\) \cite{GauduchonTorsion,ZhouZheng}.  Hence
\begin{equation}\label{eq:balanced-trace}
  \sum_sT^s_{si}=\sum_sT^s_{is}=0.
\end{equation}

The Chern--Ricci coefficient tensors used below are
\begin{equation}\label{eq:ricci}
  \rhoone_{i\bar j}=\sum_kR^c_{i\bar j k\bar k},
  \qquad
  \rhotwo_{i\bar j}=\sum_kR^c_{k\bar k i\bar j},
  \qquad
  \rhothree_{i\bar j}=\sum_kR^c_{i\bar k k\bar j}.
\end{equation}
For a complex \((1,1)\)-tensor \(H\), write
\[
 H_{i\bar j}:=H(e_i,\bar e_j).
\]
We use the same symbol \(H\) for the associated complex
\((1,1)\)-form
\[
 \sqrt{-1}\sum_{i,j=1}^n
 H_{i\bar j}\varphi^i\wedge\overline{\varphi^j}.
\]
This form is real if and only if
\[
 H_{i\bar j}=\overline{H_{j\bar i}},
\]
that is, if and only if \(H\) is Hermitian.  The forms
\(\rhoone\) and \(\rhotwo\) are real, whereas
\(\rhothree\) need not be real.  For any such tensor \(H\), define
\[
 (H^*)_{i\bar j}:=\overline{H_{j\bar i}},
 \qquad
 \operatorname{Re}H:=\frac12(H+H^*),
 \qquad
 \tr_\omega H:=\sum_{i=1}^nH_{i\bar i}.
\]
Put
\[
 s^{(1)}:=\tr_\omega\rhoone,
 \qquad
 s^{(3)}:=\tr_\omega\rhothree.
\]
With our curvature convention, the first Chern--Bianchi identities in
\cite{ChenChenNie} take the form
\begin{align}
 R^c_{i\bar j k\bar\ell}-R^c_{k\bar j i\bar\ell}
 &=-T^\ell_{ik,\bar j},
 \label{eq:bianchi-one}\\
 R^c_{i\bar j k\bar\ell}-R^c_{i\bar\ell k\bar j}
 &=-\overline{T^k_{j\ell,\bar i}}.
 \label{eq:bianchi-two}
\end{align}  Contracting
\eqref{eq:bianchi-two} in \(k,\ell\) gives
\begin{equation}\label{eq:rho13-direct}
  (\rhoone-\rhothree)_{i\bar j}
  =-\sum_k\overline{T^k_{jk,\bar i}}=0,
\end{equation}
because $\sum_kT^k_{jk}=-\sum_kT^k_{kj}=0$ is the balanced torsion-trace identity and its covariant derivative also vanishes.  Therefore
\begin{equation}\label{eq:balanced-rho3}
  \rhothree=\rhoone,
  \qquad s^{(1)}=s^{(3)}.
\end{equation}

Assume now that \(H_g^c\equiv c\).  By
\eqref{eq:pointwise-hsc-h}, this is equivalent to
\begin{equation}\label{eq:polarization}
 R^c_{i\bar j k\bar\ell}+R^c_{k\bar j i\bar\ell}
 +R^c_{i\bar\ell k\bar j}+R^c_{k\bar\ell i\bar j}
 =2c(\delta_{ij}\delta_{k\ell}+\delta_{i\ell}\delta_{kj}).
\end{equation}
Contracting \(k\) and \(\ell\) in \eqref{eq:polarization}, and then
taking the trace, gives
\begin{equation}\label{eq:polarized-contractions}
 \rhoone+\rhotwo+2\operatorname{Re}\rhothree
 =2(n+1)c\omega,
 \qquad
 s^{(1)}+s^{(3)}=n(n+1)c.
\end{equation}
Here \(s^{(3)}\) is real, since the Hermitian symmetry of \(R^c\)
gives
\[
 \overline{s^{(3)}}
 =
 \sum_{i,k=1}^n\overline{R^c_{i\bar k k\bar i}}
 =
 \sum_{i,k=1}^nR^c_{k\bar i i\bar k}
 =
 s^{(3)}.
\]

From now on, \(n=3\).  Equations \eqref{eq:balanced-rho3} and
\eqref{eq:polarized-contractions} give
\begin{equation}\label{eq:balanced-contractions}
  s^{(1)}=s^{(3)}=6c,
  \qquad
  3\rhoone+\rhotwo=8c\omega.
\end{equation}
Define
\begin{equation}\label{eq:sigma}
  \sigma=\rhoone-2c\omega.
\end{equation}
We use \( |\sigma|^2=\sum_{i,j}|\sigma_{i\bar j}|^2\).
Then $\sigma$ is a primitive real $(1,1)$-form and
\begin{equation}\label{eq:rho-sigma}
  \Lambda_\omega\sigma=0,
  \qquad
  \rhoone=2c\omega+\sigma,
  \qquad
  \rhotwo=2c\omega-3\sigma.
\end{equation}

\subsection{Curvature formulas in complex dimension three}
\label{subsec:balanced-reconstruction}

Following \cite{MaNie}, let \(N\) denote the tensor given by
\[
 N(X,Y,\bar Z,\bar W)
 =
 \ip{(\nabla_{\bar Z}T)(X,Y)}{W}
 -
 \ip{(\nabla_{\bar W}T)(X,Y)}{Z},
\]
where \(X,Y,Z,W\in T^{1,0}M\).  In a local \(g\)-unitary frame
\(\{e_i\}_{i=1}^3\), its components are
\begin{equation}\label{eq:N}
 N_{ik\bar j\bar\ell}
 =
 T^\ell_{ik,\bar j}-T^j_{ik,\bar\ell}.
\end{equation}
Thus \(N\) is skew-symmetric in \(i,k\) and in \(j,\ell\).  We write
\[
 |N|^2
 =
 \sum_{i,k,j,\ell=1}^3
 |N_{ik\bar j\bar\ell}|^2.
\]

The polarization identity and the Chern--Bianchi identities give the
following formula.

\begin{lemma}\label{lem:obstruction}
For a balanced threefold with \(H_g^c\equiv c\), one has
\begin{equation}\label{eq:obstruction}
R^c_{i\bar j k\bar\ell}
+
R^c_{k\bar\ell i\bar j}
=
c\bigl(
\delta_{ij}\delta_{k\ell}
+
\delta_{i\ell}\delta_{kj}
\bigr)
-
\frac{1}{2}N_{ik\bar j\bar\ell}.
\end{equation}
\end{lemma}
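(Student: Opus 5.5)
The identity \eqref{eq:obstruction} should follow from the polarized condition \eqref{eq:polarization} together with the first Chern--Bianchi identity \eqref{eq:bianchi-one}. The idea is to eliminate the two ``mixed'' terms of the symmetrization in favor of the two terms on the left of \eqref{eq:obstruction}. Neither the balanced condition nor \(n=3\) should actually be needed for this step. Both enter only later, through \eqref{eq:balanced-rho3} and the definition of \(N\) in dimension three.

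First, rewrite \(R^c_{k\bar j i\bar\ell}\). Applying \eqref{eq:bianchi-one} directly gives
\[
R^c_{k\bar j i\bar\ell}=R^c_{i\bar j k\bar\ell}+T^\ell_{ik,\bar j}.
\]
Second, rewrite \(R^c_{i\bar\ell k\bar j}\). Applying \eqref{eq:bianchi-one} with the roles of \(j\) and \(\ell\) interchanged gives
\[
R^c_{i\bar\ell k\bar j}-R^c_{k\bar\ell i\bar j}=-T^j_{ik,\bar\ell},
\qquad\text{so}\qquad
R^c_{i\bar\ell k\bar j}=R^c_{k\bar\ell i\bar j}-T^j_{ik,\bar\ell}.
\]
Substituting both expressions into the left-hand side of \eqref{eq:polarization} yields
\[
2R^c_{i\bar j k\bar\ell}+2R^c_{k\bar\ell i\bar j}
+\bigl(T^\ell_{ik,\bar j}-T^j_{ik,\bar\ell}\bigr)
=2c\bigl(\delta_{ij}\delta_{k\ell}+\delta_{i\ell}\delta_{kj}\bigr).
\]
By \eqref{eq:N}, the bracket is exactly \(N_{ik\bar j\bar\ell}\). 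Dividing by \(2\) then gives \eqref{eq:obstruction}.

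The only real obstacle is bookkeeping of conventions. One has to check that \eqref{eq:bianchi-one}, under the index relabeling \(j\leftrightarrow\ell\), produces \(T^j_{ik,\bar\ell}\) with the sign used in \eqref{eq:N}. One also has to confirm that the ordering of the index pairs in \eqref{eq:polarization} matches \eqref{eq:symmetrization}. I would verify this by writing the four terms of \eqref{eq:polarization} explicitly and matching each substitution term by term.

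As a consistency check, the result can be contracted in \(k,\ell\). This should reproduce \eqref{eq:polarized-contractions}, using \(\sum_k N_{ik\bar j\bar k}=0\), which follows from the balanced trace identity \eqref{eq:balanced-trace}.
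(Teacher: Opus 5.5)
Your argument is correct and is exactly the paper's proof: rewrite $R^c_{k\bar j i\bar\ell}$ and $R^c_{i\bar\ell k\bar j}$ via \eqref{eq:bianchi-one} (the second time with $j\leftrightarrow\ell$), substitute into \eqref{eq:polarization}, identify the leftover bracket as $N_{ik\bar j\bar\ell}$, and divide by $2$. One correction to your closing consistency check, which is not needed for the proof: balancedness kills only $\sum_k T^k_{ik,\bar j}$, so $\sum_k N_{ik\bar j\bar k}=-\sum_k T^j_{ik,\bar k}=-\Pi_{i\bar j}$, which is not zero in general. Consequently, contracting \eqref{eq:obstruction} gives $\rho^{(1)}+\rho^{(2)}=4c\omega+\frac12\Pi$ rather than reproducing \eqref{eq:polarized-contractions}. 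Comparing these two contractions is exactly how the paper obtains $\Pi=-4\sigma$, whereas assuming $\sum_k N_{ik\bar j\bar k}=0$ would wrongly force $\sigma=0$ pointwise.
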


\begin{proof}
By \eqref{eq:bianchi-one},
\[
R^c_{k\bar j i\bar\ell}
=
R^c_{i\bar j k\bar\ell}
+
T^\ell_{ik,\bar j},
\]
and
\[
R^c_{i\bar\ell k\bar j}
=
R^c_{k\bar\ell i\bar j}
-
T^j_{ik,\bar\ell}.
\]
Therefore,
\begin{align*}
&
R^c_{i\bar j k\bar\ell}
+
R^c_{k\bar j i\bar\ell}
+
R^c_{i\bar\ell k\bar j}
+
R^c_{k\bar\ell i\bar j}
\\
&=
2\bigl(
R^c_{i\bar j k\bar\ell}
+
R^c_{k\bar\ell i\bar j}
\bigr)
+
T^\ell_{ik,\bar j}
-
T^j_{ik,\bar\ell}
\\
&=
2\bigl(
R^c_{i\bar j k\bar\ell}
+
R^c_{k\bar\ell i\bar j}
\bigr)
+
N_{ik\bar j\bar\ell}.
\end{align*}
Comparing this with \eqref{eq:polarization} and dividing by \(2\)
gives \eqref{eq:obstruction}.
\end{proof}
Let
\begin{equation}\label{eq:balanced-Pi}
  \Pi_{i\bar j}=\sum_sT^j_{is,\bar s}.
\end{equation}
Contracting \eqref{eq:obstruction} in $k,\ell$ gives $\rhoone+\rhotwo$ on the left.  On the other hand, balancedness yields
\[
 \sum_sN_{is\bar j\bar s}
 =\sum_s\bigl(T^s_{is,\bar j}-T^j_{is,\bar s}\bigr)
 =-\sum_sT^j_{is,\bar s}
 =-\Pi_{i\bar j}.
\]
Thus
\[
  \rhoone+\rhotwo=4c\omega+\frac12\Pi.
\]
Comparison with \eqref{eq:rho-sigma} gives
\begin{equation}\label{eq:Pi-sigma}
  \Pi=-4\sigma.
\end{equation}

The next elementary fact is the point at which complex dimension three is essential.

\begin{proposition}\label{prop:N-reconstruction}
Let $(M^3,g)$ be a balanced threefold with
$H_g^c\equiv c$. Then
\begin{equation}\label{eq:N-sigma}
 N_{ik\bar j\bar\ell}
 =4\bigl(
 \delta_{ij}\sigma_{k\bar\ell}-\delta_{i\ell}\sigma_{k\bar j}
 -\delta_{kj}\sigma_{i\bar\ell}+\delta_{k\ell}\sigma_{i\bar j}
 \bigr).
\end{equation}
In particular,
\begin{equation}\label{eq:N-norm}
N=0
\quad\Longleftrightarrow\quad
\sigma=0,
\qquad
|N|^2=64|\sigma|^2.
\end{equation}
\end{proposition}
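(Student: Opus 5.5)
The approach is linear algebra at a point, using that in complex dimension three a skew pair of indices is dual to a single index. Fix a point and a $g$-unitary frame $\{e_i\}_{i=1}^3$, and let $\epsilon_{ikp}$ be the Levi-Civita symbol. By \eqref{eq:N}, $N_{ik\bar j\bar\ell}$ is skew in $(i,k)$ and in $(j,\ell)$, and $\Lambda^2$ of a $3$-dimensional space is $3$-dimensional. So I would define
\[
 M_{p\bar q}:=\frac14\sum_{i,k,j,\ell}\epsilon_{ikp}\,\epsilon_{j\ell q}\,N_{ik\bar j\bar\ell}.
\]
Using $\sum_p\epsilon_{ikp}\epsilon_{i'k'p}=\delta_{ii'}\delta_{kk'}-\delta_{ik'}\delta_{ki'}$ and the double skew-symmetry, this inverts to
\[
 N_{ik\bar j\bar\ell}=\sum_{p,q}\epsilon_{ikp}\,\epsilon_{j\ell q}\,M_{p\bar q}.
\]
Thus $N$ carries exactly nine complex numbers, the same as a $(1,1)$-tensor. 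The whole point is that its $(k,\ell)$-contraction, which is already known, recovers it.

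The key step is to compute that contraction. Using $\sum_s\epsilon_{isp}\epsilon_{jsq}=\delta_{ij}\delta_{pq}-\delta_{iq}\delta_{pj}$, one gets
\[
 \sum_sN_{is\bar j\bar s}=\delta_{ij}\tr M-M_{j\bar i}.
\]
Earlier in the paper, balancedness gave $\sum_sN_{is\bar j\bar s}=-\Pi_{i\bar j}$, and \eqref{eq:Pi-sigma} gives $\Pi=-4\sigma$. Hence
\[
 \delta_{ij}\tr M-M_{j\bar i}=4\sigma_{i\bar j}.
\]
Since $\sigma$ is primitive by \eqref{eq:rho-sigma}, tracing gives $3\tr M-\tr M=0$, so $\tr M=0$. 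Therefore $M_{j\bar i}=-4\sigma_{i\bar j}$, i.e. $M$ is $-4$ times the transpose of $\sigma$.

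Next I would substitute back. Expand $\epsilon_{ikp}\epsilon_{j\ell q}$ as the $3\times3$ determinant of Kronecker deltas, which has six terms. Contracting against $M_{p\bar q}$, the terms containing $\delta_{pq}$ produce $\tr M$ and vanish. The four surviving terms give
\[
 N_{ik\bar j\bar\ell}=\delta_{ij}M_{\ell\bar k}-\delta_{i\ell}M_{j\bar k}-\delta_{kj}M_{\ell\bar i}+\delta_{k\ell}M_{j\bar i}\quad(\text{up to the transposition convention}).
\]
Inserting $M_{\ell\bar k}=-4\sigma_{k\bar\ell}$ and its analogues should give \eqref{eq:N-sigma}. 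I expect the main obstacle to be exactly this bookkeeping: signs and the transpose/conjugation convention. As an independent check I would verify the final formula directly. Its right-hand side is skew in $(i,k)$ and in $(j,\ell)$, and its $(k,\ell)$-contraction is $4(\sigma_{i\bar j}-3\sigma_{i\bar j}+\sigma_{i\bar j}\cdot 0\ldots)$; concretely, using $\tr\sigma=0$, it equals $-4\sigma_{i\bar j}\cdot(3-2)=4\sigma_{i\bar j}$ with the sign fixed to match $-\Pi$. Combined with injectivity of the contraction on trace-free $M$ (shown above), this verification pins the formula down.

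Finally, the norm. Using $\sum\epsilon_{ikp}\epsilon_{ikp'}=2\delta_{pp'}$ in each pair,
\[
 |N|^2=\sum|\textstyle\sum\epsilon\epsilon M|^2=4|M|^2=4\cdot16|\sigma|^2=64|\sigma|^2.
\]
The equivalence $N=0\Leftrightarrow\sigma=0$ in \eqref{eq:N-norm} follows immediately from this identity.
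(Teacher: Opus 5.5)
Your proposal follows essentially the same route as the paper. You dualize both skew pairs with $\epsilon$ to a $3\times3$ matrix $M$ and solve for $M$ from the known contraction: $\Pi=-4\sigma$ gives $\tr M=0$ and $M_{j\bar i}=-4\sigma_{i\bar j}$, which is the paper's $M_{a\bar b}=-4\sigma_{b\bar a}$. You then substitute back and get $|N|^2=4|M|^2=64|\sigma|^2$. You contract $\sum_sN_{is\bar j\bar s}$ where the paper uses $C_{i\bar j}=\sum_sN_{is\bar s\bar j}$, which only changes a sign.

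There is one concrete slip: your four-term expansion has the wrong overall sign. Expanding the determinant and dropping the $\tr M$ terms gives
\[
 N_{ik\bar j\bar\ell}
 =-\delta_{ij}M_{\ell\bar k}+\delta_{i\ell}M_{j\bar k}
  +\delta_{kj}M_{\ell\bar i}-\delta_{k\ell}M_{j\bar i}.
\]
You can confirm this against your own contraction identity, which forces $\sum_sN_{is\bar j\bar s}=-M_{j\bar i}$ when $\tr M=0$; your version contracts to $+M_{j\bar i}$. With your sign, inserting $M_{\ell\bar k}=-4\sigma_{k\bar\ell}$ gives the negative of \eqref{eq:N-sigma}. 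With the correct sign it gives \eqref{eq:N-sigma} exactly.

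The arithmetic in your check is also garbled. The $(k,\ell)$-contraction of the right-hand side of \eqref{eq:N-sigma} is $4(\delta_{ij}\tr\sigma-\sigma_{i\bar j}-\sigma_{i\bar j}+3\sigma_{i\bar j})=4\sigma_{i\bar j}$, with no sign to be ``fixed''.

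Done correctly, that check is a complete proof on its own. The map $M\mapsto(\delta_{ij}\tr M-M_{j\bar i})$ is injective on all $M$, not only trace-free ones, since its trace is $2\tr M$. So a doubly skew tensor in dimension three is determined by its $(k,\ell)$-contraction. The right-hand side of \eqref{eq:N-sigma} is skew in $(i,k)$ and in $(j,\ell)$, and its contraction equals $4\sigma_{i\bar j}=\sum_sN_{is\bar j\bar s}$.
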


\begin{proof}
Fix a point and a unitary frame. Let $\varepsilon_{ijk}$ be the
alternating symbol with $\varepsilon_{123}=1$. Recall that
\begin{equation}\label{eq:epsilon-basic-contraction}
\sum_{a=1}^3
\varepsilon_{ika}\varepsilon_{apq}
=
\delta_{ip}\delta_{kq}
-
\delta_{iq}\delta_{kp}.
\end{equation}

Define
\begin{equation}\label{eq:M-inverse}
M_{a\bar b}
=
\frac14
\sum_{p,q,r,v=1}^3
\varepsilon_{apq}
\overline{\varepsilon_{brv}}
N_{pq\bar r\bar v}.
\end{equation}
Indeed, substituting \eqref{eq:M-inverse} and contracting the
$a$- and $b$-indices, we obtain
\begin{align*}
\sum_{a,b}
\varepsilon_{ika}\overline{\varepsilon_{j\ell b}}M_{a\bar b}
&=
\frac14
\sum_{p,q,r,v}
\left(
\sum_a\varepsilon_{ika}\varepsilon_{apq}
\right)
\left(
\sum_b\overline{\varepsilon_{j\ell b}}
       \overline{\varepsilon_{brv}}
\right)
N_{pq\bar r\bar v} \\
&=
\frac14
\sum_{p,q,r,v}
(\delta_{ip}\delta_{kq}-\delta_{iq}\delta_{kp})
(\delta_{jr}\delta_{\ell v}-\delta_{jv}\delta_{\ell r})
N_{pq\bar r\bar v} \\
&=
N_{ik\bar j\bar\ell},
\end{align*}
where the last equality follows from the skew-symmetry of $N$ in
both index pairs. Hence
\begin{equation}\label{eq:N-M-explicit}
N_{ik\bar j\bar\ell}
=
\sum_{a,b=1}^3
\varepsilon_{ika}
\overline{\varepsilon_{j\ell b}}
M_{a\bar b}.
\end{equation}

Set
\[
C_{i\bar j}
=
\sum_s N_{is\bar s\bar j}.
\]
Since
\[
\sum_s
\varepsilon_{isa}
\overline{\varepsilon_{sjb}}
=
\delta_{ib}\delta_{aj}
-
\delta_{ij}\delta_{ab},
\]
Using \eqref{eq:N-M-explicit} and the preceding contraction identity,
we obtain
\begin{equation}\label{eq:C-M}
 C_{i\bar j}
 =\sum_{s,a,b=1}^3
  \varepsilon_{isa}\overline{\varepsilon_{sjb}}M_{a\bar b}
 =M_{j\bar i}-(\operatorname{tr}M)\delta_{ij}.
\end{equation}
Taking the trace gives
\[
 \operatorname{tr}C=-2\operatorname{tr}M.
\]
Therefore,
\begin{equation}\label{eq:M-from-C}
 M_{a\bar b}
 =
 C_{b\bar a}
 -
 \frac12(\operatorname{tr}C)\delta_{ab}.
\end{equation}

Since \(g\) is balanced,
\[
 \sum_{s=1}^3T^s_{is}=0.
\]
Taking the Chern covariant derivative in the \(\bar e_j\)-direction
gives
\[
 \sum_{s=1}^3T^s_{is,\bar j}=0.
\]
Hence, by \eqref{eq:N}, the definition of \(\Pi\), and
\eqref{eq:Pi-sigma},
\[
 C_{i\bar j}
 =\sum_{s=1}^3N_{is\bar s\bar j}
 =\sum_{s=1}^3\left(T^j_{is,\bar s}-T^s_{is,\bar j}\right)
 =\Pi_{i\bar j}
 =-4\sigma_{i\bar j}.
\]
Since \(\operatorname{tr}\sigma=0\), one also has
\(\operatorname{tr}C=0\).  Therefore, \eqref{eq:M-from-C} gives
\begin{equation}\label{eq:M-sigma}
 M_{a\bar b}=-4\sigma_{b\bar a}.
\end{equation}

Substituting \eqref{eq:M-sigma} into
\eqref{eq:N-M-explicit}, we obtain
\[
 N_{ik\bar j\bar\ell}
 =
 -4\sum_{a,b=1}^3
 \varepsilon_{ika}
 \overline{\varepsilon_{j\ell b}}
 \sigma_{b\bar a}.
\]
A direct computation, using \(\operatorname{tr}\sigma=0\), gives
\[
 \sum_{a,b=1}^3
 \varepsilon_{ika}\overline{\varepsilon_{j\ell b}}\sigma_{b\bar a}
 =-\delta_{ij}\sigma_{k\bar\ell}+\delta_{i\ell}\sigma_{k\bar j}
  +\delta_{kj}\sigma_{i\bar\ell}-\delta_{k\ell}\sigma_{i\bar j}.
\]
This proves \eqref{eq:N-sigma}.

Finally, the contraction identities
\[
 \sum_{i,k=1}^3
 \varepsilon_{ika}\overline{\varepsilon_{ikc}}
 =2\delta_{ac},
 \qquad
 \sum_{j,\ell=1}^3
 \overline{\varepsilon_{j\ell b}}\varepsilon_{j\ell d}
 =2\delta_{bd}
\]
and \eqref{eq:N-M-explicit} give
\[
 |N|^2=4|M|^2.
\]
By \eqref{eq:M-sigma}, we have
\[
 |M|^2=16|\sigma|^2.
\]
Thus
\[
 |N|^2=64|\sigma|^2,
\]
which proves \eqref{eq:N-norm}.
\end{proof}

We shall also use the following reconstruction formula.

\begin{lemma}\label{lem:full-reconstruction}
Let $(M^3,g)$ be a balanced threefold with
$H_g^c\equiv c$. Then
\begin{equation}\label{eq:full-reconstruction}
 2R^c_{i\bar j k\bar\ell}
 =c\bigl(\delta_{ij}\delta_{k\ell}+\delta_{i\ell}\delta_{kj}\bigr)
  -T^\ell_{ik,\bar j}-\overline{T^k_{j\ell,\bar i}}
  +\frac12N_{ik\bar j\bar\ell}.
\end{equation}
\end{lemma}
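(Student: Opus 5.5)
We derive \eqref{eq:full-reconstruction} from Lemma~\ref{lem:obstruction} and the two Chern--Bianchi identities \eqref{eq:bianchi-one}--\eqref{eq:bianchi-two}, by expressing $R^c_{k\bar\ell i\bar j}$ through $R^c_{i\bar j k\bar\ell}$ plus torsion-derivative terms. Lemma~\ref{lem:obstruction} controls only the sum $R^c_{i\bar j k\bar\ell}+R^c_{k\bar\ell i\bar j}$, so we need the difference $R^c_{k\bar\ell i\bar j}-R^c_{i\bar j k\bar\ell}$ in terms of $T$.

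First swap the unbarred indices with \eqref{eq:bianchi-one}, with the roles of $i$ and $k$ exchanged. This gives
\[
R^c_{k\bar\ell i\bar j}=R^c_{i\bar\ell k\bar j}-T^j_{ki,\bar\ell}=R^c_{i\bar\ell k\bar j}+T^j_{ik,\bar\ell},
\]
using the skew-symmetry $T^j_{ki}=-T^j_{ik}$, which is preserved by covariant differentiation. Next swap the barred indices with \eqref{eq:bianchi-two}:
\[
R^c_{i\bar\ell k\bar j}=R^c_{i\bar j k\bar\ell}+\overline{T^k_{j\ell,\bar i}}.
\]
Combining the two,
\[
R^c_{k\bar\ell i\bar j}=R^c_{i\bar j k\bar\ell}+T^j_{ik,\bar\ell}+\overline{T^k_{j\ell,\bar i}}.
\]

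Substitute this into the left side of \eqref{eq:obstruction}. The left side becomes $2R^c_{i\bar j k\bar\ell}+T^j_{ik,\bar\ell}+\overline{T^k_{j\ell,\bar i}}$, so
\[
2R^c_{i\bar j k\bar\ell}=c(\delta_{ij}\delta_{k\ell}+\delta_{i\ell}\delta_{kj})-T^j_{ik,\bar\ell}-\overline{T^k_{j\ell,\bar i}}-\tfrac12N_{ik\bar j\bar\ell}.
\]
Finally, rewrite $-T^j_{ik,\bar\ell}$ using the definition \eqref{eq:N}, which gives $T^j_{ik,\bar\ell}=T^\ell_{ik,\bar j}-N_{ik\bar j\bar\ell}$. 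Then
\[
-T^j_{ik,\bar\ell}-\tfrac12N_{ik\bar j\bar\ell}=-T^\ell_{ik,\bar j}+N_{ik\bar j\bar\ell}-\tfrac12N_{ik\bar j\bar\ell}=-T^\ell_{ik,\bar j}+\tfrac12 N_{ik\bar j\bar\ell},
\]
which is exactly \eqref{eq:full-reconstruction}.

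The computation is purely algebraic, and balancedness enters only through Lemma~\ref{lem:obstruction}. The only real obstacle is sign and index bookkeeping. In particular, one must check that the index roles in \eqref{eq:bianchi-two} match the substitution used, namely swapping $\bar j\leftrightarrow\bar\ell$ with the first slot equal to $i$. To guard against a slip, we will check the contraction in $k=\ell$. Using \eqref{eq:rho13-direct}, the balanced identity $\sum_s T^s_{is,\bar j}=0$, and $\sum_s N_{is\bar j\bar s}=-\Pi_{i\bar j}$, the trace of the right side should reproduce $2\rhoone=4c\omega+2\sigma$ via \eqref{eq:Pi-sigma}. That is, $4c\delta_{ij}-0-0-\tfrac12\Pi_{i\bar j}=4c\delta_{ij}+2\sigma_{i\bar j}$, which is consistent.
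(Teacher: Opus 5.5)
Your derivation is correct and is essentially the paper's proof. Both use \eqref{eq:bianchi-one} and \eqref{eq:bianchi-two} to obtain $R^c_{k\bar\ell i\bar j}=R^c_{i\bar j k\bar\ell}+T^j_{ik,\bar\ell}+\overline{T^k_{j\ell,\bar i}}$ and substitute this into Lemma~\ref{lem:obstruction}; the only difference is that the paper rewrites $T^j_{ik,\bar\ell}=T^\ell_{ik,\bar j}-N_{ik\bar j\bar\ell}$ before the substitution, while you do it afterwards. One small aside: neither the Bianchi identities nor the proof of Lemma~\ref{lem:obstruction} uses balancedness, so it does not actually enter anywhere, and the identity holds for any Hermitian metric with $H_g^c\equiv c$.
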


\begin{proof}
By \eqref{eq:bianchi-one}, \eqref{eq:bianchi-two}, and the definition of
$N$,
\[
R^c_{k\bar\ell i\bar j}
 =R^c_{i\bar\ell k\bar j}+T^j_{ik,\bar\ell}
 =R^c_{i\bar j k\bar\ell}
  +\overline{T^k_{j\ell,\bar i}}+T^\ell_{ik,\bar j}
  -N_{ik\bar j\bar\ell}.
\]
Substituting this into \eqref{eq:obstruction} gives
\[
2R^c_{i\bar j k\bar\ell}
=
c\bigl(
\delta_{ij}\delta_{k\ell}
+
\delta_{i\ell}\delta_{kj}
\bigr)
-
T^\ell_{ik,\bar j}
-
\overline{T^k_{j\ell,\bar i}}
+
\frac12N_{ik\bar j\bar\ell}.
\]
This proves \eqref{eq:full-reconstruction}.
\end{proof}

\section{Global torsion identities and the balanced classification}
\label{sec:balanced-torsion}

\subsection{The quadratic torsion term}

We recall the following pointwise torsion normal form, proved by
Zhou and Zheng \cite{ZhouZheng}.

\begin{lemma}\label{lem:torsion-frame}
Let $(M^3,g)$ be a balanced threefold. For each point
$p\in M$, there exists a unitary frame of $T^{1,0}_pM$ such that
\begin{equation}\label{eq:torsion-normal-form}
T^1_{23}=\tau_1,\qquad
T^2_{31}=\tau_2,\qquad
T^3_{12}=\tau_3,
\end{equation}
where $\tau_1,\tau_2,\tau_3\in\mathbb C$. By the skew-symmetry of
the Chern torsion,
\[
T^1_{32}=-\tau_1,\qquad
T^2_{13}=-\tau_2,\qquad
T^3_{21}=-\tau_3.
\]
All remaining torsion components vanish.
\end{lemma}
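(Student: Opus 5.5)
We would prove Lemma~\ref{lem:torsion-frame} by pure linear algebra at the point \(p\), using dimension three and balancedness. Write \(V=T^{1,0}_pM\) with its Hermitian inner product. The Chern torsion at \(p\) is a tensor \(T\in\Lambda^2V^*\otimes V\). Since \(\dim V=3\), the wedge product gives an isomorphism \(\Lambda^2V\cong V^*\otimes\Lambda^3V\), so \(\Lambda^2V^*\otimes V\cong \Lambda^3V^*\otimes V\otimes V\). Concretely, in a unitary frame we set
\[
 \Theta^{aj}:=\tfrac12\sum_{i,k}\varepsilon_{ika}T^j_{ik},
 \qquad\text{so that}\qquad
 T^j_{ik}=\sum_a\varepsilon_{ika}\Theta^{aj},
\]
in the same way that \(N\) was encoded by \(M\) in the proof of Proposition~\ref{prop:N-reconstruction}. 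Under a unitary change of frame \(U\), the matrix \(\Theta\) transforms as \(\Theta\mapsto \det(U)^{\pm1}\,U\Theta U^{\mathsf T}\), up to the obvious conjugations. Thus \(\Theta\) is a complex bilinear form twisted by a phase.

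First we check that balancedness makes \(\Theta\) symmetric. Compute \(\sum_sT^s_{si}=\sum_{s,a}\varepsilon_{sia}\Theta^{as}\). This is (up to sign) the \(i\)-th component of the antisymmetric part of \(\Theta\), since \(\sum_{a,s}\varepsilon_{isa}\Theta^{as}\) picks out \(\Theta^{as}-\Theta^{sa}\). By \eqref{eq:balanced-trace} it vanishes for every \(i\), so \(\Theta^{\mathsf T}=\Theta\).

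Next we diagonalize. By the Autonne--Takagi factorization, already used in Subsection~\ref{subsec:hopf}, there is a unitary \(W\) with \(W\Theta W^{\mathsf T}=\operatorname{diag}(\mu_1,\mu_2,\mu_3)\), where \(\mu_j\ge0\). The determinant phase produced by the transformation law only rescales all entries by a common unimodular factor, so the result stays diagonal (with complex entries). This is why the statement allows \(\tau_j\in\C\).

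In the corresponding frame, \(\Theta^{aj}=\tau_a\delta_{aj}\). Hence \(T^j_{ik}=\varepsilon_{ikj}\tau_j\), which gives \(T^1_{23}=\tau_1\), \(T^2_{31}=\tau_2\), \(T^3_{12}=\tau_3\), with the remaining components either the skew partners or zero.

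The only delicate point is getting the transformation law of \(\Theta\) under unitary frame changes exactly right: whether it carries \(U\) or \(\bar U\), and which power of \(\det U\) appears. This determines whether the Takagi form (\(U\Theta U^{\mathsf T}\)) or an ordinary unitary conjugation is the relevant normal form. We would verify it directly from \(\sum_{i,k,a}\varepsilon_{ika}U_{ii'}U_{kk'}U_{aa'}=\det U\,\varepsilon_{i'k'a'}\). Either way, a symmetric matrix admits the needed diagonal form up to an overall phase, so the conclusion holds. Alternatively, we can simply cite Zhou--Zheng \cite{ZhouZheng}.
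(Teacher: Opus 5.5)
Your proposal is correct. The paper gives no proof of this lemma and only cites Zhou--Zheng, so your argument supplies a complete, self-contained proof. Its steps are: encode $T$ at $p$ by the $3\times3$ matrix $\Theta$, observe that the balanced condition $\eta=0$ is exactly $\Theta^{\mathsf T}=\Theta$ (indeed $\eta_1=\Theta^{23}-\Theta^{32}$, and cyclically), and apply Autonne--Takagi. It uses the same tools the paper relies on elsewhere: the $\varepsilon$-encoding of $N$ by $M$ in Proposition~\ref{prop:N-reconstruction}, and the Takagi factorization in Subsection~\ref{subsec:hopf}.

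One correction to your ``delicate point'': the hedge ``either way'' is false as stated. If $\Theta$ transformed by unitary conjugation, symmetry alone would not give a diagonal form, because a complex symmetric matrix need not be normal. That case does not arise, however. For $e'_i=\sum_pU_{ip}e_p$ one has $T'^j_{ik}=\sum_{p,q,r}U_{ip}U_{kq}\overline{U_{jr}}T^r_{pq}$, and your determinant identity then gives
\[
\Theta'=(\det U)\,\overline U\,\Theta\,\overline U^{\mathsf T}.
\]
This is a congruence, as your identification $\Lambda^2V^*\otimes V\cong\Lambda^3V^*\otimes V\otimes V$ already predicts, since both indices of $\Theta$ are vector indices. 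Taking $U=\overline W$, with $W$ the Takagi unitary, gives $\Theta'=\overline{\det W}\,\operatorname{diag}(\mu_1,\mu_2,\mu_3)$, which finishes the proof.

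As a side remark, the scalar change $U=e^{\sqrt{-1}\phi}I$ multiplies $\Theta$ by $e^{\sqrt{-1}\phi}$. So you can even arrange $\tau_j=\mu_j\geq0$.
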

Following \cite{LiuYang}, define the Hermitian tensor
\begin{equation}\label{eq:Qt}
 \mathcal Q_{i\bar j}
 =
 \sum_{p,k=1}^3
 T^k_{ip}\overline{T^k_{jp}}.
\end{equation}
Its trace is
\begin{equation}\label{eq:Qt-trace}
 \Lambda_\omega\mathcal Q=|T|^2.
\end{equation}
Indeed, tracing \eqref{eq:Qt} gives
$\sum_{i,p,k}|T^k_{ip}|^2$, which is exactly the ordered-index norm fixed above.
Put
\[
  \Phi=\ii\partial\bar\partial\omega.
\]
Let \(\partial^*\) and \(\bar\partial^*\) denote the formal adjoints.
By \cite{LiuYang},
\[
  \rhotwo=\rhoone-\Lambda_\omega\Phi
  -(\partial\partial^*\omega+\bar\partial\bar\partial^*\omega)+\mathcal Q.
\]
Here the term written there as
$\ii\Lambda_\omega(\partial\bar\partial\omega)$ is
$\Lambda_\omega\Phi$ in our notation.  For a balanced metric,
$\partial^*\omega=\bar\partial^*\omega=0$, and hence
\begin{equation}\label{eq:ricci-comparison}
  \rhotwo=\rhoone-\Lambda_\omega\Phi+\mathcal Q.
\end{equation}
Assume for the rest of this subsection that \(H_g^c\equiv c\).
Together with \eqref{eq:rho-sigma}, \eqref{eq:ricci-comparison} gives
\begin{equation}\label{eq:Phi-contractions}
  \Lambda_\omega\Phi=4\sigma+\mathcal Q,
  \qquad
  \Lambda_\omega^2\Phi=|T|^2.
\end{equation}
Indeed, applying $\Lambda_\omega$ to the first equality and using
$\Lambda_\omega\sigma=0$ together with \eqref{eq:Qt-trace} gives the second.

By the Hodge star formula and the primitive decomposition
\cite{Demailly}, every real
$(2,2)$-form $\Psi$ on a Hermitian threefold satisfies
\begin{equation}\label{eq:star-22}
*\Psi
=
-\Lambda_\omega\Psi
+
\frac12\bigl(\Lambda_\omega^2\Psi\bigr)\omega.
\end{equation}
Combining \eqref{eq:star-22} with the contraction identities
\eqref{eq:Phi-contractions}, we obtain
\[
 *\Phi
 =-\Lambda_\omega\Phi+\frac12\bigl(\Lambda_\omega^2\Phi\bigr)\omega
 =-\bigl(4\sigma+\mathcal Q\bigr)+\frac12|T|^2\omega.
\]
Hence
\begin{equation}\label{eq:star-Phi}
*\Phi
=
-4\sigma-\mathcal Q+\frac12|T|^2\omega.
\end{equation}

Assume in addition, for the rest of this subsection, that \(M\) is
compact.  By \cite{FusiGiusti},
\begin{equation}\label{eq:fusi-star}
  \partial^*\partial\omega
  =*\left(\frac{\ii\partial\bar\partial\omega^{n-2}}{(n-2)!}\right)
\end{equation}
In dimension three this becomes
\begin{equation}\label{eq:fusi-star-dim3}
  \partial^*\partial\omega=*\Phi,
  \qquad \Phi=\ii\partial\bar\partial\omega.
\end{equation}

The orthogonality needed below follows from the closedness of the
first Chern--Ricci form.  In local holomorphic coordinates
\((z^1,\ldots,z^n)\), write
\[
 g_{i\bar j}
 =g\left(\frac{\partial}{\partial z^i},
         \frac{\partial}{\partial\bar z^j}\right),
\]
and let \((g^{i\bar j})\) be the inverse matrix of
\((g_{i\bar j})\).  Our curvature convention gives
\[
  \rhoone_{i\bar j}
  =-\partial_{\bar j}\bigl(g^{k\bar q}\partial_i g_{k\bar q}\bigr)
  =-\partial_i\partial_{\bar j}\log\det(g_{a\bar b}),
\]
and hence
\[
  \rhoone=-\ii\partial\bar\partial\log\det(g_{a\bar b}).
\]
Thus \(\dd\rhoone=0\), so \(\partial\rhoone=0\).  The definition of
the formal adjoint now gives
\begin{equation}\label{eq:ricci-orthogonality}
 \int_M\ip{\rhoone}{\partial^*\partial\omega}\,\dd V
 =\int_M\ip{\partial\rhoone}{\partial\omega}\,\dd V
 =0.
\end{equation}
At a point, in a unitary frame,
\[
  |\omega|^2=3,
  \qquad \ip{\sigma}{\omega}=0,
  \qquad \ip{\mathcal Q}{\omega}=\tr \mathcal Q=|T|^2.
\]
Both $\sigma$ and $\mathcal Q$ are Hermitian, so their pointwise inner product
$\ip{\sigma}{\mathcal Q}$ is real.
Therefore \eqref{eq:rho-sigma} and \eqref{eq:star-Phi} give
\[
 \ip{\rhoone}{*\Phi}
 =\ip{2c\omega+\sigma}{-4\sigma-\mathcal Q+\tfrac12|T|^2\omega}
 =c|T|^2-4|\sigma|^2-\ip{\sigma}{\mathcal Q}.
\]
Integrating the preceding identity and using
\eqref{eq:fusi-star-dim3} and
\eqref{eq:ricci-orthogonality}, we obtain
\begin{equation}\label{eq:sigma-Qt}
 \int_M\ip{\sigma}{\mathcal Q}\,\dd V
 =
 c\norm{T}_{L^2}^2
 -
 4\norm{\sigma}_{L^2}^2.
\end{equation}

\subsection{Chern Laplacians and the curvature action}

We use the following formal-adjoint identity repeatedly.
\begin{lemma}\label{lem:balanced-ibp}
Let $(M^n,g)$ be a compact balanced Hermitian manifold. Let
\[
A=\sum_s A_{\bar s}\,\overline{\varphi^s}
\]
be a tensor-valued $(0,1)$-form, and let $B$ be a tensor field of the
same coefficient type as $A_{\bar s}$. Then
\begin{equation}\label{eq:balanced-ibp}
\int_M \operatorname{Re}\sum_s
\left\langle
(\nabla_{e_s}A)(\bar e_s),B
\right\rangle\,\dd V
=
-\int_M \operatorname{Re}\sum_s
\left\langle
A_{\bar s},\nabla_{\bar e_s}B
\right\rangle\,\dd V.
\end{equation}

Similarly, if
\[
A=\sum_s A_s\varphi^s
\]
is a tensor-valued $(1,0)$-form and $B$ has the same coefficient type
as $A_s$, then
\begin{equation}\label{eq:balanced-ibp-conjugate}
\int_M \operatorname{Re}\sum_s
\left\langle
(\nabla_{\bar e_s}A)(e_s),B
\right\rangle\,\dd V
=
-\int_M \operatorname{Re}\sum_s
\left\langle
A_s,\nabla_{e_s}B
\right\rangle\,\dd V.
\end{equation}
\end{lemma}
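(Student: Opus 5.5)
The identity \eqref{eq:balanced-ibp} will follow from the divergence theorem for a suitable complex vector field, with balancedness ensuring that the Chern divergence agrees with the Riemannian divergence. Define the $(1,0)$-vector field
\[
 Z=\sum_s\ip{A_{\bar s}}{B}\,e_s ,
\]
which is independent of the choice of unitary frame, since $\sum_s A_{\bar s}\otimes e_s$ is the contraction of $A$ with the dual of $\overline{\varphi^s}$. The plan has three steps.

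First, compute the Chern divergence $\sum_s \varphi^s(\nabla_{e_s}Z)$. By metric compatibility \eqref{eq:complex-metric-compatibility} it equals
\[
\sum_s\Bigl(\ip{(\nabla_{e_s}A)(\bar e_s)}{B}+\ip{A_{\bar s}}{\nabla_{\bar e_s}B}\Bigr).
\]
The first term uses that $\nabla$ preserves types, so that $(\nabla_{e_s}A)_{\bar s}$ is the coefficient of the covariant derivative of $A$. The contraction is frame independent, and working at a point in a frame with $\nabla e_s=0$ there is harmless; alternatively one can do the tensorial computation directly.

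Second, relate the Chern divergence to the Riemannian divergence. For a $(1,0)$ field $Z$, the standard identity is
\[
\operatorname{div}_{LC}Z=\sum_s\varphi^s(\nabla^c_{e_s}Z)-\eta(Z)
\]
up to sign conventions, where $\eta$ is the torsion one-form with $\eta_i=\sum_s T^s_{si}$. This identity follows from $d(\iota_Z dV)=\operatorname{div}Z\,dV$ together with $dV=\omega^n/n!$ and $d\omega^{n-1}=\theta$-type terms. Equivalently, $\nabla^{LC}-\nabla^c$ is expressed through $T$, and its trace is $\eta$. Since $\eta=0$ by \eqref{eq:balanced-trace}, the Chern divergence is the Riemannian divergence of the real vector field $Z+\bar Z$ (or of $\operatorname{Re}Z$ suitably). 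Stokes' theorem on the compact manifold $M$ then gives
\[
\int_M \operatorname{Re}\sum_s\varphi^s(\nabla_{e_s}Z)\,\dd V=0,
\]
which is exactly \eqref{eq:balanced-ibp}.

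Third, the conjugate statement \eqref{eq:balanced-ibp-conjugate} follows by the same argument applied to the $(0,1)$ field $\sum_s\ip{A_s}{B}\bar e_s$, or simply by complex conjugation. Here one uses that $\overline{\ip{X}{Y}}=\ip{Y}{X}$, which is why only real parts appear.

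The main obstacle is the divergence comparison in the second step, namely verifying carefully that the trace of $\nabla^{LC}-\nabla^c$ on $(1,0)$ fields is exactly the torsion one-form $\eta$, so that it vanishes under balancedness. I would check this via
\[
\mathcal L_{Z}\,\frac{\omega^n}{n!}=d\Bigl(\iota_Z\frac{\omega^n}{n!}\Bigr),
\]
expanding in the unitary coframe using $d\varphi^i$ written in terms of $T$ and the connection forms. The connection-form contributions should cancel against the Chern derivative, leaving $\sum_{s}T^s_{si}Z^i$.
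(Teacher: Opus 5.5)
Your proposal is correct and follows the paper's own proof. The paper also uses the frame-independent $(1,0)$ field $X=\sum_s\langle A_{\bar s},B\rangle e_s$ and expands its Chern divergence by metric compatibility in a Chern-normal frame. It then integrates to zero via $\partial(\iota_X\,\dd V)=\bigl(\sum_s\langle\nabla_{e_s}X,e_s\rangle-\eta(X)\bigr)\,\dd V$ with $\eta=0$ and Stokes, and treats the conjugate case with $\sum_s\langle A_s,B\rangle\bar e_s$. Your comparison with the Riemannian divergence is just another derivation of that same formula: the torsion term in $\mathcal L_X\,\dd V$ traces to $-\eta(X)$.
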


\begin{proof}
Let
\[
X=\sum_s X^s e_s
\]
be a $(1,0)$ vector field. The Chern divergence formula gives
\[
\partial(\iota_X\dd V)
=
\left(
\sum_s
\left\langle
\nabla_{e_s}X,e_s
\right\rangle
-\eta(X)
\right)\dd V,
\]
where $\eta$ is the torsion one-form. Since $g$ is balanced,
$\eta=0$. Thus, by Stokes' theorem,
\begin{equation}\label{eq:balanced-divergence}
\int_M
\sum_s
\left\langle
\nabla_{e_s}X,e_s
\right\rangle
\,\dd V
=
0.
\end{equation}

Now define
\[
X
=
\sum_s
\left\langle
A_{\bar s},B
\right\rangle e_s.
\]
This definition is independent of the choice of unitary frame. Fix
$p\in M$ and choose a local unitary frame that is Chern-normal at $p$.
Then, at $p$,
\begin{align*}
\sum_s
\left\langle
\nabla_{e_s}X,e_s
\right\rangle
&=
\sum_s
e_s\left\langle A_{\bar s},B\right\rangle \\
&=
\sum_s
\left\langle
(\nabla_{e_s}A)(\bar e_s),B
\right\rangle
+
\sum_s
\left\langle
A_{\bar s},\nabla_{\bar e_s}B
\right\rangle.
\end{align*}
Since $p$ is arbitrary, this identity holds pointwise on $M$.
Taking real parts, integrating, and using
\eqref{eq:balanced-divergence} gives \eqref{eq:balanced-ibp}.

The second identity follows by applying the conjugate divergence
formula to
\[
X
=
\sum_s
\left\langle
A_s,B
\right\rangle \bar e_s.
\]
\end{proof}

For tensors $A$ and $B$ of the same type as $T$, we use the
ordered-index pointwise Hermitian inner product
\begin{equation}\label{eq:tensor-inner-product}
\ip{A}{B}
=
\sum_{i,k,j}
A^j_{ik}\overline{B^j_{ik}}.
\end{equation}

For tensorial second derivatives, set
\[
\nabla^2_{X,Y}T
:=
\nabla_X(\nabla_YT)-\nabla_{\nabla_XY}T.
\]
The corresponding component convention is
\[
  T^j_{ik,\bar\ell m}
  :=(\nabla^2_{e_m,\bar e_\ell}T)^j_{ik},
  \qquad
  T^j_{ik,m\bar\ell}
  :=(\nabla^2_{\bar e_\ell,e_m}T)^j_{ik}.
\]
Thus the comma indices record the order of differentiation from left
to right; the rightmost index is the outer derivative.
With respect to a local unitary frame $\{e_s\}$, define
\begin{equation}\label{eq:chern-laplacians-tensor}
\mathcal U
:=
\sum_s\nabla^2_{e_s,\bar e_s}T,
\qquad
\mathcal V
:=
\sum_s\nabla^2_{\bar e_s,e_s}T.
\end{equation}
Write
\[
\mathcal U
=
\frac12\sum_{i,k,j}
\mathcal U^j_{ik}\,
\varphi^i\wedge\varphi^k\otimes e_j,
\qquad
\mathcal V
=
\frac12\sum_{i,k,j}
\mathcal V^j_{ik}\,
\varphi^i\wedge\varphi^k\otimes e_j.
\]
Thus
\begin{equation}\label{eq:chern-laplacians}
\mathcal U^j_{ik}
=
\sum_s T^j_{ik,\bar s s},
\qquad
\mathcal V^j_{ik}
=
\sum_s T^j_{ik,s\bar s}.
\end{equation}

Since the Chern torsion has no mixed component, the tensorial
second-derivative difference is the usual mixed curvature commutator.
Hence
\begin{equation}\label{eq:UV-commutator}
  (\mathcal U-\mathcal V)^j_{ik}
  =
  \sum_s\left(
    T^j_{ik,\bar s s}-T^j_{ik,s\bar s}
  \right).
\end{equation}

The mixed Ricci commutation identity
\cite{ZhouZheng} is
\begin{equation}\label{eq:mixed-commutator}
 T^j_{ik,\bar\ell m}-T^j_{ik,m\bar\ell}
 =\sum_r\bigl(
 T^r_{ik}R^c_{m\bar\ell r\bar j}
 -T^j_{rk}R^c_{m\bar\ell i\bar r}
 -T^j_{ir}R^c_{m\bar\ell k\bar r}
 \bigr).
\end{equation}

For a Hermitian matrix $H=(H_{i\bar j})$, define
\begin{equation}\label{eq:CH}
\mathcal C_H(T)^j_{ik}
=
\sum_r\bigl(
T^r_{ik}H_{r\bar j}
-T^j_{rk}H_{i\bar r}
-T^j_{ir}H_{k\bar r}
\bigr).
\end{equation}
By \eqref{eq:tensor-inner-product},
\begin{equation}\label{eq:CH-pairing}
\ip{\mathcal C_H(T)}{T}
=
\sum_{i,k,j}
\mathcal C_H(T)^j_{ik}\,
\overline{T^j_{ik}}.
\end{equation}

Setting $m=\ell=s$ in \eqref{eq:mixed-commutator} and summing over
$s$, we obtain
\begin{equation}\label{eq:laplacian-commutator}
 (\mathcal U-\mathcal V)^j_{ik}
 =\sum_r\bigl(
 T^r_{ik}\rhotwo_{r\bar j}
 -T^j_{rk}\rhotwo_{i\bar r}
 -T^j_{ir}\rhotwo_{k\bar r}
 \bigr)
 =\mathcal C_{\rhotwo}(T)^j_{ik}.
\end{equation}

We first record the resulting algebraic contraction.

\begin{lemma}\label{lem:CH-contraction}
Let $(M^3,g)$ be a balanced threefold. For every Hermitian
matrix $H=(H_{i\bar j})$,
\begin{equation}\label{eq:CH-contraction}
\operatorname{Re}\ip{\mathcal C_H(T)}{T}
=
(\tr H)|T|^2-4\ip{H}{\mathcal Q}.
\end{equation}
If moreover $H_g^c\equiv c$, then
\begin{equation}\label{eq:laplacian-contraction}
\operatorname{Re}\ip{\mathcal U-\mathcal V}{T}
=
-2c|T|^2+12\ip{\sigma}{\mathcal Q}.
\end{equation}
\end{lemma}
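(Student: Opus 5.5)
The plan is to prove \eqref{eq:CH-contraction} by a direct pointwise expansion of \eqref{eq:CH-pairing}. Every quantity involved ($\ip{\mathcal C_H(T)}{T}$, $|T|^2$, $\tr H$, $\ip{H}{\mathcal Q}$) is independent of the unitary frame. So I would fix a point $p$ and work in the Zhou--Zheng frame of Lemma~\ref{lem:torsion-frame}, where balancedness enters.

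\textbf{Step 1: split the pairing into three sums.} Write
\[
\ip{\mathcal C_H(T)}{T}
=\sum T^r_{ik}H_{r\bar j}\overline{T^j_{ik}}
-\sum T^j_{rk}H_{i\bar r}\overline{T^j_{ik}}
-\sum T^j_{ir}H_{k\bar r}\overline{T^j_{ik}} .
\]
By skew-symmetry $T^j_{ik}=-T^j_{ki}$, the third sum equals the second after renaming indices. The second sum is
\[
\sum_{i,r}H_{i\bar r}\sum_{k,j}T^j_{rk}\overline{T^j_{ik}}
=\sum_{i,r}H_{i\bar r}\mathcal Q_{r\bar i}.
\]
Since $\mathcal Q$ is Hermitian, this is $\ip{H}{\mathcal Q}$, a real number. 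Hence the last two terms together contribute $-2\ip{H}{\mathcal Q}$.

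\textbf{Step 2: handle the first sum with the upper-index contraction.} The first sum is $\sum_{r,j}H_{r\bar j}P_{r\bar j}$, where
\[
P_{r\bar j}:=\sum_{i,k}T^r_{ik}\overline{T^j_{ik}}
\]
contracts the two lower indices of the torsion. The key claim, which is special to complex dimension three with balanced torsion, is
\[
\overline{P}=|T|^2 I-2\mathcal Q,
\]
understood as an identity of Hermitian tensors, with the index placement matched so that the pairing with $H$ is correct. I would verify it in the normal form \eqref{eq:torsion-normal-form}, where both sides are diagonal.
\begin{itemize}
\item The left side has $P_{1\bar1}=|T^1_{23}|^2+|T^1_{32}|^2=2|\tau_1|^2$, and similarly for the other diagonal entries.
\item On the right, $\mathcal Q_{1\bar1}=\sum_{p,k}|T^k_{1p}|^2=|\tau_2|^2+|\tau_3|^2$, and the off-diagonal entries of $\mathcal Q$ vanish.
\item Since $|T|^2=2(|\tau_1|^2+|\tau_2|^2+|\tau_3|^2)$, we get $|T|^2-2\mathcal Q_{1\bar1}=2|\tau_1|^2$, which matches.
\end{itemize}
Hence the first sum equals $(\tr H)|T|^2-2\ip{H}{\mathcal Q}$. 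Adding Step 1 gives $(\tr H)|T|^2-4\ip{H}{\mathcal Q}$. This expression is already real, so taking real parts proves \eqref{eq:CH-contraction}.

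\textbf{Step 3: specialize to the Laplacian commutator.} By \eqref{eq:laplacian-commutator}, $\mathcal U-\mathcal V=\mathcal C_{\rhotwo}(T)$, so we apply \eqref{eq:CH-contraction} with $H=\rhotwo=2c\omega-3\sigma$ from \eqref{eq:rho-sigma}. Then $\tr\rhotwo=6c$ because $\sigma$ is traceless, and $\ip{\omega}{\mathcal Q}=|T|^2$ by \eqref{eq:Qt-trace}. Therefore
\[
\operatorname{Re}\ip{\mathcal U-\mathcal V}{T}
=6c|T|^2-4\bigl(2c|T|^2-3\ip{\sigma}{\mathcal Q}\bigr)
=-2c|T|^2+12\ip{\sigma}{\mathcal Q},
\]
which is \eqref{eq:laplacian-contraction}.

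\textbf{Main obstacle.} The only delicate point is Step 2. It requires keeping track of index placement and conjugation conventions, so that the pairing of $H$ with $P$ really produces $\ip{H}{\mathcal Q}$ rather than its transpose or conjugate. It also depends on the dimension-three identity relating $P$ to $\mathcal Q$, which is exactly where Lemma~\ref{lem:torsion-frame} and balancedness are used.
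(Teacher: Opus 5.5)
Your proof is correct. It differs from the paper's mainly in how the computation is organized.

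The paper works entirely in the Zhou--Zheng frame of Lemma~\ref{lem:torsion-frame}. It computes the components of $\mathcal C_H(T)$ that pair with nonzero torsion, e.g.\ $\mathcal C_H(T)^1_{23}=(H_{1\bar1}-H_{2\bar2}-H_{3\bar3})\tau_1$. It then matches $\operatorname{Re}\ip{\mathcal C_H(T)}{T}$ with $(\tr H)|T|^2-4\ip{H}{\mathcal Q}=2\sum_j(2H_{j\bar j}-\tr H)|\tau_j|^2$, coefficient by coefficient in the $|\tau_j|^2$.

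You instead handle the second and third terms of \eqref{eq:CH} without choosing a frame, using only the skew-symmetry of $T$. That part holds in every dimension and without balancedness. All of the dimension-three, balanced input is then isolated in one identity, $\overline{P_{r\bar j}}=|T|^2\delta_{rj}-2\mathcal Q_{r\bar j}$, which relates the upper-index and lower-index quadratic contractions of $T$.

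This makes it clear where the hypotheses enter. Write $T^j_{ik}=\sum_a\varepsilon_{ika}B_{aj}$. Then your identity reads $B^*B=\overline{B}B^{\mathsf T}$. This holds when $B$ is symmetric, which is exactly the balanced condition $\sum_sT^s_{is}=0$. It fails for general $B$. This form also lets you verify the identity in an arbitrary unitary frame, without the normal form.

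The index-placement worry you raise is harmless. Under unitary changes of frame, $\overline{P_{r\bar j}}$ transforms exactly as $\mathcal Q_{r\bar j}$ does, so checking the identity in one frame suffices. In the normal frame both sides are real diagonal in any case.

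The paper's direct computation is shorter and avoids this bookkeeping, while your route shows more clearly why the lemma is special to balanced threefolds. Your Step~3 coincides with the paper's.
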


\begin{proof}
Fix a point and choose the frame of Lemma~\ref{lem:torsion-frame}.
Then
\[
|T|^2
=
2\bigl(
|\tau_1|^2+|\tau_2|^2+|\tau_3|^2
\bigr),
\]
and \eqref{eq:Qt} gives
\begin{equation}\label{eq:Qt-tau-components}
\mathcal Q
=
\operatorname{diag}\bigl(
|\tau_2|^2+|\tau_3|^2,\,
|\tau_1|^2+|\tau_3|^2,\,
|\tau_1|^2+|\tau_2|^2
\bigr).
\end{equation}
In particular,
\[
\tr\mathcal Q=|T|^2.
\]

By \eqref{eq:CH},
\[
\mathcal C_H(T)^1_{23}
=
\bigl(
H_{1\bar1}-H_{2\bar2}-H_{3\bar3}
\bigr)\tau_1,
\]
\[
\mathcal C_H(T)^2_{31}
=
\bigl(
H_{2\bar2}-H_{3\bar3}-H_{1\bar1}
\bigr)\tau_2,
\]
and
\[
\mathcal C_H(T)^3_{12}
=
\bigl(
H_{3\bar3}-H_{1\bar1}-H_{2\bar2}
\bigr)\tau_3.
\]
The reversed components give the same contributions. Hence
\begin{align}
\operatorname{Re}\ip{\mathcal C_H(T)}{T}
={}&
2\bigl(
H_{1\bar1}-H_{2\bar2}-H_{3\bar3}
\bigr)|\tau_1|^2 \nonumber\\
&+
2\bigl(
H_{2\bar2}-H_{3\bar3}-H_{1\bar1}
\bigr)|\tau_2|^2 \nonumber\\
&+
2\bigl(
H_{3\bar3}-H_{1\bar1}-H_{2\bar2}
\bigr)|\tau_3|^2.
\label{eq:CH-component-sum}
\end{align}

Put
\[
S=|\tau_1|^2+|\tau_2|^2+|\tau_3|^2.
\]
By \eqref{eq:Qt-tau-components},
\[
\ip{H}{\mathcal Q}
=
(\tr H)S
-
\sum_{j=1}^3H_{j\bar j}|\tau_j|^2.
\]
Therefore,
\begin{align*}
(\tr H)|T|^2-4\ip{H}{\mathcal Q}
&=
2\sum_{j=1}^3
\bigl(
2H_{j\bar j}-\tr H
\bigr)|\tau_j|^2,
\end{align*}
which agrees with \eqref{eq:CH-component-sum}. This proves
\eqref{eq:CH-contraction}.

Now take
\[
H=\rhotwo=2c\omega-3\sigma.
\]
By \eqref{eq:laplacian-commutator},
\eqref{eq:balanced-contractions}, and \eqref{eq:Qt-trace},
\[
 \operatorname{Re}\ip{\mathcal U-\mathcal V}{T}
 =6c|T|^2-4\ip{2c\omega-3\sigma}{\mathcal Q}
 =-2c|T|^2+12\ip{\sigma}{\mathcal Q}.
\]
This proves \eqref{eq:laplacian-contraction}.
\end{proof}

We next relate \(\mathcal U\) to the curvature.  The vanishing torsion
trace also gives a holomorphic divergence identity.

\begin{lemma}\label{lem:holomorphic-divergence}
On a balanced Hermitian manifold, every local unitary frame
$\{e_s\}$ satisfies
\begin{equation}\label{eq:holomorphic-divergence}
\sum_s T^s_{ik,s}=0.
\end{equation}
\end{lemma}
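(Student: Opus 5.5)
The plan is to obtain \eqref{eq:holomorphic-divergence} by contracting the \((3,0)\)-part of the first Bianchi identity for the Chern connection, using balancedness \eqref{eq:balanced-trace} to kill the other terms. For any connection with torsion \(T\), the first Bianchi identity reads
\[
\mathfrak S_{X,Y,Z}\,R(X,Y)Z=\mathfrak S_{X,Y,Z}\bigl((\nabla_XT)(Y,Z)+T(T(X,Y),Z)\bigr),
\]
where \(\mathfrak S\) is the cyclic sum. I would apply it with \(X,Y,Z\) all of type \((1,0)\). The Chern curvature is a \((1,1)\)-form, so \(R^c(e_i,e_k)=0\) and the left-hand side vanishes. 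Since \(T\) has no mixed components, \(T(T(e_i,e_k),e_l)\) involves only \(T^r_{ik}T^j_{rl}\). In components this gives
\[
T^j_{ik,l}+T^j_{kl,i}+T^j_{li,k}
+\sum_r\bigl(T^r_{ik}T^j_{rl}+T^r_{kl}T^j_{ri}+T^r_{li}T^j_{rk}\bigr)=0 .
\]
The placement of the derivative index should be checked against the comma convention; the cyclic structure is insensitive to it.

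Next, I would set \(j=l=s\) and sum over \(s\), treating the derivative and quadratic terms separately.

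For the derivative terms, the first one is exactly \(\sum_sT^s_{ik,s}\). The other two are \(\sum_sT^s_{ks,i}\) and \(\sum_sT^s_{si,k}\). These vanish because contraction commutes with \(\nabla^c\), and the traces \(\sum_sT^s_{ks}\) and \(\sum_sT^s_{si}\) vanish identically by \eqref{eq:balanced-trace}. This is the same argument used just before Proposition~\ref{prop:N-reconstruction}.

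For the quadratic terms, \(\sum_{r,s}T^r_{ik}T^s_{rs}=0\) again by \eqref{eq:balanced-trace}. The remaining two terms are \(\sum_{r,s}T^r_{ks}T^s_{ri}\) and \(\sum_{r,s}T^r_{si}T^s_{rk}\). Relabel \(r\leftrightarrow s\) in the second to get \(\sum_{r,s}T^s_{ri}T^r_{sk}\). Then use \(T^r_{sk}=-T^r_{ks}\) to rewrite it as \(-\sum_{r,s}T^r_{ks}T^s_{ri}\), which cancels the first exactly.

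What remains is \(\sum_sT^s_{ik,s}=0\) at every point in every unitary frame. The frame independence is automatic, since the left side is a contraction of the tensor \(\nabla^{1,0}T\).

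The only delicate point is getting the correct sign and index placement in the torsion-quadratic term of the Bianchi identity. Under the paper's convention \(T(X,Y)=\nabla_XY-\nabla_YX-[X,Y]\), I would verify directly that \(\mathfrak S\,T(T(X,Y),Z)\) enters with the sign shown. The cancellation argument, however, works for either overall sign, because the three quadratic terms transform among themselves under the relabeling. So I expect no real obstacle beyond bookkeeping.
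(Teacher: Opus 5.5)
Your proposal is correct and matches the paper's proof essentially step for step. Both take the $(3,0)$ part of the first Bianchi identity for the Chern connection and contract it at $j=l=s$. The two derivative traces and $\sum_{r,s}T^r_{ik}T^s_{rs}$ vanish by the balanced condition. The two remaining quadratic terms cancel after relabeling $r\leftrightarrow s$ and using the skew-symmetry $T^r_{sk}=-T^r_{ks}$.
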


\begin{proof}
Let $\mathfrak S$ denote the cyclic sum. With respect to a local
unitary frame $\{e_s\}$, the $(3,0)$ part of the first Bianchi identity
is
\[
\mathfrak S_{sik}
\bigl(
(\nabla_{e_s}T)(e_i,e_k)
+
T(T(e_s,e_i),e_k)
\bigr)
=
0.
\]
Taking the $e_s$-component and summing over $s$, we obtain
\begin{equation}\label{eq:pure-bianchi-expanded}
\begin{split}
0={}&
\sum_s T^s_{ik,s}
+\sum_s T^s_{ks,i}
+\sum_s T^s_{si,k}\\
&+\sum_{r,s}T^r_{si}T^s_{rk}
+\sum_{r,s}T^r_{ik}T^s_{rs}
+\sum_{r,s}T^r_{ks}T^s_{ri}.
\end{split}
\end{equation}

By balancedness,
\[
\sum_sT^s_{ks}
=
\sum_sT^s_{si}
=
\sum_sT^s_{rs}
=
0.
\]
Hence
\[
\sum_sT^s_{ks,i}
=
\sum_sT^s_{si,k}
=
\sum_{r,s}T^r_{ik}T^s_{rs}
=
0.
\]
Moreover,
\[
\sum_{r,s}T^r_{ks}T^s_{ri}
=
\sum_{r,s}T^s_{kr}T^r_{si}
=
-\sum_{r,s}T^s_{rk}T^r_{si}
=
-\sum_{r,s}T^r_{si}T^s_{rk}.
\]
Thus \eqref{eq:pure-bianchi-expanded} reduces to
\[
\sum_sT^s_{ik,s}=0.
\]
\end{proof}

Define the curvature action
\begin{equation}\label{eq:R-action}
\begin{split}
  \mathcal R(T)^j_{ik}=\sum_{s,r}\bigl(&
    T^r_{ik}R^c_{s\bar j r\bar s}
   -T^s_{rk}R^c_{s\bar j i\bar r}
   -T^s_{ir}R^c_{s\bar j k\bar r}\bigr)
\end{split}
\end{equation}
In a local unitary frame, define the \((1,0)\)-Chern divergence of
\(N\) by
\begin{equation}\label{eq:divN}
  (\DivCh N)^j_{ik}
  =\sum_s(\nabla_{e_s}N)_{ik\bar j\bar s}.
\end{equation}
To expand this derivative, choose a Chern-normal unitary frame at the
point under consideration. Then
\begin{equation}\label{eq:divN-expanded}
 (\DivCh N)^j_{ik}
 =\sum_s(\nabla_{e_s}N)_{ik\bar j\bar s}
 =\sum_s\bigl(T^s_{ik,\bar j s}-T^j_{ik,\bar s s}\bigr)
 =\sum_s T^s_{ik,\bar j s}-\mathcal U^j_{ik}.
\end{equation}
By Lemma~\ref{lem:holomorphic-divergence},
\[
\sum_s T^s_{ik,s}=0.
\]
Taking the $\bar e_j$-covariant derivative gives
\[
\sum_s T^s_{ik,s\bar j}=0.
\]
Hence
\begin{align*}
\sum_s T^s_{ik,\bar j s}
&=
\sum_s
\bigl(
T^s_{ik,\bar j s}
-
T^s_{ik,s\bar j}
\bigr).
\end{align*}
Applying \eqref{eq:mixed-commutator} with $m=s$, $\ell=j$, and upper
index $s$, and then summing over $s$, gives
\[
\begin{split}
  \sum_{s,r}\bigl(&T^r_{ik}R^c_{s\bar j r\bar s}
   -T^s_{rk}R^c_{s\bar j i\bar r}
   -T^s_{ir}R^c_{s\bar j k\bar r}\bigr)
  =\mathcal R(T)^j_{ik}.
\end{split}
\]
Thus \eqref{eq:divN-expanded} becomes
\begin{equation}\label{eq:holomorphic-laplacian-div}
  \mathcal U=\mathcal R(T)-\DivCh N.
\end{equation}

The following pointwise identity is the key algebraic step.

\begin{lemma}\label{lem:curvature-action}
Let $(M^3,g)$ be a balanced threefold with
$H_g^c\equiv c$. Then
\begin{equation}\label{eq:curvature-action}
\operatorname{Re}\ip{\mathcal R(T)}{T}
=
c|T|^2.
\end{equation}
\end{lemma}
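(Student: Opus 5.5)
The plan is to evaluate \(\mathcal R(T)\) by expressing every curvature component through Lemma~\ref{lem:full-reconstruction}, Proposition~\ref{prop:N-reconstruction} and the Chern--Bianchi identities. The \(c\)-part should then produce exactly \(c|T|^2\), and the remaining \(\sigma\)- and \(\nabla^{0,1}T\)-terms should drop out after taking real parts.

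\emph{The first term.} Apply \eqref{eq:bianchi-one} with \((i,k,\ell)=(s,r,s)\):
\[
R^c_{s\bar j r\bar s}=R^c_{r\bar j s\bar s}-T^s_{sr,\bar j}.
\]
Summing over \(s\) and using balancedness \eqref{eq:balanced-trace} (and its covariant derivative) gives
\[
\sum_sR^c_{s\bar j r\bar s}=\rhoone_{r\bar j}=2c\delta_{rj}+\sigma_{r\bar j}
\]
by \eqref{eq:rho-sigma}. Hence the first term of \eqref{eq:R-action} equals \(2cT^j_{ik}+\sum_rT^r_{ik}\sigma_{r\bar j}\).

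\emph{The second and third terms.} Substitute \eqref{eq:full-reconstruction} for \(R^c_{s\bar j i\bar r}\) and \(R^c_{s\bar j k\bar r}\). The \(c\)-part
\[
\tfrac c2(\delta_{sj}\delta_{ir}+\delta_{sr}\delta_{ij})
\]
contributes \(-\tfrac c2T^j_{ik}\) from each of the two terms. The \(\delta_{sr}\)-pieces vanish by the trace condition \eqref{eq:balanced-trace}. So the \(c\)-contributions total \(2c-\tfrac c2-\tfrac c2=c\), and pairing with \(T\) yields \(c|T|^2\).

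\emph{The \(N\)-terms.} Insert \eqref{eq:N-sigma} into these terms. Together with the \(\sigma\)-piece from the first term, this gives an expression of the form \(\mathcal C_{\sigma}(T)\)-type contractions. I would evaluate its real pairing with \(T\) in the normal frame of Lemma~\ref{lem:torsion-frame}, exactly as in the proof of Lemma~\ref{lem:CH-contraction}. The result should be a combination \(\alpha(\tr\sigma)|T|^2+\beta\ip{\sigma}{\mathcal Q}\) with \(\tr\sigma=0\), and I expect the coefficients to force \(\beta=0\) once all three sources are added. The first term already contributes \(\ip{\sigma}{\mathcal Q}\)-type quantities via the same computation.

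\emph{The derivative terms (main obstacle).} The remaining terms are
\[
T^s_{rk}\bigl(T^r_{si,\bar j}+\overline{T^i_{jr,\bar s}}\bigr)
\]
and their analogues, paired with \(\overline{T^j_{ik}}\). These are cubic and not pointwise controlled, so they must cancel in the real part. I would first use \eqref{eq:N} to trade \(T^r_{si,\bar j}\) for \(T^j_{si,\bar r}+N_{si\bar j\bar r}\), which moves any \(N\)-pieces back into \(\sigma\)-terms. I would then show that the residual sums of the form
\[
\sum T^s_{rk}\,\overline{T^j_{ik}}\,T^j_{si,\bar r}
\quad\text{and}\quad
\sum T^s_{rk}\,\overline{T^j_{ik}}\,\overline{T^i_{jr,\bar s}}
\]
are complex conjugates of each other up to sign, after relabelling indices and using the skew-symmetry of \(T\). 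In that case they are purely imaginary, or they cancel between the second and third terms of \eqref{eq:R-action}. I would also check this in the Zhou--Zheng frame, where only \(\tau_1,\tau_2,\tau_3\) survive, so the index bookkeeping becomes finite.

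If the pairing fails to cancel outright, the fallback is to show that the surviving combination is \(\operatorname{Re}\) of a contraction of \(\Pi\) against \(\mathcal Q\). Then \eqref{eq:Pi-sigma} converts it to \(\ip{\sigma}{\mathcal Q}\), which should cancel against the \(\sigma\)-terms of the previous step, leaving \(\operatorname{Re}\ip{\mathcal R(T)}{T}=c|T|^2\).
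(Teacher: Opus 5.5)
\textbf{There is a genuine gap.} Several parts of your plan are correct. The first term, $\sum_sR^c_{s\bar j r\bar s}=\rhoone_{r\bar j}$ via \eqref{eq:bianchi-one} and balancedness, is right. So is the $c$-part, $2c-\frac c2-\frac c2=c$. Your expectation about the $\sigma$-terms also holds. In the frame of Lemma~\ref{lem:torsion-frame}, for pairwise distinct $i,j,k$, the $\sigma$- and $N$-pieces add up to $T^j_{ik}\bigl(\sigma_{j\bar j}-(\sigma_{i\bar i}+\sigma_{j\bar j})-(\sigma_{k\bar k}+\sigma_{j\bar j})\bigr)=-(\tr\sigma)\,T^j_{ik}=0$.

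The gap is the step you call the main obstacle. You leave it as an expectation, and the mechanism you propose for it does not work.
\begin{itemize}
\item In that frame, the surviving derivative pieces of the second term of \eqref{eq:R-action} are $\frac12T^j_{ik}\bigl(T^i_{ji,\bar j}+\overline{T^i_{ji,\bar j}}\bigr)=T^j_{ik}\operatorname{Re}T^i_{ji,\bar j}$. Those of the third term are $T^j_{ik}\operatorname{Re}T^k_{jk,\bar j}$.
\item So the two cubic pieces are complex conjugates with a plus sign. They contribute the real quantity $|T^j_{ik}|^2\operatorname{Re}T^i_{ji,\bar j}$ to the pairing, not something purely imaginary.
\item After your trade via \eqref{eq:N} they are not conjugate at all: one involves $T^j_{ji,\bar i}$, the other $\overline{T^i_{ji,\bar j}}$.
\item Relabelling cannot make the second and third terms cancel. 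Swapping $i\leftrightarrow k$ and using the skew-symmetry of $T$ shows that they make identical contributions to $\ip{\mathcal R(T)}{T}$.
\item Within a single component the pieces $T^j_{ik}\operatorname{Re}T^i_{ji,\bar j}$ and $T^j_{ik}\operatorname{Re}T^k_{jk,\bar j}$ do cancel. That cancellation comes neither from index symmetries nor from your fallback through $\Pi$ and $\mathcal Q$, which is unsubstantiated.
\end{itemize}

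The missing ingredient is the $\bar e_j$-derivative of the balanced trace condition. From $\sum_rT^r_{rj,\bar j}=0$ and $T^j_{jj}=0$ you get $T^i_{ij,\bar j}+T^k_{kj,\bar j}=0$. Hence $\operatorname{Re}\bigl(T^i_{ji,\bar j}+T^k_{jk,\bar j}\bigr)=0$. With this identity your route closes, and it even gives the pointwise statement $\mathcal R(T)^j_{ik}=cT^j_{ik}$ on every nonzero component.

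This is exactly the identity the paper's proof rests on, but the paper reaches it more directly. It works with $R^c$ in the Zhou--Zheng frame throughout:
\begin{itemize}
\item it kills the cross terms by $R^c_{m\bar j m\bar j}=0$ for $m\neq j$, from \eqref{eq:polarization};
\item it uses $R^c_{j\bar jj\bar j}=c$;
\item it rewrites $R^c_{m\bar jj\bar m}-R^c_{j\bar jm\bar m}=-T^m_{mj,\bar j}$ by \eqref{eq:bianchi-one}.
\end{itemize}
So neither Lemma~\ref{lem:full-reconstruction} nor Proposition~\ref{prop:N-reconstruction} is needed.
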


\begin{proof}
Fix a point and choose the unitary frame of
Lemma~\ref{lem:torsion-frame}. Extend it locally to a unitary frame.
Let $i,j,k$ be pairwise distinct. By the torsion normal form, the
three terms in \eqref{eq:R-action} reduce to
\begin{align}
\sum_{s,r}T^r_{ik}R^c_{s\bar j r\bar s}
&=
T^j_{ik}\sum_sR^c_{s\bar j j\bar s},
\label{eq:R-action-I}\\
-\sum_{s,r}T^s_{rk}R^c_{s\bar j i\bar r}
&=
-T^j_{ik}R^c_{j\bar j i\bar i}
-T^i_{jk}R^c_{i\bar j i\bar j},
\label{eq:R-action-II}\\
-\sum_{s,r}T^s_{ir}R^c_{s\bar j k\bar r}
&=
-T^j_{ik}R^c_{j\bar j k\bar k}
-T^k_{ij}R^c_{k\bar j k\bar j}.
\label{eq:R-action-III}
\end{align}

For $m\neq j$, setting $i=k=m$ and $\ell=j$ in
\eqref{eq:polarization} gives
\begin{equation}\label{eq:isotropic-zero}
R^c_{m\bar j m\bar j}=0.
\end{equation}
Thus the last terms in \eqref{eq:R-action-II} and
\eqref{eq:R-action-III} vanish. Since
$\{i,j,k\}=\{1,2,3\}$ and
\[
R^c_{j\bar j j\bar j}=c,
\]
we obtain
\begin{equation}\label{eq:R-action-differences}
 \mathcal R(T)^j_{ik}
 =T^j_{ik}\bigl[
 c+R^c_{i\bar j j\bar i}-R^c_{j\bar j i\bar i}
  +R^c_{k\bar j j\bar k}-R^c_{j\bar j k\bar k}
 \bigr].
\end{equation}

For each $m\in\{i,k\}$, equation \eqref{eq:bianchi-one} gives
\[
R^c_{m\bar j j\bar m}
-
R^c_{j\bar j m\bar m}
=
-T^m_{mj,\bar j}.
\]
Moreover, balancedness gives
\[
\sum_rT^r_{rj}=0.
\]
Taking the $\bar e_j$-covariant derivative,
\[
T^i_{ij,\bar j}+T^k_{kj,\bar j}=0,
\]
since $T^j_{jj}=0$. Hence the two curvature differences in
\eqref{eq:R-action-differences} cancel, and
\[
\mathcal R(T)^j_{ik}
=
cT^j_{ik}.
\]

This holds for every torsion component that may be nonzero in the
chosen frame. The remaining components make no contribution to the
inner product. Therefore,
\[
\operatorname{Re}\ip{\mathcal R(T)}{T}
=
c|T|^2.
\]
\end{proof}

Finally, balanced integration by parts gives the exact contribution of $\DivCh N$.

\begin{lemma}\label{lem:N-divergence}
Let $(M^3,g)$ be a compact balanced threefold. Then
\begin{equation}\label{eq:N-divergence-general}
\int_M\operatorname{Re}\ip{\DivCh N}{T}\,\dd V
=
\frac12\norm{N}_{L^2}^2.
\end{equation}
If moreover \(H_g^c\equiv c\), then
\begin{equation}\label{eq:N-divergence}
  \frac12\norm{N}_{L^2}^2
  =
  32\norm{\sigma}_{L^2}^2.
\end{equation}
\end{lemma}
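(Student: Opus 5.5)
The plan is to apply the balanced integration-by-parts formula of Lemma~\ref{lem:balanced-ibp} to move the outer \((1,0)\)-derivative in \(\DivCh N\) onto \(T\), and then use the antisymmetry of \(N\) in its barred indices to recognize the result as half of \(|N|^2\). The second assertion then follows from Proposition~\ref{prop:N-reconstruction}.

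\textbf{Setting up the integration by parts.} For each fixed \(s\), regard \(N\) as a tensor-valued \((0,1)\)-form \(A=\sum_sA_{\bar s}\overline{\varphi^s}\) whose coefficient \(A_{\bar s}\) is the \(T\)-type tensor \((A_{\bar s})^j_{ik}:=N_{ik\bar j\bar s}\). By \eqref{eq:divN}, the tensor \(\sum_s(\nabla_{e_s}A)(\bar e_s)\) is exactly \(\DivCh N\). I will check this in a Chern-normal frame at a point, where frame derivatives drop out. With \(B=T\), \eqref{eq:balanced-ibp} gives
\[
\int_M\operatorname{Re}\ip{\DivCh N}{T}\,\dd V
=-\int_M\operatorname{Re}\sum_{s,i,k,j}N_{ik\bar j\bar s}\,\overline{T^j_{ik,s}}\,\dd V .
\]
This is where some care is needed. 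The pairing \(\ip{A}{\nabla_{\bar e_s}B}\) is conjugate-linear in the second slot, and \(\nabla_{\bar e_s}\) of a tensor inside a conjugated slot corresponds to \(\overline{\nabla_{e_s}\bar{\cdot}}\). I will track carefully whether the derivative landing on \(T\) is \(T^j_{ik,\bar s}\) or the conjugate of a \((1,0)\)-derivative. Since \(N\) is built from \((0,1)\)-derivatives of \(T\), the useful outcome is
\[
\int_M\operatorname{Re}\ip{\DivCh N}{T}\,\dd V
=-\int_M\operatorname{Re}\sum N_{ik\bar j\bar s}\,\overline{T^j_{ik,\bar s}}\,\dd V
\]
up to the sign fixed by the convention \eqref{eq:complex-metric-compatibility}. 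I will therefore verify directly that \(e_s\ip{A_{\bar s}}{T}\) expands into \(\ip{\nabla_{e_s}A_{\bar s}}{T}\) plus a term containing \(\overline{(\nabla_{\bar e_s}T)}\). Equivalently, I can write \(\ip{X}{Y}\) using the conjugate coefficients and check which derivative appears.

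\textbf{Antisymmetrization.} Write the resulting integrand as \(\sum N_{ik\bar j\bar s}\overline{T^j_{ik,\bar s}}\) (with the sign fixed above). Since \(N_{ik\bar j\bar s}=-N_{ik\bar s\bar j}\), relabeling \(j\leftrightarrow s\) gives
\[
\sum N_{ik\bar j\bar s}\,\overline{T^j_{ik,\bar s}}
=\tfrac12\sum N_{ik\bar j\bar s}\,\overline{\bigl(T^j_{ik,\bar s}-T^s_{ik,\bar j}\bigr)}
=-\tfrac12\sum N_{ik\bar j\bar s}\,\overline{N_{ik\bar j\bar s}}
=-\tfrac12|N|^2 ,
\]
by \eqref{eq:N}. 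Combined with the minus sign from integration by parts, this yields \(+\tfrac12\norm{N}_{L^2}^2\), which is \eqref{eq:N-divergence-general}.

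\textbf{Main obstacle and the second assertion.} The main obstacle is making the sign and conjugation bookkeeping in the first step consistent. I will resolve it in a Chern-normal frame by differentiating \(\sum_s\langle A_{\bar s},T\rangle\) directly. If the derivative on \(T\) turns out to be of type \((1,0)\), I will instead use \eqref{eq:balanced-ibp-conjugate} applied to the conjugate pairing, so that a \((0,1)\)-derivative lands on \(T\). Finally, under \(H_g^c\equiv c\), Proposition~\ref{prop:N-reconstruction} gives \(|N|^2=64|\sigma|^2\) pointwise. Integrating gives \(\tfrac12\norm{N}_{L^2}^2=32\norm{\sigma}_{L^2}^2\), which is \eqref{eq:N-divergence}.
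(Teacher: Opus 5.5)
Your proposal is correct and follows the paper's proof. The paper likewise applies Lemma~\ref{lem:balanced-ibp} with $(A_{\bar s})^j_{ik}=N_{ik\bar j\bar s}$ and $B=T$ to get $-\int_M\operatorname{Re}\sum N_{ik\bar j\bar s}\overline{T^j_{ik,\bar s}}\,\dd V$, uses the skew-symmetry of $N$ to identify the integrand with $-\frac12|N|^2$, and then invokes $|N|^2=64|\sigma|^2$ from Proposition~\ref{prop:N-reconstruction}. The lemma as stated, via \eqref{eq:complex-metric-compatibility}, already puts $\nabla_{\bar e_s}T$ in the conjugated slot, so the $\overline{T^j_{ik,s}}$ in your first display is only a slip, your fallback is unnecessary, and your $j\leftrightarrow s$ antisymmetrization is a slightly cleaner form of the paper's expansion using $\sum_{j,s}|T^s_{ik,\bar j}|^2=\sum_{j,s}|T^j_{ik,\bar s}|^2$.
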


\begin{proof}
In a local unitary frame, let
\[
A
=
\frac12
\sum_{i,k,j,s}
N_{ik\bar j\bar s}\,
\overline{\varphi^s}
\otimes
\varphi^i\wedge\varphi^k\otimes e_j.
\]
Thus
\[
(A_{\bar s})^j_{ik}
=
N_{ik\bar j\bar s},
\qquad
(\DivCh N)^j_{ik}
=
\sum_s(\nabla_{e_s}N)_{ik\bar j\bar s}.
\]
Applying Lemma~\ref{lem:balanced-ibp} to $A$ and $T$ gives
\begin{equation}\label{eq:N-divergence-integration}
\int_M\operatorname{Re}\ip{\DivCh N}{T}\,\dd V
=
-\int_M\operatorname{Re}
\sum_{i,k,j,s}
N_{ik\bar j\bar s}
\overline{T^j_{ik,\bar s}}
\,\dd V.
\end{equation}

For fixed $i,k$,
\[
N_{ik\bar j\bar s}
=
T^s_{ik,\bar j}
-
T^j_{ik,\bar s}.
\]
Moreover,
\[
\sum_{j,s}\left|T^s_{ik,\bar j}\right|^2
=
\sum_{j,s}\left|T^j_{ik,\bar s}\right|^2.
\]
Hence
\begin{align*}
2\operatorname{Re}\sum_{j,s}
N_{ik\bar j\bar s}\overline{T^j_{ik,\bar s}}
&=
2\operatorname{Re}\sum_{j,s}
T^s_{ik,\bar j}\overline{T^j_{ik,\bar s}}
-
2\sum_{j,s}\left|T^j_{ik,\bar s}\right|^2\\
&=
-\sum_{j,s}
\left|
T^s_{ik,\bar j}
-
T^j_{ik,\bar s}
\right|^2\\
&=
-\sum_{j,s}
\left|N_{ik\bar j\bar s}\right|^2.
\end{align*}
Substituting into \eqref{eq:N-divergence-integration},
\[
\int_M\operatorname{Re}\ip{\DivCh N}{T}\,\dd V
=
\frac12\int_M|N|^2\,\dd V
=
\frac12\norm{N}_{L^2}^2.
\]
If \(H_g^c\equiv c\), then \eqref{eq:N-norm} gives
\[
\frac12\norm{N}_{L^2}^2
=
32\norm{\sigma}_{L^2}^2.
\]
\end{proof}

Combining the preceding identities gives two square formulas.

\begin{theorem}\label{thm:squares}
Let $(M^3,J,g)$ be a compact balanced threefold with
$H_g^c\equiv c$. Then
\begin{align}
\norm{\nabla^{0,1}T}_{L^2}^2
&=
32\norm{\sigma}_{L^2}^2
-c\norm{T}_{L^2}^2,
\label{eq:square-01}\\
\norm{\nabla^{1,0}T}_{L^2}^2
&=
9c\norm{T}_{L^2}^2
-16\norm{\sigma}_{L^2}^2.
\label{eq:square-10}
\end{align}
\end{theorem}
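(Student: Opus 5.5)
The plan is to pair the Chern Laplacians $\mathcal U$ and $\mathcal V$ with $T$, integrate over $M$, and convert each pairing into an $L^2$ norm of a first derivative via Lemma~\ref{lem:balanced-ibp}. The two resulting identities are then combined with the algebraic inputs already established: \eqref{eq:holomorphic-laplacian-div}, Lemma~\ref{lem:curvature-action}, Lemma~\ref{lem:N-divergence}, Lemma~\ref{lem:CH-contraction}, and the integral identity \eqref{eq:sigma-Qt}.

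\textbf{First identity, for $\nabla^{0,1}T$.} Apply \eqref{eq:balanced-ibp} with $A=\nabla^{0,1}T$, so that $A_{\bar s}=\nabla_{\bar e_s}T$ and $(\nabla_{e_s}A)(\bar e_s)=\nabla^2_{e_s,\bar e_s}T$, and with $B=T$. This gives
\[
\int_M\operatorname{Re}\ip{\mathcal U}{T}\,\dd V=-\norm{\nabla^{0,1}T}_{L^2}^2 .
\]
Next use \eqref{eq:holomorphic-laplacian-div}, $\mathcal U=\mathcal R(T)-\DivCh N$. By Lemma~\ref{lem:curvature-action} and Lemma~\ref{lem:N-divergence},
\[
\int\operatorname{Re}\ip{\mathcal U}{T}=c\norm{T}^2-32\norm{\sigma}^2 .
\]
Comparing the two expressions yields \eqref{eq:square-01} directly.

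\textbf{Second identity, for $\nabla^{1,0}T$.} Apply \eqref{eq:balanced-ibp-conjugate} with $A=\nabla^{1,0}T$, so that $(\nabla_{\bar e_s}A)(e_s)=\nabla^2_{\bar e_s,e_s}T$. This gives
\[
\int\operatorname{Re}\ip{\mathcal V}{T}=-\norm{\nabla^{1,0}T}^2 .
\]
Write $\mathcal V=\mathcal U-(\mathcal U-\mathcal V)$ and use \eqref{eq:laplacian-contraction} to get
\[
-\norm{\nabla^{1,0}T}^2=c\norm{T}^2-32\norm{\sigma}^2+2c\norm{T}^2-12\int\ip{\sigma}{\mathcal Q}.
\]
Substituting \eqref{eq:sigma-Qt}, i.e.\ $\int\ip{\sigma}{\mathcal Q}=c\norm{T}^2-4\norm{\sigma}^2$, the right side becomes $-9c\norm{T}^2+16\norm{\sigma}^2$. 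This is \eqref{eq:square-10}.

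\textbf{Main obstacle.} The delicate point is bookkeeping rather than ideas. One must check that the index conventions in Lemma~\ref{lem:balanced-ibp} line up with $\mathcal U=\sum_s\nabla^2_{e_s,\bar e_s}T$ and $\mathcal V=\sum_s\nabla^2_{\bar e_s,e_s}T$, in particular that the "outer" derivative is the one being integrated by parts. One must also check that the factor $\tfrac12$ in the form-expansion of $T$ does not change the ordered-index norms \eqref{eq:norm-conventions}. I would verify this first in a Chern-normal frame at a point, exactly as in the proof of Lemma~\ref{lem:balanced-ibp}, before assembling the constants.
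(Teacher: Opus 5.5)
Your proposal is correct and follows the paper's proof essentially step for step. Both arguments integrate $\mathcal U$ and $\mathcal V$ against $T$ via Lemma~\ref{lem:balanced-ibp}, evaluate $\int_M\operatorname{Re}\ip{\mathcal U}{T}\,\dd V$ through $\mathcal U=\mathcal R(T)-\DivCh N$ together with Lemmas~\ref{lem:curvature-action} and~\ref{lem:N-divergence}, and obtain the second identity from \eqref{eq:laplacian-contraction} and \eqref{eq:sigma-Qt}. The only difference is cosmetic: you substitute $\mathcal V=\mathcal U-(\mathcal U-\mathcal V)$ directly, whereas the paper first forms $\norm{\nabla^{1,0}T}_{L^2}^2-\norm{\nabla^{0,1}T}_{L^2}^2=10c\norm{T}_{L^2}^2-48\norm{\sigma}_{L^2}^2$ and then adds \eqref{eq:square-01}.
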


\begin{proof}
In a local unitary frame, set
\[
A
=
\sum_s
\overline{\varphi^s}\otimes\nabla_{\bar e_s}T.
\]
Applying Lemma~\ref{lem:balanced-ibp} with $B=T$, and using the
definition of $\mathcal U$, gives
\begin{equation}\label{eq:laplacian-01-integration}
\begin{aligned}
\int_M\operatorname{Re}\ip{\mathcal U}{T}\,\dd V
&=
\int_M\operatorname{Re}
\sum_s
\ip{\nabla^2_{e_s,\bar e_s}T}{T}\,\dd V\\
&=
-\int_M
\sum_s\left|\nabla_{\bar e_s}T\right|^2\,\dd V\\
&=
-\norm{\nabla^{0,1}T}_{L^2}^2.
\end{aligned}
\end{equation}
Similarly, set
\[
A
=
\sum_s
\varphi^s\otimes\nabla_{e_s}T.
\]
The conjugate identity in Lemma~\ref{lem:balanced-ibp} gives
\begin{equation}\label{eq:laplacian-10-integration}
\int_M
\operatorname{Re}\ip{\mathcal V}{T}\,\dd V
=
-\norm{\nabla^{1,0}T}_{L^2}^2.
\end{equation}
Subtracting \eqref{eq:laplacian-10-integration} from
\eqref{eq:laplacian-01-integration}, we obtain
\begin{equation}\label{eq:laplacian-difference-integration}
\int_M
\operatorname{Re}\ip{\mathcal U-\mathcal V}{T}\,\dd V
=
\norm{\nabla^{1,0}T}_{L^2}^2
-
\norm{\nabla^{0,1}T}_{L^2}^2.
\end{equation}

By \eqref{eq:holomorphic-laplacian-div},
\eqref{eq:curvature-action}, \eqref{eq:N-divergence-general}, and
\eqref{eq:N-divergence},
\begin{align*}
-\norm{\nabla^{0,1}T}_{L^2}^2
&=
\int_M
\operatorname{Re}\ip{\mathcal U}{T}\,\dd V\\
&=
\int_M
\operatorname{Re}
\ip{\mathcal R(T)-\DivCh N}{T}\,\dd V\\
&=
\int_M
\operatorname{Re}\ip{\mathcal R(T)}{T}\,\dd V
-
\int_M
\operatorname{Re}\ip{\DivCh N}{T}\,\dd V\\
&=
c\norm{T}_{L^2}^2
-\frac12\norm{N}_{L^2}^2\\
&=
c\norm{T}_{L^2}^2
-32\norm{\sigma}_{L^2}^2.
\end{align*}
This proves \eqref{eq:square-01}.

Next, \eqref{eq:laplacian-difference-integration},
\eqref{eq:laplacian-contraction}, and \eqref{eq:sigma-Qt} give
\begin{align*}
\norm{\nabla^{1,0}T}_{L^2}^2
-
\norm{\nabla^{0,1}T}_{L^2}^2
&=
\int_M
\operatorname{Re}\ip{\mathcal U-\mathcal V}{T}\,\dd V\\
&=
-2c\norm{T}_{L^2}^2
+
12\int_M\ip{\sigma}{\mathcal Q}\,\dd V\\
&=
-2c\norm{T}_{L^2}^2
+
12\bigl(
c\norm{T}_{L^2}^2
-
4\norm{\sigma}_{L^2}^2
\bigr)\\
&=
10c\norm{T}_{L^2}^2
-
48\norm{\sigma}_{L^2}^2.
\end{align*}
Combining this with \eqref{eq:square-01}, we obtain
\begin{align*}
\norm{\nabla^{1,0}T}_{L^2}^2
&=
32\norm{\sigma}_{L^2}^2
-c\norm{T}_{L^2}^2
+10c\norm{T}_{L^2}^2
-48\norm{\sigma}_{L^2}^2\\
&=
9c\norm{T}_{L^2}^2
-16\norm{\sigma}_{L^2}^2.
\end{align*}
This proves \eqref{eq:square-10}.
\end{proof}

\subsection{Proof of the balanced theorem}\label{subsec:balanced-proof}

\begin{proof}[Proof of Theorem~\ref{thm:balanced-main}]
Suppose first that $c=0$.  Equation \eqref{eq:square-10} becomes
\[
  \norm{\nabla^{1,0}T}_{L^2}^2
  =-16\norm{\sigma}_{L^2}^2.
\]
Since the left-hand side is nonnegative and the right-hand side is
nonpositive, both sides vanish.  Thus \(\sigma=0\) and
\(\nabla^{1,0}T=0\).  Equation \eqref{eq:square-01} gives
\(\nabla^{0,1}T=0\), and Proposition~\ref{prop:N-reconstruction} gives
\(N=0\).  Substitution in \eqref{eq:full-reconstruction} gives
\(R^c=0\).  This does not imply \(T=0\); the metric is Chern flat but
need not be K\"ahler.

Now suppose that $c<0$.  From \eqref{eq:square-10},
\[
  \norm{\nabla^{1,0}T}_{L^2}^2
  =9c\norm{T}_{L^2}^2-16\norm{\sigma}_{L^2}^2.
\]
The left-hand side is nonnegative and both terms on the right-hand side
are nonpositive.  Hence \(T=0\) and \(\sigma=0\).  The vanishing of the
Chern torsion makes \(g\) K\"ahler.  Proposition~\ref{prop:N-reconstruction}
also gives \(N=0\), and \eqref{eq:full-reconstruction} becomes
\[
 R^c_{i\bar j k\bar\ell}
 =\frac c2\bigl(
   \delta_{ij}\delta_{k\ell}+\delta_{i\ell}\delta_{kj}
 \bigr).
\]
\end{proof}

The same identities also show what this method yields in the positive
case.

\begin{corollary}\label{cor:positive-bounds}
Let \((M^3,J,g)\) be a compact balanced threefold.  If
\(H_g^c\equiv c>0\), then
\begin{equation}\label{eq:positive-bounds}
  \frac{c}{32}\norm{T}_{L^2}^2
  \le \norm{\sigma}_{L^2}^2
  \le \frac{9c}{16}\norm{T}_{L^2}^2.
\end{equation}
In particular, these square identities alone do not exclude a non-K\"ahler metric when $c>0$.
\end{corollary}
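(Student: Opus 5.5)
The plan is to read both inequalities in \eqref{eq:positive-bounds} directly off the two square identities of Theorem~\ref{thm:squares}, using only that their left-hand sides are nonnegative. No further geometric input should be needed. Compactness and balancedness enter only through Theorem~\ref{thm:squares} itself.

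For the lower bound, I will use \eqref{eq:square-01}. Since \(\norm{\nabla^{0,1}T}_{L^2}^2\ge0\), it gives
\[
 32\norm{\sigma}_{L^2}^2-c\norm{T}_{L^2}^2\ge0,
\]
that is, \(\norm{\sigma}_{L^2}^2\ge\frac{c}{32}\norm{T}_{L^2}^2\).

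For the upper bound, I will use \eqref{eq:square-10}. Since \(\norm{\nabla^{1,0}T}_{L^2}^2\ge0\), it gives
\[
 9c\norm{T}_{L^2}^2-16\norm{\sigma}_{L^2}^2\ge0,
\]
that is, \(\norm{\sigma}_{L^2}^2\le\frac{9c}{16}\norm{T}_{L^2}^2\). As a consistency check, the interval is nonempty whenever \(c>0\), because \(\frac{1}{32}<\frac{9}{16}\).

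For the ``in particular'' remark, I would point out that when \(c>0\) both right-hand sides in Theorem~\ref{thm:squares} can be positive simultaneously, as long as \(\norm{\sigma}_{L^2}^2\) lies strictly inside the interval above. So, unlike the case \(c\le0\), these identities do not force \(T=0\). Concretely, for any prescribed \(\norm{T}_{L^2}^2>0\), the values \(\norm{\sigma}_{L^2}^2\in[\frac{c}{32}\norm{T}_{L^2}^2,\frac{9c}{16}\norm{T}_{L^2}^2]\) make both prescribed values of \(\norm{\nabla^{0,1}T}_{L^2}^2\) and \(\norm{\nabla^{1,0}T}_{L^2}^2\) nonnegative. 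Hence no contradiction arises from the square identities alone.

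This is an informal remark about the limits of the method, not a claim that such metrics exist. There is no real obstacle here; the only care needed is to phrase the last sentence as a statement about what the two identities fail to rule out.
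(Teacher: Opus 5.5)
Your proposal is correct and matches the paper's proof: both bounds come from the nonnegativity of $\norm{\nabla^{0,1}T}_{L^2}^2$ and $\norm{\nabla^{1,0}T}_{L^2}^2$ in \eqref{eq:square-01} and \eqref{eq:square-10}. You also read the final sentence as a remark about the limits of the square identities rather than an existence claim, which is how the paper intends it.
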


\begin{proof}
The two inequalities follow from the nonnegativity of the left-hand sides of \eqref{eq:square-01} and \eqref{eq:square-10}.
\end{proof}

\noindent\textbf{Acknowledgments.} The authors would like to express their sincere gratitude to Professors Xiaolan Nie and Fangyang Zheng for their insightful guidance, valuable discussions. They also thank Yijian Zhang and Shiyu Zhang for helpful
discussions and valuable suggestions concerning the balanced case.

%

\end{document}